\newcommand{\D}{\mathcal{D}}
\newcommand{\F}{\mathcal{F}}
\newcommand{\X}{\mathcal{X}}
\newcommand{\cS}{\mathcal{S}}
\newcommand{\cH}{\mathcal{H}}
\newcommand{\cN}{\mathcal{N}}
\newcommand{\C}{\mathbb{C}}
\newcommand{\N}{\mathbb{N}}
\newcommand{\R}{\mathbb{R}}
\newcommand{\Z}{\mathbb{Z}}
\newcommand{\bE}{\mathbb{E}}
\newcommand{\bM}{\mathbb{M}}
\newcommand{\bK}{\mathbb{K}}
\newcommand{\bG}{\mathbb{G}}
\newcommand{\bH}{\mathbb{H}}
\newcommand{\bI}{\mathbb{I}}
\newcommand{\sI}{\mathscr{I}}
\newcommand{\sX}{\mathscr{X}}
\newcommand{\sg}{\mathfrak{s}}
\newcommand{\Soli}{\mathscr{S}}
\newcommand{\SBC}{\operatorname{SBC}}
\newcommand{\Stz}{\operatorname{Stz}}
\newcommand{\ST}{\mathfrak{st}}
\newcommand{\ml}[1]{\lceil #1 \rfloor}
\newcommand{\al}{\alpha}
\newcommand{\be}{\beta}
\newcommand{\ga}{\gamma}
\newcommand{\de}{\delta}
\newcommand{\e}{\varepsilon}
\newcommand{\fy}{\varphi}
\newcommand{\om}{\omega}
\newcommand{\la}{\lambda}
\newcommand{\te}{\theta}
\newcommand{\s}{\sigma}
\newcommand{\ta}{\tau}
\newcommand{\ka}{\kappa}
\newcommand{\x}{\xi}
\newcommand{\y}{\eta}
\newcommand{\z}{\zeta}
\newcommand{\De}{\Delta}
\newcommand{\Om}{\Omega}
\newcommand{\Ga}{\Gamma}
\newcommand{\La}{\Lambda}
\newcommand{\p}{\partial}
\newcommand{\na}{\nabla}
\newcommand{\Cu}{\bigcup}
\newcommand{\Ca}{\bigcap}
\newcommand{\re}{\mathop{\mathrm{Re}}}
\newcommand{\weak}{\operatorname{w-}}
\newcommand{\weakto}{\rightharpoonup}
\newcommand{\sign}{\operatorname{sign}}
\newcommand{\lec}{\lesssim}
\newcommand{\gec}{\gtrsim}
\newcommand{\IN}[1]{\text{ in }#1}
\newcommand{\etc}{,\ldots,}
\newcommand{\I}{\infty}
\newcommand{\follows}{\Longleftarrow}
\newcommand{\ti}{\widetilde}
\newcommand{\ba}{\overline}
\newcommand{\U}{\underline}
\newcommand{\LR}[1]{{\langle #1 \rangle}}
\newcommand{\Lim}{\lim\limits}
\newcommand{\diff}[1]{{\triangleleft #1}}
\newcommand{\pa}{\triangleright}
\newcommand{\EQ}[1]{\begin{equation}\begin{split} #1 \end{split}\end{equation}}
\newcommand{\BR}[1]{\left\{#1\right\}}
\newcommand{\tand}{\ \text{ and }\ }
\newcommand{\Del}[1]{}
\newcommand{\CAS}[1]{\begin{cases} #1 \end{cases}}
\newcommand{\pt}{&}
\newcommand{\pr}{\\ &}
\newcommand{\pq}{\quad}
\newcommand{\pn}{}
\newcommand{\prq}{\\ &\quad}
\newcommand{\prQ}{\\ &\qquad}
\numberwithin{equation}{section}
\newtheorem{thm}{Theorem}[section]
\newtheorem{lem}[thm]{Lemma}
\newtheorem{prop}[thm]{Proposition}
\theoremstyle{remark}
\newtheorem{rem}{Remark}
\newcommand{\GS}{\operatorname{GS}}
\newcommand{\II}{_{(\I)}}
\newcommand{\pp}{\mathfrak{p}}
\newcommand{\sE}{\mathscr{E}}
\newcommand{\Sup}{\operatorname{Sup}}
\begin{document}

\title[Dynamics for NLS with a potential]{Global dynamics below excited solitons \\ for the nonlinear Schr\"odinger equation \\ with a potential}
\begin{abstract}
Consider the nonlinear Schr\"odinger equation (NLS) with a potential with a single negative eigenvalue. 
It has solitons with negative small energy, which are asymptotically stable, and, if the nonlinearity is focusing, then also solitons with positive large energy, which are unstable. 
In this paper we classify the global dynamics below the second lowest energy of solitons under small mass and radial symmetry constraints. 
\end{abstract}

\author[K.~Nakanishi]{Kenji Nakanishi}

\address{Department of Pure and Applied Mathematics
Graduate School of Information Science and Technology
Osaka University, Suita, Osaka 565-0871, JAPAN}

\email{nakanishi@ist.osaka-u.ac.jp}

\subjclass[2010]{35Q55} \keywords{Nonlinear Schr\"odinger equation, Scattering theory, Solitons, Blow-up}

\maketitle

\tableofcontents

\section{Introduction}
\subsection{Background and motivation}
Nonlinear dispersive equations have solutions with various types of behavior in time, typically {\it scattering} (globally dispersive), {\it blow-up}, and solitary waves, i.e., {\it solitons}. 
In the recent years, especially since the work of Kenig and Merle \cite{KM}, global dynamics leading to those different types have been revealed among large general solutions, so that one can partially predict evolution of each solution from the initial data. 
Kenig and Merle \cite{KM} studied  the energy-critical NLS 
\EQ{
 i\dot u - \De u = |u|^4u, \pq u(t,x):\R^{1+3}\to\C,}
and proved that all solutions with energy less than the ground state $W$
\EQ{
 \pt E(u)=\int_{\R^3}\frac{|\na u|^2}{2}-\frac{|u|^6}{6}dx<E(W),
 \pr W(x):=(1+|x|^2/3)^{-1/2},\pq -\De W = W^5,}
either scatter or blow-up, and that the two types of behavior are distinguished by some explicit functionals of the initial data. For example, 
\EQ{
 K(u(0))=\int_{\R^3}|\na u(0)|^2-|u(0)|^6dx \CAS{\ge 0 \implies \text{scattering,} \\ <0 \implies \text{blow-up.}} }
The distinction is essentially the same as that in the classical result for the nonlinear Klein-Gordon equation by Payne and Sattinger \cite{PS} into global existence vs.~blow-up, but the crucial aspect of Kenig-Merle's work is to reveal and exploit the global dispersion in the scattering part. 
It was extended to the threshold energy $E(u)\le E(W)$ by Duyckaerts and Merle \cite{DM}, and then slightly above the ground state by Schlag and the author \cite{NSK}, for the nonlinear Klein-Gordon equation
\EQ{ \label{NLKG}
 \pt \ddot u - \De u + u = u^3, \pq u(t,x):\R^{1+3}\to\R,
 \pr E(u)=\int_{\R^3}\frac{|\dot u|^2+|\na u|^2+|u|^2}{2}-\frac{|u|^4}{4}dx<E(Q)+\e^2,}
where $Q\in H^2(\R^3)$ is the unique positive radial solution or the ground state of 
\EQ{ \label{eq Q}
  -\De Q+Q=Q^3.}
The types of behavior in that case are separated into 9 sets of solutions by center-stable and center-unstable manifolds of the ground state, and the mechanism of transition between scattering and blow-up is revealed. 
Furthermore, Duyckaerts, Kenig and Merle \cite{DKM} established a complete classification of asymptotic behavior of solutions, for the energy-critical wave equation
\EQ{
 \ddot u - \De u = u^5, \pq u(t,x):\R^{1+3}\to\R,}
in terms of resolution into solitons (i.e.~rescaled $W$), without any size restriction on the initial data. All of these works have been extended to several equations and settings including the above examples, except the soliton resolution which is yet limited to variants of energy-critical wave equations. 

General dynamics are, however, far more complicated for more general or physical equations. In particular, many equations, especially of the NLS type, have many solitons, differing in shape, energy, stability, etc. Heuristically, unstable solitons are expected to collapse into stable solitons, radiating dispersive waves. 
For small solitons of the NLS with a decaying potential $V(x)$, Tsai and Yau \cite{TY1,TY2,TY3,TY4} first proved such a phenomenon, as well as asymptotic stability of the ground soliton, in the case $-\De+V$ has two well-positioned negative eigenvalues. Since then, there have been intensive studies (cf.~\cite{TS,SW,GS,NPT,CM}) on global behavior of small solutions including many solitons, but very little is rigorously known about dynamical relation between solitons which are neither close nor similar to each other. It seems hard in such cases to construct or control solutions in a precise way along some anticipated evolution. A more natural strategy is to deal altogether with general solutions including or at least close to those solitons, with less precise information on individual trajectories. 

\subsection{Setting and the main result}
As a first step toward the above problem, we consider the NLS with a potential 
\EQ{ \label{NLSP}
 \pt i\dot u + H u = \sg|u|^2u, \pq H:=-\De+V, \pq \sg=\pm,
 \prq u(t,x):\R^{1+3}\to\C, \pq V(x):\R^3\to\R,}
in the simplest non-trivial setting, namely the case with the unique eigenvalue 
\EQ{ \label{def phi0}
 \pt H\phi_0 = e_0\phi_0,\pq e_0<0, \pq 0<\phi_0\in H^2(\R^3), \pq \|\phi_0\|_{L^2(\R^3)}=1,}
with $spec(H|_{\phi_0^\perp})=[0,\I)$ absolutely continuous, and the radial symmetry restriction 
\EQ{
 u(t,x)=u(t,|x|), \pq V(x)=V(|x|).}
Hence the initial data set is the radial subspace of the Sobolev space
\EQ{ \label{def H1r}
 H^1_r(\R^3):=\{u\in L^2(\R^3)\mid \na u\in L^2(\R^3),\ u(x)=u(|x|)\}.}
The nonlinearity can be either defocusing $\sg=-$ or focusing $\sg=+$. 
In the focusing case $\sg=+$, the above equation is one of the simplest equations with both stable and unstable solitons, where the former is small and the latter is large. 
The goal of this study is a complete description of global dynamics in a fairly large solution space, containing both the stable and the unstable solitons. 
In this paper, we consider the region of small mass and an upper energy constraint which eliminates the unstable solitons. An implication of the main result is that if an unstable (large) soliton with small mass and the second largest energy is perturbed to decrease its energy and the mass, then it either blows up or collapses into a (small) ground state soliton, radiating most of the energy (which is large) into a dispersive wave. The two types of behavior is distinguished by a functional of the initial data, similarly to Kenig-Merle or Payne-Sattinger. 

In order to state the main result, we need a few more assumptions on $V$. A simple sufficient condition is that $V$ is in the Schwartz class and $H$ has no resonance: 
\EQ{
 \dot H^1(\R^3)\ni\fy,\pq (-\De+V)\fy=0 \implies \fy=0.}
The existence of small solitons is well known in the above setting. 
The function $u(t,x)=e^{-it\om}\fy(x)$ is a solution of \eqref{NLSP} iff 
\EQ{ \label{sNLSP}
 (H+\om)\fy=\sg|\fy|^2\fy.}
In this paper we call a solution of \eqref{sNLSP} a soliton, denoting the set of solitons by 
\EQ{ \label{def Soli}
 \Soli:=\{\fy\in H^1_r(\R^3) \mid \exists\om\in\R\ s.t.\ \eqref{sNLSP}\}} 
and the energy (Hamiltonian) and the mass (charge) by 
\EQ{ \label{def EM}
 \bE(u):=\int_{\R^3} \frac{|\na u|^2+V|u|^2}{2}-\frac{\sg|u|^4}{4}dx,
 \pq \bM(u):=\int_{\R^3} \frac{|u|^2}{2}dx,}
which are continuous on $H^1(\R^3)$ and conserved for \eqref{NLSP}. 
For each fixed mass $\bM(\fy)=\mu>0$, we can define the energy levels of solitons by induction on $j=0,1,2\etc$
\EQ{ \label{def Ej}
 \sE_j(\mu):=\inf\{\bE(\fy)\mid \fy\in\Soli,\ \bM(\fy)=\mu,\ \bE(\fy)>\sE_{j-1}(\mu)\},}
where $\sE_{-1}(\mu):=-\I$ and $\inf\emptyset:=\I$, then classify the solitons 
\EQ{ \label{def Solij}
 \Soli_j:=\{\fy\in\Soli \mid \bE(\fy)=\sE_j(\bM(\fy))\}.}
$\Soli_0$ is the set of least energy solitons, namely {\it the ground states}, while $\Soli_j$ is the $j$-th {\it excited state} for $j\ge 1$. 
In this paper, we are concerned only with $\Soli_0$ and $\Soli_1$. 

It is easy to observe that the ground states for small mass are bifurcation from $0$ generated by the linear ground state $\phi_0$ in \eqref{def phi0}. 
More precisely, there exists $0<z_b\ll 1$ and a $C^1$ map 
\EQ{ 
 (\Phi,\Om):D_b:=\{z\in\C\mid |z|^2<2z_b\} \to H^1_r(\R^3)\times\R}
such that $(\fy,\om)=(\Phi[z],\Om[z])$ solves \eqref{sNLSP} for each $z\in D_b$ and 
\EQ{
 \Phi[z] = z\phi_0 + \ga, \pq \ga \perp \phi_0, \pq \|\ga\|_{H^1}\lec|z|^3.} 
See \cite{gnt} for a proof in a more general setting. 
We can prove that $\Phi(D_b)=\Soli_0$ under the small mass constraint $\bM<z_b^2$, while the first excited energy satisfies 
\EQ{
 \sE_1(\mu)=\CAS{\bE^0(Q)\bM(Q)\mu^{-1}(1+o(1))  &(\sg=+) \\ \I &(\sg=-),}}
as $\mu\to 0$, where $\bE^0$ denotes the energy without the potential, namely
\EQ{ \label{def E0}
 \bE^0(\fy):=\int_{\R^3}\frac{|\na u|^2}{2}-\sg\frac{|u|^4}{4}dx.}
In fact, in the defocusing case $\sg=-$, the soliton \eqref{sNLSP} is unique for each fixed $\bM(\fy)=\mu>0$ modulo the gauge symmetry $e^{i\te}$. 
In the focusing case $\sg=+$, the first excited states are generated by scaling of $Q$ 
\EQ{
 \Soli_1\ni\fy=\om^{1/2}(Q+o(1))(\om^{1/2}x) \pq(\bM(\fy)\to 0)}
We do not need the above characterizations of $\Soli_1$, but the variational property with respect to the virial-type functional
\EQ{ \label{def K2}
  \bK_2(u):=\int_{\R^3}|\na u|^2-\frac{rV_r|u|^2}{2}-\sg\frac{3|u|^4}{4}dx=\p_{\al=1}\bE(\al^{3/2}u(\al x))}
plays a crucial role as in the case $V=0$. Henceforth $\p_{\al=a}$ denotes the partial derivative with respect to $\al$ at $\al=a$, namely 
\EQ{
 \p_{\al=a}f :=\lim_{\e\to 0}\frac{f|_{\al=a+\e}-f|_{\al=a}}{\e}.}
The following is the main result of this paper.
\begin{thm} \label{main} 
There exists $0<\mu_\star\ll 1$ such that for any $u(0)\in H^1_r(\R^3)$ satisfying $\bM(u(0))\le\mu_\star$ and $\bE(u(0))<\sE_1(\bM(u(0)))$, the corresponding solution of \eqref{NLSP} either blows up in finite time both in $t>0$ and in $t<0$, or scatters as $t\to\pm\I$ to the ground states $\Soli_0$. More precisely, in the former case, there are $T_\pm\in(0,\I)$ such that the unique solution $u\in C((-T_-,T_+);H^1_r)$ exists and 
\EQ{
 \lim_{t\to\pm(T_\pm-0)}\|\na u(t)\|_{L^2(\R^3)}=\I=\limsup_{t\to\pm(T_\pm-0)}\|u(t)\|_{L^\I(\R^3)}.}
In the latter case, there are a $C^1$ function $z:\R\to D_b\subset\C$ and $u_\pm\in H^1_r(\R^3)$ such that $|z(t)|$ converges as $t\to\pm\I$ and 
\EQ{ \label{scatt to S_0}
 \lim_{t\to\pm\I}\|u(t)-\Phi[z(t)]-e^{-it\De}u_\pm\|_{H^1(\R^3)}=0.}
Moreover, the blow up occurs if and only if 
\EQ{ \label{bup cond}
 \sg=+, \pq \|\na u(0)\|_{L^2(\R^3)}>1, \text{ and } \bK_2(u(0))<0,}
which persists in $t$ as long as the solution $u$ exists. 
\end{thm}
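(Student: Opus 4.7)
The plan is to follow the Kenig--Merle concentration-compactness and rigidity scheme, adapted to the presence of the discrete spectral mode, combined with a Payne--Sattinger-type variational dichotomy driven by $\bK_2$ and the Gustafson--Nakanishi--Tsai modulation around $\Soli_0$. The absence of scaling and translation symmetries (only the gauge $U(1)$ survives because of $V$) will be a convenient simplification in several steps, but the interaction between the bound-state coordinate $z(t)$ and the radiation will be the main analytic difficulty.

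First I would establish the variational splitting. Using that $\sE_1(\mu)$ is, by definition, the lowest critical value of $\bE$ strictly above $\sE_0(\mu)$ on the mass-$\mu$ sphere, a minimax argument shows that on $\{u\in H^1_r : \bE(u)<\sE_1(\bM(u)),\ \bM(u)\le\mu_\star\}$ the functional $\bK_2$ cannot vanish except near $\Soli_0$ (where $\bK_2>0$ for $\sg=+$ under the small mass assumption, since ground states are $L^2$-bifurcations). Combined with a continuity argument along the flow this produces two disjoint invariant regions $\cP_\pm$, where $\cP_+$ is characterized by $\bK_2\ge 0$ (or $\sg=-$, or $\|\na u\|_{L^2}\le 1$) and $\cP_-$ by \eqref{bup cond}. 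On $\cP_+$ the coercivity $\bK_2\gec \|\na u\|_{L^2}^2 - C\|u\|_{L^2}^2$ provides an a priori $H^1$ bound; on $\cP_-$ there is a uniform gap $\bK_2\le-\de<0$.

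Next, for $\cP_-$ I would run a localized Glassey-type virial with a radial cutoff $\chi_R(|x|^2)$: since $u$ is radial, $\partial_t^2\int\chi_R|u|^2\,dx = 8\bK_2(u)+\mathrm{err}(R)$, and the error, which collects $rV_r$ contributions and tail terms, is absorbed for $R$ large using $\bM\le\mu_\star$ and the $H^1$ control available until the blow-up time. Together with the uniform gap on $\cP_-$, this yields finite-time blow-up in both time directions and the claimed $H^1$/$L^\I$ blow-up characterization.

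For $\cP_+$ I would use the $C^1$ modulation $u(t)=\Phi[z(t)]+\eta(t)$ with $\eta$ in the continuous spectral subspace of $H$, derive the coupled ODE/PDE system for $(z,\eta)$, and set up the Strichartz theory for $e^{itH}P_c$ (available under the non-resonance assumption on $V$). The two nontrivial objectives are then: (i) convergence of $|z(t)|$ and scattering of $\eta$ in the perturbative regime near $\Soli_0$, via Fermi Golden Rule damping of $|z|^2-|z_\I|^2$ as in \cite{gnt}; and (ii) ruling out non-perturbative non-scattering on $\cP_+$. For (ii) I would assume failure of scattering and extract, via a profile decomposition for $H^1_r(\R^3)$-radial data with a potential (only the gauge and asymptotic times profile-parameters survive, no spatial translation or scaling), a critical element $u_\star$ with pre-compact orbit modulo $e^{i\te}$ in $H^1_r$, satisfying $\bE(u_\star)<\sE_1(\bM(u_\star))$ and $u_\star\in\cP_+$ for all $t$.

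The hard part will be the rigidity step. The pre-compact orbit of $u_\star$ modulo gauge, radiality, and the strict coercivity $\bK_2(u_\star(t))\ge\de>0$ on $\cP_+$ away from a neighborhood of $\Soli_0$ feed into a truncated virial identity whose error is controlled uniformly in time by compactness, forcing $u_\star$ to collapse onto $\Soli_0$. Combined with the modulation analysis and the asymptotic stability of ground states in the small-mass regime (which already covers a neighborhood of $\Soli_0$ in $\cP_+$), this contradicts the non-scattering hypothesis on $u_\star$ and closes the argument: every solution in $\cP_+$ decomposes as in \eqref{scatt to S_0} with $|z(t)|$ convergent and a free Schr\"odinger asymptotic state $u_\pm$. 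The persistence of \eqref{bup cond} is then a direct consequence of the invariance of $\cP_-$ established in the first step.
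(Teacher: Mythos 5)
Your overall framework (variational dichotomy from $\bK_2$ and the small-mass dichotomy, localized virial for blow-up, modulation plus Kenig--Merle concentration-compactness and a rigidity argument for scattering) is indeed the paper's skeleton, and the blow-up branch of your plan is essentially what is done (via the Ogawa--Tsutsumi truncated virial together with the uniform gap on $\bK_2$). But there is a genuine gap in the scattering branch, precisely at the profile decomposition step, and it is the one the paper is built to overcome.

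You propose to run ``a profile decomposition for $H^1_r$ radial data with a potential'' and then extract a critical element. The difficulty you gloss over is that after the modulation $u=\Phi[z]+\eta$, the radiation $\eta$ does \emph{not} solve a perturbation of $i\dot\eta+H\eta=0$ by a purely nonlinear term: the coupling $B[z(t)]\eta$ coming from the soliton is \emph{linear} in $\eta$, with a coefficient $|z(t)|^2$ that is small but does not decay in $t$, so it cannot be absorbed as a nonlinear error (that would require $L^2_t$-smallness of the remainder, which fails for $H^1$ data). Hence the reference linear flow for the profile decomposition must be the time-dependent linearized equation $i\dot\x+H\x=B[z(t)]\x$, whose potential depends on the specific solution in the minimizing sequence, and one needs a whole machinery (the $\SBC$ class of modulation parameters, the propagators $u[z,s]$, the semi-norms $\|\cdot\|_{[z;T_0,T_1;T_2]}$, Lemma \ref{lem:Stzlim} giving uniform Strichartz control for limits of such sequences) to make orthogonality and the energy Pythagoras identity survive. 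Your sketch, which treats the decomposition as a standard $e^{itH}$ profile decomposition with only gauge and time translation, would break at exactly this point. A second, related gap: the nonlinear profiles moving to $t\to\pm\I$ cannot be defined here by wave operators (as in Kenig--Merle) because the final-state problem near $\Soli_0$ is not known to have uniqueness or Lipschitz dependence; the paper instead defines them as weak limits along time sequences and proves afterward (Lemma \ref{lem:Nprofile}(iv)) that the linear profile is the scattering state, which forces a step-by-step construction over the profiles rather than a one-stroke global approximation. Your proposal is silent on both of these points, and without them the concentration-compactness step does not close. A minor inaccuracy: the convergence of $|z(t)|$ in the perturbative regime near $\Soli_0$ in \cite{gnt} is not a Fermi Golden Rule mechanism (there is only one eigenvalue, hence no discrete mode to resonate into the continuum); it comes from a monotonicity/Lyapunov argument combined with endpoint Strichartz.
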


The above theorem contains the asymptotic stability of the ground state $\Soli_0$ for small $H^1$ radial solutions. This part is contained in the asymptotic stability for small solutions in \cite{gnt} by Gustafson, Tsai and the author, which does not need the radial symmetry restriction. 

If the potential $V=0$, then there is no small soliton such as $\Soli_0$, but the ground state $Q$ as in \eqref{NLKG} exists and unstable. 
In that case, the above result regarding $\Soli_0=\{0\}$ and $\Soli_1=\{\al Q(\al x)\}_{\al>0}$ was obtained by Holmer and Roudenko \cite{HR}, extended to the non-radial case by Duyckaerts, Holmer and Roudenko \cite{DHR}, to the threshold energy by Duyckaerts and Roudenko \cite{DR}, and slightly above the threshold (in the radial case) by Schlag and the author \cite{NSS}. 
In these works there is no small-mass constraint as above, but it is not an essential difference, because the scale invariance in the case $V=0$ allows one freely to add or remove such a restriction. 

\subsection{Difficulties and ideas in the proof} \label{ss:diff}
The proof follows the strategy of Kenig and Merle \cite{KM}, which consists of a stationary part based on the classical variational argument for the elliptic equation \eqref{sNLSP}, and a dynamical (or scattering) part based on the variational argument in space-time: the profile decomposition by Bahouri and G\'erard \cite{BG}. 

The problem caused by the potential in the stationary variational argument can be read immediately from the virial identity 
\EQ{
 \p_t\LR{i\dot u|x\cdot\na u}=-2\bK_2(u).}
In the absence of $V$, the functional $\bK_2$ can not vanish under the energy constraint except at $0$, and so sign-definite along each trajectory. This leads to monotonicity in the virial identity, which has been the crucial starting point for $V=0$, including the case slightly above the ground state \cite{NSK}, where possible change of $\sign\bK_2$ was controlled by using the linearized operator around $Q$. 

In the presence of $V$, the functional $\bK_2$ changes the sign around the ground solitons $\Soli_0$. Note that this problem does not arise in the elliptic equation \eqref{sNLSP} using the Nehari functional 
\EQ{
 \p_{\al=1}(\bE+\om\bM)(\al u)=\int_{\R^3}|\na u|^2+(V+\om)|u|^2-|u|^4dx,}
because the excited states $\Soli_1$ can be distinguished from the ground states $\Soli_0$ by the time frequency $\om$. Indeed, $\om\to-e_0$ on $\Soli_0$ while $\om\to\I$ on $\Soli_1$ as $\bM\to 0$. 
In contrast, the virial functional $\bK_2$ is independent of $\om$, since it is derived by the $L^2$-preserving dilation. 

The above problem in the virial identity is however easily solved using the fact that the disturbance of $\sign\bK_2$ occurs only in a small neighborhood of $H^1(\R^3)$, where we have the asymptotic stability of $\Soli_0$ from \cite{gnt}. 
In fact, the region $\bK_2\lec \bM\ll 1$ splits into two sets far from each other in $H^1(\R^3)$: one around $0$ satisfying 
\EQ{
 \|\na\fy\|_{L^2(\R^3)}^2+\|\fy\|_{L^4(\R^3)}^2 \lec \bM(\fy) \ll 1,}
and the other with large energy satisfying 
\EQ{
 \min(\|\na\fy\|_{L^2(\R^3)}^2,\|\fy\|_{L^4(\R^3)}^4) \gec \bM(\fy)^{-1} \gg 1.}See Lemma \ref{lem:SMD} for a more general statement with a proof. 
In \eqref{bup cond}, the condition $\|\na u\|_{L^2}>1$ is imposed only to distinguish the above two cases, so there are many alternative conditions, such as $\|u\|_{L^4}>1$. 

The problems in the space-time variational argument, caused by the potential, or more precisely by the stable solitons $\Soli_0$, appear more fundamental. 
First, we should obviously remove the stable soliton part from the solution to apply the profile decomposition, as it aims at global dispersion or space-time integrability of the solutions. 
Second, the linear terms of the dispersive part, namely the interaction with the small soliton, can not be treated as part of the nonlinear perturbation, since it would require smallness in $L^2_t$ of the remainder of the profile decomposition, which is impossible as long as we take the initial data from the $L^2_x$ Sobolev space. 

Therefore, we have to consider the linearized equation around the small soliton as the reference equation in the profile decomposition for the dispersive component. 
Since the modulation in time, namely $z(t)$ in \eqref{scatt to S_0}, depends on the solution, it means that we have to consider a sequence of equations corresponding to the sequence of solutions to which we apply the concentration compactness. 

Another problem is that we have very poor control on the global or asymptotic behavior of $z(t)$. 
For example, the convergence of $|z(t)|$ as $t\to\I$ becomes arbitrarily slow by choosing small $H^1$ data, see \cite[Theorem 1.9]{gnt}. 
This causes difficulties at least in the following two places. 

First, the nonlinear profile decomposition is a method to approximate solutions globally in time, but we can not do it for the soliton part $z(t)$. 
Therefore we have to distinguish time into two regimes: around and away from the profiles, approximating $z$ only in the former, while relying on the smallness of the dispersive component in the latter. 

Second, the nonlinear profiles moving to $t\to\pm\I$ were defined in Kenig-Merle \cite{KM} by the wave operator, i.e., solving the final state problem with the linear profile as the scattering state. 
The existence of solution to the final state problem in the current setting, namely around the ground states $\Soli_0$, was proved in \cite{gnt}, but we do not even know the uniqueness, while we would need some continuity estimate. 
Hence we have to define the nonlinear profiles in another way, that is the weak limit along a time sequence, proving afterward that the linear profile is the scattering state. 
The drawback of this definition is that we can not construct global approximation at one stroke as in Kenig-Merle, but have to proceed step by step over each profile. 
The approach in this paper can be roughly regarded as a hybrid between Bahouri-G\'erard \cite{BG} and Kenig-Merle \cite{KM}. The former used the scattering to describe the limit of sequence of solutions, while the latter used the limit of sequence to obtain the scattering. We need to proceed from both the sides. 

Yet another complication in the estimates is due to the quadratic nonlinearity in the equation after linearization, to which we can not directly apply the Strichartz estimate to obtain Lipschitz estimate in the energy space for global perturbation, together with the smallness of the remainder in the profile decomposition. 
To solve this problem, we follow the idea in \cite{CNLKG}, using non-admissible Strichartz norms and measuring the initial data by the Strichartz norm. 
Such estimates are derived for the linearized equation, treating the time dependent potential by the double endpoint Strichartz estimate as in \cite{gnt}, but it requires the non-admissible version, obtained independently by Foschi \cite{F} and by Vilela \cite{V}. 

Extension of the result in this paper to the lower space dimensions would require similar modification to the argument by Mizumachi \cite{M1,M2}, who extended the small data result of \cite{gnt} by replacing the endpoint Strichartz estimate with Kato's weighted $L^2$ space-time estimate. 
Apart from that issue, it should be rather straightforward to extend it to general space dimensions and general power nonlinearity between the mass and the energy critical exponents, namely 
\EQ{
 \pt i\dot u + H u = \sg|u|^\al u, \pq u(t,x):\R^{1+d}\to\C, \pq \frac{4}{d}<\al<\frac{4}{d-2},}
even though the 3D-cubic setting is exploited for minor simplification in several places of this paper. 

\subsection{Notation} \label{ss:nota}
$L^p_x$, $B^s_{p,q}$, and $H^s_p$ denote respectively the standard Lebesgue, inhomogeneous Besov, and inhomogeneous Sobolev spaces on $\R^3$. 
The $L^2$ based Sobolev space is denoted by $H^s=H^s_2$. 
The $L^p_x$ norm is often denoted by $\|\cdot\|_p$. For any function space $X$ on $\R^3$, the subspace of radial functions is denoted by $X_r$, and the $L^p$ space in $t\in\R$ with values in $X$ is denoted by $L^p_tX$. 
For any function space $Z$ on $\R^{1+3}$ and $I\subset\R$, $Z(I)$ denotes the restriction onto $I\times\R^3$. The $L^2$ inner products on $\R^3$ are denoted by 
\EQ{ \label{def inner}
 (f|g):=\int_{\R^3}f(x)\ba{g(x)}dx, \pq \LR{f|g}:=\re(f|g).}

\subsection{Assumptions on $V$} \label{ss:asm V} 
Let $L^\I_0(\R^3)=\{\fy \in L^\I(\R^3) \mid \Lim_{R\to\I}\|\fy\|_{L^\I(|x|>R)}=0\}$. 
The precise assumption on $V$ is as follows. 
\begin{enumerate}
\item $V:\R^3\to\R$ is radially symmetric. 
\item $V,\ x\na V,\ x^2\na^2 V\in (L^2+L^\I_0)(\R^3)$ and $V/|x| \in L^1(\R^3)$. 
\item $-\De+V$ on $L^2_r(\R^3)$ has a unique and negative eigenvalue. 
\item The wave operator $W=\Lim_{t\to\I}e^{itH}e^{it\De}$ and its adjoint $W^*$ are bounded on the Sobolev space $W^{k,\pp}(\R^3)$ for some $\pp>6$ and $k=0,1$. 
\end{enumerate}
The above assumption (ii) implies that 
\EQ{ \label{V decay}
 \lim_{|x|\to\I} |V(x)|+|x\na V(x)|= 0} 
by the radial Sobolev inequalities, cf.~Appendix \ref{app:dec V}. 
By Beceanu \cite[Corollary 1.5]{B}, the assumption (iv) is fulfilled if $0$ is neither an eigenvalue nor resonance of $H$, and $V\in (L^\pp \cap \F\dot B^{1/2}_{2,1})(\R^3)$. 

For example, if $\psi\in\cS(\R^3)$ is a radial positive function, the above assumptions (i)-(iv) are satisfied by $V=-a\psi$ for $a\in (1/a_1,1/a_2)$, where $a_1>a_2>0$ are the largest and the second largest eigenvalues of the compact self-adjoint operator $(-\De)^{-1/2}\psi(-\De)^{-1/2}$ on $L^2_r(\R^3)$.

\section*{Acknowledgements}
This work was originally started from intensive discussions with Stephen Gustafson and Tai-Peng Tsai. The author would like to thank them for useful comments on the manuscript. 
He is also grateful for Scipio Cuccagna, Masaya Maeda, Yoshio Tsutsumi, and the anonymous referee for their comments and pointing out some errors and missing references in the first version. 

\section{Standing waves}
This section collects some properties of the solutions of \eqref{sNLSP}, namely solitons. 
It is easy to see $\om>0$ for $\fy\in H^1_r(\R^3)$, using the asymptotic behavior of the ODE as $r=|x|\to\I$. 
We will see that in the defocusing case $\sg=-$, there is a unique soliton $\fy$ for each $\om\in(0,-e_0)$ and nothing else. In the focusing case $\sg=+$, there is a soliton for each $\om\in(-e_0,\I)$, among which we can specify the ground state and the first excited state for each fixed small mass under the radial constraint. 

\subsection{Energy functionals}
For any $\fy\in H^1(\R^3)$, and $V:\R^3\to\R$, we define the following functionals on $H^1(\R^3)$.  
\EQ{ \label{def funct}
 \pt \ml{V}(\fy):=\LR{V\fy|\fy}/2, \pq \bM(\fy):=\ml{1}(\fy)=\|\fy\|_2^2/2, 
  \pq \bG(\fy):=\sg\|\fy\|^4_4/4, 
 \pr \bH(\fy):=\LR{H\fy|\fy}/2=\|\na\fy\|_2^2/2+\ml{V}(\fy),
   \pq \bE:=\bH-\bG.}
The energy $\bE$ and the mass $\bM$ are conserved in time for \eqref{NLSP}. 
The corresponding quantities without the potential $V$ are denoted by $\bH^0$, $\bE^0$, etc. 
\EQ{ \label{def H0}
  \pt \bH^0(\fy):=\|\na\fy\|_2^2/2, \pq \bE^0(\fy):=\|\na\fy\|_2^2/2-\sg\|\fy\|_4^4/4, \pr \bK_2^0(\fy):=\p_{\al=1}\bE^0(\al^{3/2}\fy(\al x)).}
For the variational property in the focusing case, we need the dilation operator 
\EQ{ \label{def Stp}
 \cS^t_p\fy(x):=e^{3t/p}\fy(e^tx), \pq \cS'_p\fy(x)=(x\cdot\na+3/p)\fy(x),}
which preserves the $L^p_x$ norm. 
The same notation is used for the functional 
\EQ{ \label{def S'}
 (\cS'_p F)(\fy):=\p_{t=0}F(\cS^t_p\fy).}
Then we have 
\EQ{
 \pt \cS'_p\bM=(6/p-3)\bM, \pq \cS'_p\bH^0=(6/p-1)\bH^0, 
 \pr \cS'_p\bG=(12/p-3)\bG,
 \pq \cS'_p\ml{V}=-\ml{\cS'_{p/(p-2)}V}.}
The $L^2$-scaling derivative plays a crucial role via the virial identity
\EQ{
 \bK_2 :=\cS'_2\bE = 2\bH^0-3\bG-\ml{\cS'_\I V}.}
The following functional is used for convexity of $\bE$ in $\cS^t_p$:
\EQ{  \label{def I}
 \bI:=\bE-\bK_2/2=\bG/2+\ml{\cS'_{3/2}V}/2.}

If there is a family of solitons $\om\mapsto\fy_\om\in H^1$ differentiable in $\om\in\R$, then we have 
\EQ{ \label{E-M diff}
 \p_\om \bE(\fy_\om) = \LR{\bE'(\fy_\om)|\fy_\om'} = \LR{-\om\fy_\om|\fy'_\om} = -\om \p_\om\bM(\fy_\om).}
For the potential part, we will frequently use the following bound
\begin{lem} \label{decop V}
Let $V\in(L^2+L^\I_0)(\R^3)$. Then for any $\e>0$, there is $C>0$ such that
\EQ{ \label{est decop V}
 H^1(\R^3)\ni\forall\fy,\pq |\ml{V}(\fy)| \le \min(\e\|\fy\|_4^2+C\|\fy\|_2^2,\e\|\fy\|_2^2+C\|\fy\|_4^2).}
\end{lem}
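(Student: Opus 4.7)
The plan is to start from the trivial pointwise estimate
\begin{equation*}
|\ml{V}(\fy)| \le \tfrac12 \int_{\R^3}|V|\,|\fy|^2\,dx,
\end{equation*}
and prove the two inequalities inside the $\min$ separately by decomposing $V$ in two different ways. The hypothesis $V\in L^2+L^\I_0$ furnishes a decomposition $V=A+B$ with $A\in L^2(\R^3)$ and $B\in L^\I_0(\R^3)$, which I would refine depending on which side of the inequality needs the small coefficient. In both cases the pairing with $|\fy|^2$ is handled by the two H\"older inequalities $\int|W||\fy|^2\le\|W\|_2\|\fy\|_4^2$ and $\int|W||\fy|^2\le\|W\|_\I\|\fy\|_2^2$.

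For the bound $|\ml{V}(\fy)|\le \e\|\fy\|_4^2+C\|\fy\|_2^2$, I would truncate $A$ by its height: writing $A=A\chi_{\{|A|\le M\}}+A\chi_{\{|A|>M\}}$, dominated convergence ensures $\|A\chi_{\{|A|>M\}}\|_2\to 0$ as $M\to\I$, so $M=M(\e)$ can be chosen making this tail norm at most $2\e$. Pair the tall part with $|\fy|^2\in L^2$ to produce $\e\|\fy\|_4^2$, and pair the remaining piece $A\chi_{\{|A|\le M\}}+B\in L^\I$, whose $L^\I$-norm is at most $M+\|B\|_\I$, with $|\fy|^2\in L^1$ to produce the $C\|\fy\|_2^2$ contribution.

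The other bound $|\ml{V}(\fy)|\le \e\|\fy\|_2^2+C\|\fy\|_4^2$ is symmetric. Here I would truncate $B$ spatially: the defining property of $L^\I_0$ gives $\|B\chi_{\{|x|>R\}}\|_\I\to 0$ as $R\to\I$, so choose $R=R(\e)$ accordingly and estimate the outer tail against $|\fy|^2\in L^1$ to get $\e\|\fy\|_2^2$. The remaining piece $B\chi_{\{|x|\le R\}}+A$ is a bounded, compactly supported function plus an $L^2$ function, hence lies in $L^2$ with norm depending only on $R$, $\|B\|_\I$ and $\|A\|_2$; pairing it with $|\fy|^2\in L^2$ by Cauchy--Schwarz gives the $C\|\fy\|_4^2$ contribution.

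There is no substantive obstacle here; the statement amounts to the two elementary refinements of $V\in L^2+L^\I_0$, namely that the $L^2$ component may be split into a piece of arbitrarily small $L^2$-norm plus an $L^\I$ piece (height truncation), and that the $L^\I_0$ component may be split into an $L^\I$ piece of arbitrarily small $L^\I$-norm plus a compactly supported, hence $L^2$, piece (spatial truncation). The whole proof is then bookkeeping of $\e$ and a choice of the parameter $M$ or $R$ depending on which of the two bounds one is after.
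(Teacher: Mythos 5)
Your proof is correct and uses exactly the same ingredients as the paper's: height-truncate the $L^2$ component and spatially truncate the $L^\I_0$ component, then apply H\"older to pair with $\|\fy\|_4^2$ or $\|\fy\|_2^2$. The only difference is organizational — the paper estimates $\ml{V_2}$ and $\ml{V_\I}$ separately, each yielding both alternatives of the $\min$ via a parameter choice, whereas you prove each alternative of the $\min$ separately, each using both pieces of $V$ — but this is cosmetic.
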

\begin{proof}
Let $V=V_2+V_\I$ where $V_2\in L^2$ and $V_\I\in L^\I_0$. 
For any $h>0$, we have 
\EQ{
 |\ml{V_2}(\fy)| \le \|V_2\|_{L^2(V>h)}\|\fy\|_4^2+h\|\fy\|_2^2,}
where  $\|V_2\|_{L^2(V>h)}\to 0$ as $h\to\I$, so the right hand side is in the form of \eqref{est decop V}, choosing $h>0$ such that $\|V_2\|_{L^2(V>h)}\le\e$ or $h\le\e$. 
For any $R>0$, 
\EQ{
 |\ml{V_\I}(\fy)| \lec \|V_\I\|_{L^\I(|x|<R)}\|\fy\|_4^2R^{3/2}+\|V_\I\|_{L^\I(|x|>R)}\|\fy\|_2^2,}
where $\|V_\I\|_{L^\I(|x|>R)}\to 0$ as $R\to\I$, so the right hand side is also in the form of \eqref{est decop V}, choosing $R>0$ such that $\|V_\I\|_{\I}R^{3/2}\le\e$ or $\|V_\I\|_{L^\I(|x|>R)}\le\e$. 
Adding the above two estimates yields the conclusion. 
\end{proof}

\subsection{Small solitons}
For small mass, the ground state is the bifurcation from zero, generated by the ground state $\phi_0$ of $H$. 
The following precise statement can be extracted from \cite[Lemma 2.1]{gnt}
\begin{lem} \label{lem:Phi}
There exists $0<z_b\ll 1$ and a $C^1$ map 
\EQ{
 (\Phi,\Om):D_b:=\{z\in\C\mid |z|^2<2z_b\} \to H^1_r(\R^3)\times\R,}
such that $(\fy,\om)=(\Phi[z],\Om[z])$ is a soliton for each $z\in D_b$, with a decomposition 
\EQ{
 \Phi[z]=z\phi_0+\ga[z], \pq \Om[z]=-e_0+o(z),\pq \ga[z]\perp\phi_0, \pq \|\ga[z]\|_{H^1}=o(|z|^2),}
satisfying the gauge covariance $(\Phi[e^{i\te}z],\Om[e^{i\te}z])=(e^{i\te}\Phi[z],\Om[z])$, and $\bM(\Phi[z])=|z|^2/2+o(|z|^4)$ is an increasing function of $|z|$. 
Moreover, there is an open set in $H^1_r(\R^3)$ which contains $\Phi[D_b]$ but no other soliton. 
\end{lem}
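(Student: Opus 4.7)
The plan is a Lyapunov--Schmidt reduction against the spectral decomposition $L^2_r=\C\phi_0\oplus\phi_0^\perp$. Writing any candidate $\fy\in H^1_r$ near $0$ uniquely as $\fy=z\phi_0+\ga$ with $z:=(\fy|\phi_0)\in\C$ and $\ga:=P_c\fy$, where $P_cf:=f-(f|\phi_0)\phi_0$, and substituting into \eqref{sNLSP}, the soliton equation splits into
\EQN{
 \text{(a)}\pq (\om+e_0)z = \sg(N(z\phi_0+\ga)|\phi_0),\pq \text{(b)}\pq (H+\om)\ga = \sg P_c N(z\phi_0+\ga),
}
with $N(u):=|u|^2u$; pairing \eqref{sNLSP} with $\fy$ itself also forces $\om\in\R$ for any genuine soliton.

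I would first solve (b) for $\ga$ as a function of $(z,\om)$. For $\om$ in a small real neighborhood of $-e_0>0$ the operator $H+\om$ is uniformly invertible on $\phi_0^\perp$, with $(H+\om)^{-1}P_c:L^2\to H^2$ bounded, while the Sobolev embedding $H^1\hookrightarrow L^6$ makes $N:H^1\to L^2$ smooth and locally Lipschitz. A standard contraction on $\{\|\ga\|_{H^1}\le C|z|^3\}$ for $|z|$ small then yields a unique fixed point $\ga=\ga[z,\om]$ with $\|\ga[z,\om]\|_{H^2}\lec|z|^3$, $C^1$ in $(z,\om)$ by the implicit function theorem.

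Substituting $\ga[z,\om]$ into (a) and using $N(z\phi_0)=|z|^2z\phi_0^3$ yields
\EQN{(\om+e_0)z = \sg|z|^2z\|\phi_0\|_4^4 + O(|z|^5).}
Gauge covariance $\ga[e^{i\te}z,\om]=e^{i\te}\ga[z,\om]$ follows from uniqueness in the previous step and the phase invariance of (b), so it suffices to parameterize along $z=r>0$: dividing by $z$ reduces (a) to $\om+e_0=\sg r^2\|\phi_0\|_4^4+O(r^4)$, which the implicit function theorem solves at once for $\om=-e_0+\sg r^2\|\phi_0\|_4^4+O(r^4)$. Extending by the gauge to complex $z\in D_b$ gives $\Om[z]=-e_0+\sg|z|^2\|\phi_0\|_4^4+O(|z|^4)$, and setting $\Phi[z]:=z\phi_0+\ga[z,\Om[z]]$ delivers the claimed $C^1$ map with all stated expansions; monotonicity of $\bM(\Phi[z])=|z|^2/2+O(|z|^6)$ in $|z|$ is then immediate.

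The last assertion --- no other soliton in an $H^1_r$-neighborhood of $\Phi[D_b]$ --- would follow by testing any nearby candidate soliton $\ti\fy$ against itself to locate $\ti\om>0$ close to $-e_0$, setting $\ti z:=(\ti\fy|\phi_0)$ and $\ti\ga:=P_c\ti\fy$: the uniqueness in the contraction forces $\ti\ga=\ga[\ti z,\ti\om]$, and uniqueness in the bifurcation equation then forces $\ti\om=\Om[\ti z]$ and $\ti\fy=\Phi[\ti z]$. The point requiring care is the degeneracy of the joint $(\ga,\om)$-linearization of $(a)\oplus(b)$ at $(z,\ga,\om)=(0,0,-e_0)$, where $\partial_\om$ of (a) vanishes identically; the decisive manoeuvre is to eliminate $\ga$ first and only then divide (a) by $z$, so that the bifurcation is unfolded along $|z|$ rather than along $\om$ and no single implicit function theorem is asked to invert the degenerate direction.
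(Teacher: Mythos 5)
The existence part of your argument is the right one and, as far as I can tell, matches the Lyapunov--Schmidt route the paper outsources to \cite{gnt}: projecting on $\C\phi_0\oplus\phi_0^\perp$, solving the $\ga$-equation by contraction/IFT for $\om$ near $-e_0$, dividing the bifurcation scalar equation by $z$ to unfold along $|z|$ rather than $\om$, and then restoring the gauge by $U(1)$-covariance. Your final remark about the degeneracy of the joint $(\ga,\om)$-linearization and why one must eliminate $\ga$ before dividing by $z$ is precisely the relevant subtlety for existence. The expansions $\Om[z]=-e_0+\sg|z|^2\|\phi_0\|_4^4+O(|z|^4)$, $\|\ga[z]\|_{H^1}\lec|z|^3$, and $\bM(\Phi[z])=|z|^2/2+O(|z|^6)$ are all consistent with (indeed stronger than) the statement; the monotonicity of $\bM$ is fine once one records the IFT derivative bound $\|\p_r\ga[r]\|_{L^2}\lec r^2$ obtained by differentiating the $\ga$-equation, which you treat as ``immediate'' but should be said.

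The gap is in the uniqueness paragraph. ``Testing $\ti\fy$ against itself'' gives $\ti\om=\bigl(\sg\|\ti\fy\|_4^4-e_0|\ti z|^2-\LR{H\ti\ga|\ti\ga}\bigr)/\bigl(|\ti z|^2+\|\ti\ga\|_2^2\bigr)$, and this is near $-e_0$ only if $\LR{H\ti\ga|\ti\ga}$ and $\|\ti\ga\|_2^2$ are negligible relative to $|\ti z|^2$, i.e.\ only if one already knows $|\ti z|\gec\|\ti\fy\|_{H^1}$. Since the open set must contain $\Phi[0]=0$ and hence a full $H^1_r$-ball around $0$, you have to rule out small solitons with $\ti z:=(\ti\fy|\phi_0)$ much smaller than $\|\ti\fy\|_{H^1}$ (equivalently, with $\ti\om$ possibly near $0$ or very large), and the argument is circular there: the contraction uniqueness you invoke is only set up for $\om$ near $-e_0$, and the ball $\|\ti\ga\|_{H^1}\le C|\ti z|^3$ in which it gives uniqueness is precisely what you have not yet shown $\ti\ga$ lies in. Closing this requires a bound on $(H+\om)^{-1}P_c$ that is \emph{uniform for all $\om>0$}, which near $\om=0$ amounts to the absence of a zero-energy resonance (uniform $L^{6/5}\to L^6$ or $L^{6/5}\to H^1$ resolvent bounds); from such a bound one gets $\|\ti\ga\|_{H^1}\lec\|\ti\fy\|_{H^1}^3$ unconditionally, hence $|\ti z|\sim\|\ti\fy\|_{H^1}$ and $|\ti\om+e_0|\lec\|\ti\fy\|_{H^1}^2$, and only then does your chain of uniqueness statements click. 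This resolvent input, which the paper encodes via the no-resonance hypothesis and assumption (iv) on the wave operator, is never invoked in your proposal and is the genuinely missing ingredient.
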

Let $\mu_b>0$ be the maximal mass among those solitons:
\EQ{ \label{def mub}
 \mu_b := \sup \bM(\Phi[D_b]).}
Then the monotonicity implies that 
\EQ{
 [0,z_b)\ni z \mapsto \bM(\Phi[z])\in [0,\mu_b)}
is an increasing bijection. Let $z_0:[0,\mu_b)\to[0,z_b)$ be the inverse function, so that 
\EQ{
 \bM(\Phi[z_0(\mu)])=\mu.} 

The following lemma shows that the above solitons are the ground states for small mass. It will be crucial also for identifying the first excited state. 
\begin{lem}[Small mass dichotomy] \label{lem:SMD}
For any $\fy\in H^1(\R^3)$ satisfying $\bK_2(\fy)\ll \bM(\fy)^{-1}$ and $\bM(\fy)\ll 1$, we have one of the following {\rm (i)-(iii)}
\begin{enumerate}
\item $\bH^0(\fy) \lec \bM(\fy)$, 
\item $\bM(\fy) \lec \bH^0(\fy) \sim \bE(\fy) \sim \bK_2(\fy)$,
\item $\sg=+$ and $\bG(\fy) \gec \bH^0(\fy)\gec \bM(\fy)^{-1}$, \label{large case}
\end{enumerate}
Moreover, in the case {\rm(\ref{large case})}, we have 
\EQ{ \label{G dom V}
 |\ml{V}(\fy)|+|\ml{\cS'_pV}(\fy)|+|\ml{\cS'_p\cS'_qV}(\fy)| \ll \bH^0(\fy)} 
for any $p,q>0$. In particular, $\bG$ in {\rm(\ref{large case})} can be replaced with $2\bI$. 
\end{lem}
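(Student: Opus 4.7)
The plan is to combine the virial identity $\bK_2 = 2\bH^0 - 3\bG - \ml{\cS'_{\I} V}$ with two scale-dependent estimates. First, Gagliardo--Nirenberg in $\R^3$ gives $|\bG(\fy)| \le C_0\,\bH^0(\fy)^{3/2}\bM(\fy)^{1/2}$. Second, since $\cS'_{\I}V = x\cdot\na V\in L^2+L^\I_0$ by assumption (ii), Lemma \ref{decop V} combined with the interpolation $\|\fy\|_4^2 \le \e\|\na\fy\|_2^2 + C_\e\|\fy\|_2^2$ yields $|\ml{\cS'_{\I}V}(\fy)|\le \e\bH^0+C_\e\bM$ for any $\e>0$. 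Substituting into the virial identity, using the hypothesis $|\bK_2|\le\eta\bM^{-1}$ with small parameter $\eta$, and multiplying through by $\bM$, I obtain the one-variable inequality
\[
(2-\e)Y - 3C_0\,Y^{3/2} \le \eta + C_\e\bM^2, \qquad Y := \bH^0\bM.
\]

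The function $f(Y):=(2-\e)Y-3C_0Y^{3/2}$ is smooth, vanishes at $Y=0$ and $Y_0:=((2-\e)/(3C_0))^2$, stays strictly positive on $(0,Y_0)$, and attains a maximum of size $\sim 1$. Choosing $\e$ small first, then $\eta$ and $\bM$ small, the inequality forces $Y$ into either the small region $Y\lec\eta+\bM^2$ or the large region $Y\ge Y_0-o(1)$. Translating back through $\bH^0=Y/\bM$, either $\bH^0\lec\eta\bM^{-1}+\bM$ or $\bH^0\gec\bM^{-1}$; this is the core dichotomy from which the three cases emerge.

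In the small-$Y$ regime, if additionally $\bH^0\lec\bM$ we are in case (i); otherwise $\bM\ll\bH^0$ and $\bH^0\bM\ll 1$, so $|\bG|\le C_0\bH^0(\bH^0\bM)^{1/2}\ll\bH^0$ and $|\ml{V}|\le\e\bH^0+C_\e\bM\ll\bH^0$, yielding $\bE=\bH^0+\ml{V}-\bG\sim\bH^0$ and $\bK_2=2\bH^0+o(\bH^0)\sim\bH^0$, which is case (ii). In the large-$Y$ regime, $\bH^0\gec\bM^{-1}\gg\bM$, and solving the virial identity for $\bG$ as $3\bG=2\bH^0-\bK_2-\ml{\cS'_{\I}V}$, with $|\bK_2|\le\eta\bM^{-1}\ll\bH^0$ and $|\ml{\cS'_{\I}V}|\ll\bH^0$, gives $\bG\ge(2/3-o(1))\bH^0>0$, forcing $\sg=+$ and delivering case (iii).

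For the \emph{moreover} claim, the assumption $V,\,x\na V,\,x^2\na^2 V\in L^2+L^\I_0$ ensures that $\cS'_p V$ and $\cS'_p\cS'_q V$ are linear combinations of $V$, $x\na V$, and $x^2\na^2 V$, hence also lie in $L^2+L^\I_0$. Lemma \ref{decop V} then bounds each by $\e\bH^0+C_\e\bM\ll\bH^0$, using $\bM\ll\bM^{-1}\lec\bH^0$ in case (iii). The replacement of $\bG$ by $2\bI=\bG+\ml{\cS'_{3/2}V}$ follows immediately from $|\ml{\cS'_{3/2}V}|\ll\bH^0\lec\bG$. The main obstacle I expect is the bookkeeping around the small parameters: $\e$ must be fixed first so the coefficient $(2-\e)$ in $f$ stays bounded below, then $\eta$ and $\bM$ must be chosen small relative to $\e$ and $C_0$ so that the forbidden intermediate range $\eta\bM^{-1}\ll\bH^0\ll\bM^{-1}$ is genuinely nonempty and the potential perturbations can be absorbed cleanly.
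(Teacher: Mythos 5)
Your argument is correct and rests on exactly the same ingredients as the paper's proof: the virial identity $\bK_2=2\bH^0-3\bG-\ml{\cS'_\I V}$, the Gagliardo--Nirenberg bound $|\bG(\fy)|\lec\bH^0(\fy)^{3/2}\bM(\fy)^{1/2}$, and Lemma \ref{decop V} (plus interpolation) for the potential terms, so the two proofs are essentially identical in substance. The only real difference is organizational: the paper simply splits on the size of $h:=\bH^0$ into the three regimes $h\lec\mu$, $\mu\ll h\ll\mu^{-1}$, $\mu^{-1}\lec h$ (with $\mu:=\bM$) and verifies that each yields (i), (ii), (iii) respectively, while you pass to the single dimensionless variable $Y:=\bH^0\bM$ and let the concavity/shape of $f(Y)=(2-\e)Y-3C_0Y^{3/2}$ do the case-splitting. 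Your packaging is a bit cleaner and in fact proves slightly more, namely that an entire intermediate band of $Y$ is forbidden under the hypothesis $\bK_2\ll\bM^{-1}$ — a point the paper records only informally in the remark following the lemma. One minor slip: you quote the hypothesis as $|\bK_2|\le\eta\bM^{-1}$, but the lemma assumes only the one-sided bound $\bK_2\ll\bM^{-1}$; fortunately your argument uses only the upper bound on $\bK_2$ (both when deriving the one-variable inequality and when showing $\bK_2\ll\bH^0$ in case (iii)), so nothing breaks.
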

Note that the first two regions overlap each other, but the last one is separated. Thus the above lemma gives a dichotomy into (i)-(ii) and (iii). The case (ii) can be removed if $\bK_2(\fy)\ll\bM(\fy)$, which is mostly satisfied when the above lemma is used. 

\begin{proof}
Let $\mu:=\bM(\fy)$ and $h:=\bH^0(\fy)$. Using Gagliardo-Nirenberg, we have 
\EQ{ \label{h-K2}
 \pt |\bG(\fy)| \lec \|\fy\|_4^4 \lec h^{3/2}\mu^{1/2},
 \pr |\ml{W}(\fy)| \lec \|W\|_{L^2+L^\I}(\mu + h^{3/4}\mu^{1/4})\pq(W=V,\cS_p'V,\cS_p'\cS_q'V).}
Splitting into three cases: 
(1) $h\lec \mu$, 
(2) $\mu\ll h\ll\mu^{-1}$,
and (3) $\mu^{-1}\lec h$, 
we may first dispose of (1)=(i). 
In the case of (2), the above estimates imply 
\EQ{
 |2h-\bK_2(\fy)|=|\ml{\cS'_\I V}(\fy)-3\bG(\fy)| \lec \mu+h^{3/4}\mu^{1/4}+h^{3/2}\mu^{1/2} \ll h,}
and the same estimate works for $|\bH^0(\fy)-\bE(\fy)|=|\ml{V}(\fy)-\bG(\fy)|$, leading to (ii). 
In the case of (3), we have $h\gec\mu^{-1}\gg 1$. 
Then using $\bK_2(\fy)\ll\mu^{-1}\lec h$ and $\mu+h^{3/4}\mu^{1/4} \ll h$ in \eqref{h-K2}, we obtain \eqref{G dom V} and 
\EQ{
 h \lec 2h-\bK_2(\fy)-\ml{\cS'_\I V}(\fy)=3\bG(\fy),}
leading to (iii). 
\end{proof}
The above lemma enables us to identify the solitons in Lemma \ref{lem:Phi} with $\Soli_0$: 
\begin{prop}
There exists $0<\mu_d\le\mu_b$ and $0<z_d\le z_b$ such that 
\EQ{ \label{small ground}
 \pt \{\fy\in\Soli_0\mid \bM(\fy)<\mu_d\}=\{\Phi[z]\mid |z|<z_d\},
 \pr 0<\mu<\mu_d \implies \sE_0(\mu)=\bE(\Phi[z_0(\mu)])\sim e_0\mu<0.}
\end{prop}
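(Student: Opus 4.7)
The plan is to combine the Pohozaev-type identity $\bK_2(\fy)=0$ for every soliton with the small-mass dichotomy (Lemma~\ref{lem:SMD}), sorting all small-mass solitons into exactly two branches whose energies differ in sign and in order of magnitude, and then reading off the ground state.

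First I would establish $\bK_2(\fy)=0$ for every $\fy\in\Soli$. A soliton is a constrained critical point of $\bE$ on $\{\bM=\text{const}\}$ with Lagrange multiplier $\om$, while the $L^2$-preserving dilation $\cS^t_2$ preserves $\bM$; hence $0=\p_{t=0}\bE(\cS^t_2\fy)=(\cS'_2\bE)(\fy)=\bK_2(\fy)$. Consequently, for any soliton with small $\bM(\fy)=\mu$ we have $\bK_2(\fy)=0\ll\mu^{-1}$, so one of the three cases of Lemma~\ref{lem:SMD} must hold. Case~(ii) is excluded at once: it demands $\bH^0(\fy)\sim\bK_2(\fy)=0$ together with $\mu\lec\bH^0(\fy)$, forcing $\mu=0$.

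In case~(i), $\|\fy\|_{H^1}^2\lec\mu$, so for $\mu$ sufficiently small $\fy$ lies in the $H^1_r$-neighborhood of $\Phi[D_b]\ni\Phi[0]=0$ furnished by Lemma~\ref{lem:Phi} (which contains an $H^1$-ball around $0$), forcing $\fy=\Phi[z]$ for some $|z|=z_0(\mu)$. Using $\Phi[z]=z\phi_0+\ga[z]$ with $\|\ga[z]\|_{H^1}=o(|z|^2)$, together with $H\phi_0=e_0\phi_0$ and $\|\phi_0\|_2=1$,
\EQN{ \bE(\Phi[z])=e_0|z|^2/2+O(|z|^4),\pq \bM(\Phi[z])=|z|^2/2+O(|z|^4), }
so $\bE(\Phi[z_0(\mu)])=e_0\mu+O(\mu^2)\sim e_0\mu<0$ for small $\mu$. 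In case~(iii) we necessarily have $\sg=+$ and $\bH^0(\fy)\gec\mu^{-1}\gg 1$; combining $\bK_2(\fy)=0$, i.e.\ $3\bG(\fy)=2\bH^0(\fy)-\ml{\cS'_\I V}(\fy)$, with the smallness \eqref{G dom V} of the potential terms yields
\EQN{ \bE(\fy)=\bH^0(\fy)+\ml{V}(\fy)-\bG(\fy)=\tfrac{1}{3}\bH^0(\fy)+o(\bH^0(\fy))\gec\mu^{-1}, }
which is strictly positive and much larger in absolute value than $|e_0\mu|$ for small $\mu$.

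Choosing $\mu_d\in(0,\mu_b]$ small enough for both the case~(i) neighborhood inclusion and the comparison $\mu^{-1}\gg|e_0\mu|$ to be valid, every ground state of mass $\mu<\mu_d$ must come from case~(i), yielding \eqref{small ground} with $z_d:=z_0(\mu_d)$ and the asymptotic $\sE_0(\mu)\sim e_0\mu$. I expect the main obstacle to be the sign and size control of $\bE$ on the case~(iii) branch: one has to suppress the negative linear contribution of $V$ (which is of order $e_0\mu$ on low-frequency modes) against the nonlinear terms, and the crucial input is \eqref{G dom V}, which makes the potential terms negligible relative to $\bH^0$ on the large branch, so that the focusing identity $3\bG=2\bH^0+o(\bH^0)$ enforces $\bE=\bH^0/3+o(\bH^0)>0$.
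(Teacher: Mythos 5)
Your proof is correct and follows essentially the same route as the paper: $\bK_2=0$ on $\Soli$, apply the small-mass dichotomy to land in case (i) (hence in the uniqueness region of $\Phi[D_b]$) or case (iii) (positive, large energy, hence not a ground state since $\Phi$ supplies a same-mass soliton of negative energy). The only cosmetic difference is that you obtain $\sE_0(\mu)\sim e_0\mu$ by expanding $\bE$ and $\bM$ directly from $\Phi[z]=z\phi_0+\ga[z]$, whereas the paper invokes the differential identity $\p_\om\bE(\fy_\om)=-\om\,\p_\om\bM(\fy_\om)$ from \eqref{E-M diff} together with the $C^1$-dependence in Lemma~\ref{lem:Phi}; both yield $d\bE/d\bM\to e_0$ and are equally valid.
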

\begin{proof}
Since $\bK_2=0$ on $\Soli$, we can apply the above lemma to any $\fy\in\Soli$ with $\bM(\fy)<\mu_d\ll 1$, leading to either (i) or (iii). 
Taking $\mu_d>0$ small ensures that the region (i) is in the uniqueness region of $\Phi[D_b]$ in Lemma \ref{lem:Phi}, as well as that the region (iii) is far away. 
Then every $\fy\in\Soli$ with $\bM(\fy)<\mu_d$ is either in $\Phi[D_b]$ or in the region (iii). 
In the latter case, $\fy$ is an excited state, as $\Phi$ gives a soliton with the same mass and negative energy. 
Thus we obtain the first identity in \eqref{small ground}. 
The second one is its obvious consequence. The behavior of $\sE_0$ follows from \eqref{E-M diff} together with the differentiability of $\Phi$ from Lemma \ref{lem:Phi}.
\end{proof}

\subsection{Focusing case $\sg=+$}
Next we investigate the first excited state of small mass in the focusing case. The small mass dichotomy Lemma \ref{lem:SMD} allows us to ignore the potential effect, leading to the same variational characterization as $V=0$ in the higher energy region. 
\begin{prop} \label{Soli foc}
Let $\sg=+$. There exists $0<\mu_e\le\mu_d$ such that for $0<\mu<\mu_e$
\EQ{ \label{char E1}
 \sE_1(\mu)\pt=\inf\{\bE(\fy) \mid \fy\in H^1_r,\ \bM(\fy)=\mu,\ \bK_2(\fy)=0,\ \bG(\fy)\ge 1\}
 \pr=\inf\{\bI(\fy) \mid \fy\in H^1_r,\ \bM(\fy)\le\mu,\ \bK_2(\fy)\le 0,\ \bG(\fy)\ge 1\}
 \pr= \mu^{-1}\bM(Q)\bE^0(Q)(1+o(1)) \gg 1 \pq (\mu\to 0),}
where $\inf$ is attained by some $\fy\in\Soli_1$, and 
$\sE_1(\mu)$ is decreasing in $\mu$. 
Moreover, there is a continuous function $\ka:(0,\mu_e)\times(0,\I)\to(0,1/2)$ such that for any $\de>0$ and any $\fy\in H^1_r$ satisfying $\bM(\fy)<\mu_e$, $\bE(\fy)\le \sE_1(\mu)-\de$ and $\|\na\fy\|_2\ge 1$, we have 
\EQ{ \label{bd K2}
 |\bK_2(\fy)| \ge \ka(\bM(\fy),\de).}
\end{prop}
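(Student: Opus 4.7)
The plan is to reduce, by means of the small-mass dichotomy of Lemma~\ref{lem:SMD}, the variational problem for $\sE_1(\mu)$ at small $\mu$ to the potential-free sharp Gagliardo--Nirenberg / Weinstein problem with extremizer $Q$. Any soliton $\fy\in\Soli_1$ with $\bM(\fy)=\mu\ll 1$ satisfies $\bK_2(\fy)=0$ by Pohozaev, and by Lemma~\ref{lem:SMD} together with the previous proposition identifying $\Soli_0$, such $\fy$ must lie in regime (iii) of Lemma~\ref{lem:SMD}, with $\bG(\fy)\gec\mu^{-1}\gg 1$. Hence $\Soli_1\cap\{\bM=\mu\}\subset S_1(\mu):=\{\fy\in H^1_r:\bM(\fy)=\mu,\,\bK_2(\fy)=0,\,\bG(\fy)\ge 1\}$, and using $\bI\equiv\bE$ on $\{\bK_2=0\}$ together with the inclusion $S_1(\mu)\subset S_2(\mu):=\{\bM\le\mu,\,\bK_2\le 0,\,\bG\ge 1\}$, I obtain the chain $\sE_1(\mu)\ge\inf_{S_1(\mu)}\bE=\inf_{S_1(\mu)}\bI\ge\inf_{S_2(\mu)}\bI$.

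To evaluate the rightmost infimum, Lemma~\ref{lem:SMD} places each admissible $\fy$ in regime (iii), so $\bI(\fy)=(1+o(1))\bG(\fy)/2$ and the constraint $\bK_2(\fy)\le 0$ reduces, modulo $o(\bH^0)$ potential terms, to $2\bH^0\le(3+o(1))\bG$. Combining with the sharp Weinstein inequality $4\bG\le C_{GN}(2\bM)^{1/2}(2\bH^0)^{3/2}$ (saturated by $Q$) yields $\bG(\fy)\ge(2-o(1))\bM(Q)\bE^0(Q)/\mu$, hence $\inf_{S_2(\mu)}\bI\ge(1-o(1))\mu^{-1}\bM(Q)\bE^0(Q)$. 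Attainment in $\Soli_1$ is handled by taking a minimizing sequence in the radial class: it is $H^1$-bounded (since regime (iii) gives $\bH^0\sim\bG\sim\bI\lec\mu^{-1}$) and passes to a weak limit $\fy_\star$. The Strauss compact embedding $H^1_r\hookrightarrow L^4_r$ together with Lemma~\ref{decop V} yields strong convergence of $\bG$ and of the $V$-functionals, while lower semicontinuity of $\bH^0$ gives $\bI(\fy_\star)\le\inf_{S_2(\mu)}\bI$. The Lagrange multiplier on $\bK_2\le 0$ vanishes, because $\cS'_2\bK_2(\fy_\star)=4\bH^0-9\bG+o(\bH^0)=-2\bH^0(1+o(1))\ne 0$ in regime (iii), so $\fy_\star$ solves the soliton equation and, being in regime (iii), lies in $\Soli_1$. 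Loss of $L^2$-mass is ruled out because it would produce a soliton at strictly smaller mass with strictly larger asymptotic $\bI$-value, contradicting optimality. This forces equality throughout the chain and gives the asymptotic; monotonicity of $\sE_1$ then follows directly from the $S_2$-formulation, since enlarging $\mu$ enlarges the admissible set.

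For the final assertion, I argue by contradiction and compactness. Suppose $\fy_n\in H^1_r$ satisfy $\bM(\fy_n)=\mu_n\in(0,\mu_e)$, $\bE(\fy_n)\le\sE_1(\mu_n)-\de$, $\|\na\fy_n\|_2\ge 1$, and $\bK_2(\fy_n)\to 0$. Lemma~\ref{lem:SMD} (with $\bK_2\to 0\ll\mu_n^{-1}$) excludes regime (i) via $\|\na\fy_n\|_2\ge 1$ and regime (ii) via $\bH^0\ge 1/2$ combined with $\bK_2\to 0$, forcing regime (iii). Setting $f_n(t):=\bE(\cS^t_2\fy_n)$ gives $f_n'(0)=\bK_2(\fy_n)\to 0$ and $f_n''(0)=-2\bH^0(\fy_n)(1+o(1))$ by direct computation of $\cS'_2\bK_2$, so the nearby critical point $t_n$ of $f_n$ obeys $|t_n|=O(\bK_2(\fy_n)/\bH^0(\fy_n))\to 0$ and $f_n(t_n)-f_n(0)=O(\bK_2(\fy_n)^2/\bH^0(\fy_n))\to 0$. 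The rescaled $\ti\fy_n:=\cS^{t_n}_2\fy_n$ satisfies $\bM(\ti\fy_n)=\mu_n$, $\bK_2(\ti\fy_n)=0$, and $\bG(\ti\fy_n)\gec\mu_n^{-1}\gg 1$, so $\ti\fy_n\in S_1(\mu_n)$ and $\bE(\ti\fy_n)\ge\sE_1(\mu_n)$; combined with $\bE(\ti\fy_n)=\bE(\fy_n)+o(1)$ this contradicts $\bE(\fy_n)\le\sE_1(\mu_n)-\de$. A continuous $\ka\in(0,1/2)$ is then obtained as a smoothed minorant of the implicitly defined infimum. The main technical obstacle is the attainment step: the radial $H^1\hookrightarrow L^2$ embedding is not compact, so either this is handled via the $\bM\le\mu$ formulation combined with strict monotonicity of the leading asymptotic, or one exhibits a soliton in $\Soli_1$ directly by bifurcation from $Q$ and verifies minimality a posteriori.
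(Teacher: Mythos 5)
Your overall architecture is the same as the paper's: reduce via Lemma~\ref{lem:SMD} to regime (iii), establish the chain of infima $\sE_1\ge\inf_{S_1}\bE\ge\inf_{S_2}\bI$, extract a minimizer of the second infimum by weak convergence using the Strauss embedding, remove the virial constraint Lagrange multiplier by differentiating along $\cS^t_2$, and identify the minimizer as an excited-state soliton. Your treatment of the $|\bK_2|$-lower bound by sliding along $\cS^t_2$ to a nearby zero of $\bK_2$ and estimating the energy change via $f_n''(0)=\cS'_2\bK_2(\fy_n)\sim -2\bH^0(\fy_n)$ is a valid (and arguably cleaner) alternative to the paper's approach, which instead folds that conclusion into the same compactness argument by taking any sequence with $\bK_2\to 0$. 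Your use of the sharp Gagliardo--Nirenberg/Weinstein inequality to get the leading-order \emph{lower} bound on $\inf_{S_2}\bI$ is also a valid substitute for the paper's device of rescaling $\psi(x)=\mu\fy(\mu x)$, $V_\mu=\mu^2V(\mu\cdot)$; note however that the paper's rescaling simultaneously yields the matching upper bound (by passing to the $V=0$ problem), whereas your argument as written only gives the lower bound and would still need an explicit trial function, e.g.\ a suitably scaled and truncated $Q$, to close the asymptotic.

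The genuine gap is the step ruling out mass loss, i.e.\ showing the minimizer $\fy_\star$ satisfies $\bM(\fy_\star)=\mu$ and $\bK_2(\fy_\star)=0$. Your suggestion --- that $\bM(\fy_\star)<\mu$ ``would produce a soliton at strictly smaller mass with strictly larger asymptotic $\bI$-value, contradicting optimality'' --- is circular: the asymptotic formula is exactly what you are trying to establish, and strict monotonicity of $\sE_1$ in $\mu$ at the required precision is not available before the attainment step. The paper closes this by a direct scaling argument that does not invoke the asymptotic at all: along $v(t)=\cS^t_{3.5}\fy$ one has $\cS'_{3.5}\bM=-\tfrac{9}{7}\bM<0$, $\cS'_{3.5}\bI\gec\mu^{-1}>0$, and $\cS'_{3.5}\bK_2>\tfrac{5}{7}(\bK_2+1)$ in regime (iii), so flowing with $t<0$ strictly decreases $\bI$ while increasing $\bM$ toward $\mu$ and keeping $\bK_2<0$ and $\bG\ge 1$; similarly flowing along $\cS^t_2$ with $t<0$ strictly decreases $\bI$ while pushing $\bK_2$ up to $0$ and preserving $\bM$. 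This shows directly that a minimizer of $\inf_{S_2}\bI$ saturates both constraints, with no appeal to the (not-yet-proved) asymptotic. You should replace your a~posteriori argument with this monotonicity-under-scaling argument; this is precisely the mechanism that also makes the second infimum manifestly non-increasing in $\mu$ and reduces it to the first, completing the equality chain once the Lagrange multiplier vanishes.
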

The above minimization is well known in the case $V=0$ without the restriction on $\bG$. 
Some restriction to higher energy is needed in the case $V\not=0$, since those $\inf$ in \eqref{char E1} would become $\sE_0(\mu)$ without $\bG\ge 1$. 
\begin{proof}
First, the $\ge$ part of \eqref{char E1} is obvious from $\bK_2=0$ on $\Soli$, the dichotomy Lemma \ref{lem:SMD}, and $\bI=\bE-\bK_2/2$. 
The second infimum is obviously decreasing in $\mu$. 

To show the $\le$ part of the second equality, let $\fy\in H^1_r$, $\bM(\fy)<\mu$, $\bK_2(\fy)\le 0$ and $\bG(\fy)\ge 1$. 
Consider the one-parameter scaling $v(t):=\cS_{3.5}^t\fy$ for $t\le 0$. 
The dichotomy implies $\bG(\fy)\gec\mu^{-1}\gg 1$. 
Since $\cS'_{3.5}\bM=-9\bM/7<0$, there exists $T<0$ such that $\bM(v(T))=\mu$. Moreover, since 
\EQ{
 \cS_{3.5}'\bI=\frac{3}{14}\bG-\frac{1}{2}\ml{\cS_{7/3}'\cS_{3/2}'V},
 \pq \cS_{3.5}'\bK_2=\frac{5}{7}\bK_2+\frac{6}{7}\bG+\ml{\cS'_{3/2}V},}
we have, using Lemma \ref{decop V}, 
\EQ{ \label{I' K2' bd}
 \cS_{3.5}'\bI(v)\gec \mu^{-1}, \pq \cS_{3.5}'\bK_2(v)>5(\bK_2(v)+1)/7} 
as long as $\bK_2(v)\le\bM(v)\le \mu$. 
The second inequality of \eqref{I' K2' bd} implies that if $\bK_2(v)\ge -1$, $\bK_2(v)$ is decreasing as $t$ decreases. 
Therefore $\bK_2(v)<0$ and \eqref{I' K2' bd} are preserved for $0>t>T$, 
hence the infimum is reduced to the case $\bM(\fy)=\mu$. 

Next consider the one-parameter scaling $v(t):=\cS_2^t\fy$ for $t\le 0$. 
Since 
\EQ{ \label{S2'I}
 \cS'_2\bI=3\bG/2-\ml{\cS'_\I\cS'_{3/2}V}/2, 
 \pq \cS'_2\bK_2=2\bK_2-2\cS'_2\bI,} 
a similar argument as above implies that $\bI(v)$ is decreasing and $\bK_2(v)$ is increasing as $t$ decreases, as long as $\bK_2(v)<0$, while $\bG(v)\ge 1$ is preserved by the dichotomy. 
Thus the infimum is further reduced to the case $\bK_2(\fy)=0$, which means the second equality in \eqref{char E1}. 

To prove the existence of minimizer as well as the lower bound \eqref{bd K2} on $|\bK_2|$, take any sequence $\fy_n\in H^1_r$ satisfying $\bM(\fy_n)\to\mu\in(0,\mu_e)$, $\bK_2(\fy_n)\to 0$, $\bG(\fy_n)+\|\na \fy_n\|_2\ge 1$ and $\bE(\fy_n)\to E_\I\le E_*(\mu)$, where $E_*$ is the infimum in \eqref{char E1}. 
Using Lemma \ref{decop V} with Gagliardo-Nirenberg, we have 
\EQ{ \label{bd by K2}
 \bH^0=3\bE-\bK_2-\ml{\cS'_1V} \le 3\bE-\bK_2+C\bM+\bH^0/2.}
Hence $\fy_n$ is bounded in $H^1_r$, and so, we may assume, passing to a subsequence, that $\fy_n\to\exists\fy\in H^1_r$ weakly. 
Since $\bK_2=2\bH^0-3\bG-\ml{\cS'_\I V}$, disposing of the potential part as above, we deduce that $\bG(\fy_n)\gec 1$ and $\|\na\fy_n\|_2\gec 1$ are equivalent for large $n$, and then the dichotomy implies $\bH^0(\fy_n)\sim \bG(\fy_n)\gec \mu^{-1}$. 

Since $\bG$ and the potential functionals are weakly continuous on $H^1_r$, we have 
\EQ{
 \bI(\fy)=E_\I, \pq \bG(\fy) \ge 1, \pq \bM(\fy)\le\mu, \pq \bK_2(\fy)\le 0,}
hence $E_\I=E_*$ and $\fy$ is a minimizer of \eqref{char E1}. 
Moreover, the above argument implies that $\bM(\fy)=\mu$ and $\bK_2(\fy)=0$. 
Since the dichotomy implies $\bG(\fy)\gec\mu^{-1}\gg 1$, we have Lagrange multipliers $\om,\al\in\R$ such that 
\EQ{ 
 \bE'(\fy) + \om \bM'(\fy) = \al \bK_2(\fy).}
Differentiation along the curve $\cS_2^t\fy$ at $t=0$ yields
\EQ{
 0 = \bK_2(\fy) = \cS'_2\bE(\fy) = \al \cS'_2\bK_2(\fy)=-2\al\cS_2'\bI(\fy),}
where $\cS'_2\bI(\fy)\not=0$ by \eqref{S2'I} with the dichotomy, 
hence $\al=0$. 
This means that $\fy\in\Soli$ and so $\sE_1(\mu)=E_*(\mu)$, as well as the lower bound \eqref{bd K2} on $|\bK_2|$. 

Finally, we prove the asymptotic formula. 
In the second infimum in \eqref{char E1}, put $\psi(x):=\mu\fy(\mu x)$ and $V_\mu(x):=\mu^2V(\mu x)$. Then 
\EQ{ \label{muE1 rescaled}
 \mu \sE_1(\mu)\pt= \min_{\psi\in A_\mu}\left\{\bG(\psi)/2+\ml{(\cS'_{3/2}V)_\mu}(\psi)\right\},
 \pr A_\mu:=\{\psi\in H^1_r \mid \bM(\psi)\le 1,\ \bK_2^0(\psi)\le\ml{(\cS'_\I V)_\mu}(\psi),\ \bG(\psi)\ge \mu\}.}
Since $\|V_\mu\|_{L^2+L^\I}\le \mu^{1/2}\|V\|_{L^2+L^\I}$, we have for $p>0$
\EQ{
 |\ml{(\cS'_pV)_\mu}(\psi)| \lec \mu^{1/2}(\|\psi\|_4^2+\|\psi\|_2^2).}
Hence if $\psi\in H^1_r$ satisfies $\bM(\psi)\le 1$ and $\bK_2^0(\psi)\le -1$, then $\psi\in A_\mu$ for $0<\mu\ll 1$. 
Therefore as $\mu\to 0$, the minimizer $\psi$ is bounded in $L^4_x$. 
Since $\bG(\fy)\gec\mu^{-1}$, we obtain $\bG(\psi)\sim 1$ and $|\ml{(\cS'_pV)_\mu}(\psi)|\le O(\mu^{1/2})$. 
On the other hand, for any $\psi\in H^1_r$ satisfying $\bM(\psi)\le 1\sim\bG(\psi)$ and $\bK_2^0(\psi)=0$, we deduce from \eqref{S2'I} that for $0<\mu\ll 1$ there exists $t=O(\mu^{1/2})$ such that $\cS_2^t\psi\in A_\mu$, using the implicit function theorem around $t=0$.  
Therefore
\EQ{ \label{Q mini}
 \lim_{\mu\to 0}\mu \sE_1(\mu)=\inf\{\bG(\psi)/2 \mid \psi\in H^1_r,\ \bM(\psi)\le 1,\ \bK_2^0(\psi)\le 0,\ \psi\not=0\}.}
To see that the above equals $\bM(Q)\bE^0(Q)$, we may first replace $\bK_2^0(\psi)\le 0$ with $\bK_2^0(\psi)=0$, since on the curve $\R\ni t\mapsto \cS_2^t\psi\in H^1_r$ for any $\psi\in H^1_r\setminus\{0\}$, $\bM$ is constant, $\bG$ is increasing, and $\bK_2^0$ changes its sign exactly once and from positive to negative. 
Next, since $\bG(\cS_3^t\psi)=e^t\bG(\psi)$, $\bM(\cS_3^t\psi)=e^{-t}\bM(\psi)$ and $\bK_2^0(\cS_3^t\psi)=e^t\bK_2^0(\psi)$, we may remove $\bM(\psi)\le 1$ by replacing the minimized quantity $\bG/2$ with $\bM\bG/2$, which may further be replaced with $(\bG/2+\bM)^2/4$, because 
\EQ{
 \inf_{t\in\R}(\bG/2+\bM)(\cS_3^t\psi)=\inf_{t\in\R}[e^t\bG(\psi)/2+e^{-t}\bM(\psi)]=2\sqrt{\bM(\psi)\bG(\psi)}.}
Since $\bG/2=\bE^0$ on $\bK_2^0=0$, we thus obtain 
\EQ{
 \eqref{Q mini}=\left[\inf\{(\bE^0+\bM)(\psi)\mid \psi\in H^1_r\setminus\{0\},\ \bK_2^0(\psi)=0\}\right/2]^2.}
It is well-known that the above infimum is attained by the ground state $Q$ (see, e.g., \cite[Lemma 2.1]{NSK}). 
Using that $\bK_2^0(Q)=0=\p_{\al=1}(\bE^0+\bM)(\al Q)$, it is elementary to see that the above equals $\bM(Q)\bE^0(Q)$. 
\end{proof}

\subsection{Defocusing case $\sg=-$}
In the defocusing case, the variational structure is much simpler, and so we can determine the entire set of solitons $\Soli$ without the mass constraint. 
In this subsection, we prove the following
\begin{prop} \label{Soli defoc}
Let $\sg=-$. Under the assumptions on $V$ in Section \ref{ss:asm V}, the equation \eqref{sNLSP} has a unique positive solution $\fy_\om$ for each $\om\in(0,-e_0)$, and 
\EQ{ 
 \Soli=\{e^{i\te}\fy_\om \mid 0<\om<-e_0,\ \te\in\R\}.}
The function $(0,-e_0)\ni\om\mapsto\bM(\fy_\om)\in(0,\I)$ is $C^1$, decreasing and bijective. Let $\om_0(\mu)$ be its inverse function. Then for all $\mu>0$,
\EQ{
 e_0\mu < \sE_0(\mu)=\bE(\fy_{\om_0(\mu)}) < 0 < \I=\sE_1(\mu),\pq \om_0'(\mu)<0.}
\end{prop}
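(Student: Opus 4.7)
The plan is to proceed in four steps: (i) establish the admissible range $\om\in(0,-e_0)$ and reduce to positive real-valued radial solutions; (ii) produce such a solution for each admissible $\om$ by a variational argument; (iii) prove uniqueness, which is the main obstacle; and (iv) analyze the parametrization $\om\mapsto\fy_\om$ and its limits. For step (i), $\om>0$ is the ODE asymptotic fact already recorded at the start of this section, while $\om\ge-e_0$ is excluded by testing \eqref{sNLSP} against $\fy$: one gets $\LR{(H+\om)\fy|\fy}=-\|\fy\|_4^4$, which combined with $H\ge e_0$ forces $\fy=0$. Writing any nonzero $\fy\in\Soli$ as $e^{i\te(x)}R(x)$ with $R=|\fy|$, the imaginary part of \eqref{sNLSP} gives $\nabla\cdot(R^2\nabla\te)=0$; the Kato inequality combined with the strong maximum principle applied to the equation $(H+\om+|\fy|^2)\fy=0$ (whose coefficients become bounded after a standard $H^1\to L^\I$ bootstrap) forces $R>0$, so $\te$ is constant.

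For step (ii), I would minimize the scale-invariant Rayleigh quotient $\mathcal{J}_\om(\fy):=\LR{(H+\om)\fy|\fy}/\|\fy\|_4^2$ over $H^1_r\setminus\{0\}$. It is strictly negative (test with $\phi_0$) and bounded below by Lemma \ref{decop V} with $\e<\om$ in its second form, which also yields uniform $H^1_r$-boundedness of any $L^4$-normalized minimizing sequence. Radial compactness $H^1_r\hookrightarrow L^4$ and continuity of the potential quadratic form on $H^1_r$ (splitting $V=V_2+V_\I$ as in Lemma \ref{decop V}) produce a nonnegative minimizer (replace by $|\cdot|$ if necessary), and after rescaling by $\sqrt{-\mathcal{J}_\om(\fy_*)}$ it solves \eqref{sNLSP}; strict positivity then follows from the strong maximum principle.

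Step (iii) is the main obstacle and I plan to handle it by a Brezis-Oswald-type argument, exploiting that $u\mapsto-(V+\om)-u^2$ is strictly decreasing in $u>0$. Given two positive radial solutions $u,v\in H^1_r$, I would test the $u$-equation with $(u^2-v^2)/u$ and the $v$-equation with $(u^2-v^2)/v$ and subtract; the $V$- and $\om$-contributions cancel, a Picone-type rearrangement of the Laplacian part yields $\int(u^2+v^2)|\nabla u/u-\nabla v/v|^2\ge 0$, while the nonlinearity contributes $-\int(u^2-v^2)^2\le 0$. Equating the two forces $u\equiv v$. The delicate point is justifying the integration by parts on $\R^3$; I plan to do so via the uniform exponential decay $u,v\sim Ae^{-\sqrt{\om}r}/r$ extracted from the asymptotic ODE, which keeps $u/v$ and $v/u$ globally bounded, with a smooth cutoff as a safety net.

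For step (iv), the linearized operator $L_\om:=H+\om+3\fy_\om^2$ satisfies $L_\om\fy_\om=2\fy_\om^3>0$ with $\fy_\om>0$; pairing against the positive Perron-Frobenius ground state of $L_\om$ forces its lowest spectral value to be strictly positive, so $L_\om$ is boundedly invertible on $L^2_r$, and the implicit function theorem yields $C^1$-dependence with $\p_\om\bM(\fy_\om)=-\LR{\fy_\om|L_\om^{-1}\fy_\om}<0$, from which $\om_0'(\mu)=1/\p_\om\bM<0$. The bifurcation expansion $\fy_\om\sim c\phi_0$ with $c^2=(-e_0-\om)/\|\phi_0\|_4^4+o(1)$ as $\om\to-e_0^-$ gives $\bM(\fy_\om)\to 0$; for the reverse limit $\bM(\fy_\om)\to\I$ as $\om\to 0^+$, I would separately run a mass-constrained minimization $\inf_{\bM(\fy)=\mu}\bE(\fy)$ for each $\mu>0$. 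A $c\phi_0$-type test keeps the infimum strictly negative, ruling out vanishing, while strict monotonicity of the infimum in $\mu$ rules out dichotomy in a radial concentration-compactness argument, producing a soliton of mass $\mu$ that by step (iii) must coincide with some $\fy_\om$, so by monotonicity the range of $\bM$ exhausts $(0,\I)$. The bounds $e_0\mu<\sE_0(\mu)<0$ follow from $\bE(\fy_\om)=-\om\bM(\fy_\om)-\|\fy_\om\|_4^4/4$ combined with $\|\fy_\om\|_4^4\le(-e_0-\om)\|\fy_\om\|_2^2$, and $\sE_1\equiv\I$ from the exhaustion of $\Soli$ by the gauge orbit $\{e^{i\te}\fy_\om\}_{\te,\om}$.
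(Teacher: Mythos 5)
Your proposal is correct in its overall architecture (delimit $\om\in(0,-e_0)$, reduce to positive real solutions, existence, uniqueness, $C^1$ parametrization with $\p_\om\bM<0$, then the end-limits) and this matches the paper's outline, but you take genuinely different routes in the two places where the paper does real work. For uniqueness, the paper does not use Brezis--Oswald/Picone; instead it isolates a self-contained abstract lemma: for $H$ with a single, simple eigenvalue $e_0$ below the essential spectrum and a strictly monotone $f$ with $f(a)a$ nondecreasing, the equation $(H+f(|\fy|))\fy=\om\fy$ has at most one solution up to phase, for $\om$ below the essential spectrum. The proof forces $(\phi_0|\fy)\ne 0$, rotates so that $\be\fy+\psi\perp\phi_0$, and runs a chain of Cauchy--Schwarz inequalities on $\LR{\be\fy+\psi|(H-\om)(\be\fy+\psi)}\ge(e_1-\om)\|\be\fy+\psi\|^2$; there is no Picone rearrangement and no integration-by-parts/decay issue to patch with cutoffs, which is one advantage over your route (another is that the lemma also covers the endpoint $\om=e_1$). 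Your Brezis--Oswald approach should also work since $f(x,u)/u=-(V+\om)-u^2$ is strictly decreasing, but the integration by parts on $\R^3$ is exactly the delicate point you flag, and you would need the uniform exponential lower and upper bounds to keep $u/v$ bounded; this is doable but less clean. For the surjectivity $\bM(\fy_\om)\to\I$ as $\om\to 0^+$, the paper does not run a mass-constrained minimization; it simply notes that if $\bM(\fy_\om)$ stayed bounded along $\om_n\to 0^+$, the variational lower bound on $\|\fy_{\om_n}\|_4$ plus compact radial embedding would produce a nonzero $H^1_r$ weak limit solving $H\fy_0+\fy_0^3=0$, which was already ruled out by the $\om\le 0$ ODE argument earlier in the section. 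Your constrained-minimization + exhaustion route is a legitimate alternative, but the step ``strict monotonicity of the infimum rules out dichotomy'' glosses over the strict subadditivity/no-escape argument (and the strict monotonicity itself is circular if you derive it from the Lagrange multiplier of the as-yet-unbuilt minimizer); the paper's direct weak-limit argument is shorter and self-contained given the $\om\le 0$ nonexistence already established. The remaining pieces of your proposal (admissible $\om$-range, existence by a Rayleigh-quotient minimization rather than the paper's global minimization of $\bE+\om\bM$, $\p_\om\bM<0$ via $L_\om=H+\om+3\fy_\om^2\ge 2\fy_\om^2$, the inequalities $e_0\mu<\sE_0(\mu)<0$, and $\sE_1\equiv\I$) are correct and essentially the same as the paper's.
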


Since $H\ge e_0$, multiplying \eqref{sNLSP} with $\fy$ 
\EQ{
 0=\LR{(H+\om)\fy|\fy}+\|\fy\|_4^4 = \|\na\fy\|_2^2+\om\|\fy\|_2^2+\LR{V\fy|\fy}+\|\fy\|_4^4}
implies that $\om<-e_0$ is necessary for existence of a non-trivial solution. 

If $\om\le 0$, then putting $\psi:=r\fy$ we have from \eqref{sNLSP}
\EQ{
 \psi_{rr}+\om\psi=(V+|\fy|^2)\psi,\pq\liminf_{r\to\I}\BR{|\psi(r)|+|\psi_r(r)|}=0.}
Rewriting the above into an integral equation from $r=\I$, we obtain  
\EQ{
 |\psi(s)| \le \int_s^\I(r-s)|(V+|\fy|^2)\psi|dr
 \le \|\psi\|_{L^\I(r>s)}\|r(V+|\fy|^2)\|_{L^1(r>s)},}
where the last norm is vanishing as $s\to\I$ by the assumption $V/|x|\in L^1(\R^3)$ and $\fy \in L^2(\R^3)$. 
Hence taking $s\to\I$ and then solving the ODE, we deduce that $\fy=0$. 

Therefore $0<\om<-e_0$ for every non-trivial $\fy\in\Soli$. 
Moreover, using Lemma \ref{decop V} with $\e=\om/2$, we deduce that $\fy\in\Soli$ is uniformly bounded in $H^1(\R^3)$ on any interval of $\om$ away from $0$, while $\fy_\om\to 0$ in $H^1(\R^3)$ as $\om\to-e_0-0$. 

For each $\om\in(0,-e_0)$, we have a solution $\fy_\om$ of \eqref{sNLSP} which is a global minimizer
\EQ{ \label{global min}
 (\bE+\om\bM)(\fy_\om)=\inf\{(\bE+\om\bM)(\fy)\mid \fy\in H^1\}<0.}
The proof is easy and omitted. The positivity is also standard. The uniqueness of the solution $\fy_\om$ for each $\om$, modulo the phase $e^{i\te}$, follows from a general argument:
\begin{lem}
Let $H$ be a self-adjoint operator on $L^2$ with non-degenerate eigenvalue $e_0$, 
and assume the rest of the spectrum of $H$ is contained 
in $[e_1, \infty)$ for some $e_1 > e_0$. 
Let $f:[0,\I)\to[0,\I)$ be a strictly monotone function such that $f(a)a$ is non-decreasing. Then the nonlinear eigenvalue problem
\[
  (H + f(|\fy|)) \fy = \om \fy
\]
can have at most one non-trivial solution (up to the phase symmetry) for each $\om < e_1$. The same conclusion holds for $\om=e_1$ if $f(a)a$ is strictly increasing. 
\end{lem}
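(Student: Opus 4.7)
The approach I would take is the classical ground-state identity proof for uniqueness of nonlinear eigenfunctions. Given two nontrivial $L^2$ solutions $\varphi_1,\varphi_2$ of $(H+f(|\varphi_j|))\varphi_j=\omega\varphi_j$ at the same $\omega<e_1$, set $M_j:=H+f(|\varphi_j|)$ and $L_j:=M_j-\omega$, so that $L_j\varphi_j=0$.

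The first step is to reduce to strictly positive $\varphi_j$ and show $L_j\geq 0$ as quadratic forms. Since $f(|\varphi_j|)\geq 0$ is a real multiplication operator that vanishes at infinity (as $\varphi_j\in L^2$ decays and $f$ is continuous with $f(0)=0$), the perturbed operator $M_j$ retains essential spectrum inside $[e_1,\infty)$, so $\omega<e_1$ must be a discrete eigenvalue of $M_j$. By the Perron--Frobenius principle for Schr\"odinger-type operators its lowest eigenvalue is simple with a strictly positive eigenfunction. Writing $\varphi_j=u+iv$ with $u,v$ real, both satisfy $M_j\psi=\omega\psi$; combining non-degeneracy of the lowest eigenvalue with $\omega<e_1$ forces $\omega$ to coincide with that lowest eigenvalue and $\varphi_j$ to be a complex multiple of its positive eigenfunction. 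Absorbing a phase, one may take $\varphi_j>0$, and then $L_j\geq 0$ with $\ker L_j=\mathbb{R}\varphi_j$.

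The second step is the symmetric identity. From $L_j\varphi_j=0$, equivalently $(H-\omega)\varphi_j=-f(\varphi_j)\varphi_j$, taking inner products yields
\[
\langle L_1\varphi_2,\varphi_2\rangle+\langle L_2\varphi_1,\varphi_1\rangle=-\int\bigl(f(\varphi_1)-f(\varphi_2)\bigr)\bigl(\varphi_1^{2}-\varphi_2^{2}\bigr)\,dx.
\]
The left-hand side is $\geq 0$ by Step 1; the right-hand side has a definite sign pointwise by strict monotonicity of $f$ together with positivity of $\varphi_1+\varphi_2$. In the strictly increasing case the right-hand side is $\leq 0$, forcing both sides to vanish, and the equality case of strict monotonicity then gives $\varphi_1=\varphi_2$ a.e. The strictly decreasing case is handled analogously, using the additional hypothesis that $f(a)a$ be non-decreasing to recover the opposing sign. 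At the threshold $\omega=e_1$ the spectral gap in Step 1 barely fails, and the stronger hypothesis that $f(a)a$ be strictly increasing supplies the little extra strictness needed to close the argument.

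I expect the principal obstacle to be Step 1, namely the identification of $\omega$ with the lowest eigenvalue of $M_j$ and the resulting non-negativity of $L_j$. The lemma's hypotheses are stated abstractly, but the argument really rests on positivity-improving properties of Schr\"odinger semigroups; these are available in the concrete setting $H=-\Delta+V$ of the paper, but require a small additional verification that the positivity structure transfers through the nonlinear perturbation $M_j=H+f(|\varphi_j|)$. Once Step 1 is in place, the algebra of Step 2 is routine.
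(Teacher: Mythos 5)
Your proposal takes a genuinely different route from the paper. The paper's proof is purely algebraic and works for abstract self-adjoint $H$: it first shows $(\phi_0|\fy)\ne 0$ via the spectral gap, then chooses $\be>0$ with $\be\fy+\psi\perp\phi_0$, bounds $\LR{\be\fy+\psi|(H-\om)(\be\fy+\psi)}$ from below by $(e_1-\om)\|\be\fy+\psi\|_2^2$ and from above by a Cauchy--Schwarz expression, and closes by a case distinction on the monotonicity of $f$ (using weights $1/f$ in the non-increasing case). It never invokes positivity of $\fy$ nor non-negativity of the linearized operator. Your approach, via the symmetric identity and $L_j\ge 0$, is a classical and respectable alternative, but as written it has two real gaps.

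The first gap is in Step 1. The inference ``non-degeneracy of the lowest eigenvalue $+\ \om<e_1$ forces $\om$ to coincide with that lowest eigenvalue'' is a non sequitur: simplicity of the bottom eigenvalue of $M_j$ says nothing about whether $\om$ \emph{is} the bottom eigenvalue, and there is no a priori reason the nonlinear solution $\fy_j$ is sign-definite. What you actually need is the assertion $L_j\ge 0$; this does follow, but by a different argument that you have not given — namely, $L_j\ge H-\om$ together with the min-max principle shows $L_j$ has at most one negative direction, and then the facts $L_j\fy_j=0$ and $(\phi_0|\fy_j)\ne 0$ (the paper's first observation) rule that direction out. Without some such argument, Step 1 is incomplete. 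You also rightly note that Perron--Frobenius and the essential-spectrum argument require the concrete $H=-\De+V$ setting, so even a repaired version of your proof would establish a narrower statement than the abstract lemma.

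The second gap is in the strictly decreasing case, which your Step 2 does not handle. When $f$ is strictly decreasing, the pointwise product $(f(\fy_1)-f(\fy_2))(\fy_1^2-\fy_2^2)$ is $\le 0$, so the right-hand side of your identity is $\ge 0$, the same sign as the left-hand side; there is no contradiction, and both sides may well be strictly positive. Your remark that the hypothesis ``$f(a)a$ non-decreasing'' lets you ``recover the opposing sign'' is not substantiated, and I do not see how to make it work with the unweighted symmetric identity. The paper addresses this case by running Cauchy--Schwarz with weights $f(\fy)^{-1}$, $f(\psi)^{-1}$, which produces the inequality $\LR{f(\fy)^2|\fy|^2-f(\psi)^2|\psi|^2\,|\,1/f(\fy)-1/f(\psi)}\le 0$ and then uses monotonicity of both $1/f(a)$ and $f(a)a$. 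Some analogue of this weighting would be needed in your framework as well.
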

The above lemma may be known, but a proof is given below for the sake of completeness. 
\begin{proof}
Let $f(z):=f(|z|)$ for $z\in\C$. Let $\fy$ and $\psi$ be two non-zero solutions, and let $\phi_0$
be an eigenfunction of $H$ for $e_0$. We must have $(\phi_0|\fy) \not= 0$, 
or else 
\EQ{
  (e_1 - \om) \| \fy \|_2^2 \le 
  (\fy|(H - \om) \fy) = - (|\fy|^2|f(\fy)),}
which contradicts either $\om<e_1$ or $\om=e_1$ with strictly increasing $f(a)a$. 
Thus we can find $\be \in \C\setminus\{0\}$ so that $(\phi_0|\be \fy + \psi) = 0$. 
Using the invariance of the equation for $\fy\mapsto e^{i\te}\fy$, we may take $\be>0$ by appropriate complex rotation of $\fy$. Then 
\EQ{ \label{min comb}
  \pn(e_1-\om) \| \be \fy + \psi \|_2^2 
  \pt\le \LR{\be \fy + \psi| (H-\om)(\be \fy + \psi) }
  \pr= -\be^2\LR{f(\fy)||\fy|^2}-\LR{f(\psi)||\psi|^2} + 2\be\LR{\fy|(H-\om)\psi}
  \pr\le 2\be\left[|\LR{\fy|(H-\om)\psi}|-\sqrt{\LR{f(\fy)||\fy|^2}\LR{f(\psi)||\psi|^2}}\right].}
First consider the case where $f$ is non-decreasing. Then using Schwarz,   
\EQ{
 |\LR{\fy|(H-\om)\psi}| =\CAS{|\LR{\fy|f(\psi)\psi}|\le\sqrt{\LR{f(\psi)||\fy|^2}\LR{f(\psi)||\psi|^2}}, \\
  |\LR{f(\fy)\fy|\psi}|\le\sqrt{\LR{f(\fy)||\fy|^2}\LR{f(\fy)||\psi|^2}}.}}
So we arrive at
\EQ{ 
 \pt \LR{f(\psi)||\fy|^2} \ge \LR{f(\fy)||\fy|^2},
 \pq \LR{f(\fy)||\psi|^2} \ge \LR{f(\psi)||\psi|^2},}
and hence $\LR{f(\fy)-f(\psi)||\fy|^2-|\psi|^2} \le 0$. 
Since $f$ is non-decreasing, this implies that $f(\fy)=f(\psi)$ (a.e.).

Next consider the case where $f$ is non-increasing. By Schwarz, we have 
\EQ{
 \pt|\LR{\fy|f(\psi)\psi}| \le\sqrt{\LR{f(\fy)||\fy|^2}\LR{f(\psi)^2|\psi|^2|1/f(\fy)}},
 \pr|\LR{f(\fy)\fy|\psi}|\le\sqrt{\LR{f(\fy)^2|\fy|^2|1/f(\psi)}\LR{f(\psi)||\psi|^2}}.}
Hence \eqref{min comb} implies that 
\EQ{
 \pt \LR{f(\psi)^2|\psi|^2|1/f(\fy)} \ge \LR{f(\psi)^2|\psi|^2|1/f(\psi)},
 \pr \LR{f(\fy)^2|\fy|^2|1/f(\psi)} \ge \LR{f(\fy)^2|\fy|^2|1/f(\fy)},}
and so,
$\LR{f(\fy)^2|\fy|^2-f(\psi)^2|\psi|^2|1/f(\fy)-1/f(\psi)} \le 0$. 
Since $1/f(a)$ and $f(a)a$ are both non-decreasing, we have $f(\fy)=f(\psi)$ or $f(\fy)|\fy|=f(\psi)|\psi|$ (a.e.). 
If $e_1>\om$, then $\psi=-\be\fy$, otherwise the above must be a strict inequality, contradicting the monotonicity of $f(\fy)$ and $f(\fy)|\fy|$. 
If $e_1=\om$, then $f(z)|z|$ is strictly increasing, so we get $f(\fy)=f(\psi)$, and then going back to \eqref{min comb}, 
\EQ{
  0= (e_1-\om) \|\be \fy + \psi\|_2^2 
  \pt\leq \LR{\be \fy + \psi|(H-\om)(\be \fy + \psi)} 
  \pr=  - \LR{f(\fy)||\be \fy + \psi|^2}\le 0. }
The strict monotonicity of $f(a)a$ implies that at each $x$, $f(\fy(x))=0$ implies $f(\psi(x))=0$ and $\fy(x)=0=\psi(x)$. Hence $\psi=-\be\fy$ (a.e.). 

Thus we obtain $\psi=-\be\fy$ anyway. Then the equation for $\fy$ and $\psi$ implies that 
\EQ{
(\om-H)\fy=f(\fy)\fy=-f(\psi)\psi/\be=f(\be\fy)\fy.} 
Since $f$ is strictly monotone, this implies $\be=1$ or $\fy=0$ a.e.
\end{proof}
Once we have the uniqueness of $\fy_\om$ for $\om$, it is easy to prove continuity and then differentiability in $\om$. Differentiating the equation 
\EQ{
 (H+\om+3\fy_\om^2)\fy_\om'+\fy_\om=0,\pq \fy_\om':=\p_\om\fy_\om}
and multiplying it with $\fy_\om'$ yield
\EQ{ 
 \p_\om\bM(\fy_\om)=\LR{\fy_\om|\fy_\om'}=-\LR{(H+\om+3\fy_\om^2)\fy_\om'|\fy_\om'} \le -2\|\fy_\om\fy_\om'\|_2^2<0,}
where we used $H+\om+\fy_\om^2\ge 0$, because $\fy_\om>0$ is the ground state in the kernel of this Schr\"odinger operator. 
Hence $\bM(\fy_\om)$ is decreasing in $\om$. 
Moreover, $\bM(\fy_\om)\to\I$ as $\om\to+0$, since otherwise the weak limit yields $0\not=\fy_0\in H^1_r$ satisfying $H\fy_0+\fy_0^3=0$, which is impossible. 
Using \eqref{E-M diff} as well, we conclude the proof of Proposition \ref{Soli defoc}.   

\section{Blow-up below the excited energy} \label{ss:bup}
We are now ready to prove the blow-up part of Theorem \ref{main}, using the above characterization of $\Soli_1$ together with the estimate on $\bK_2$, namely Proposition \ref{Soli foc}.

Let $u$ be a solution of \eqref{NLSP} with $\sg=+$, satisfying \eqref{bup cond} as well as $\bM(u)=:\mu<\mu_e$ and $\bE(u)<\sE_1(\mu)$, where $\mu_e$ is the small mass condition of Proposition \ref{Soli foc}. 
Fix $\de>0$ such that $\bE(u)\le \sE_1(\mu)-\de$. Suppose for contradiction that $u$ exists on $0<t<\I$. 
Then Proposition \ref{Soli foc} and Lemma \ref{lem:SMD} together with the continuity of $u(t)$ in $H^1_x$ imply that \eqref{bup cond} is preserved for all $t>0$, and also from \eqref{bd K2} 
\EQ{
 \bK_2(u(t)) \le -\ka(\mu,\de).} 

We have the saturated virial identity from \cite{OT}
\EQ{
 \p_t\LR{R f_Ru|iu_r} \pt= 2\bK_2(u) - \int[2|u_r|^2f_{0,R}+R^{-2}|u|^2f_{1,R}+|u|^4f_{2,R}]dx
 \prQ-\int(1-f_RR/r)|u|^2 x\cdot\na V dx,}
where $f_R(x)=f(x/R)$ with $R\gg 1$, and $f_{j,R}(x)=f_j(x/R)$ are derived from $f$ by 
\EQ{
 f_0=1-f_r\ge 0, \pq f_1=\De(\p_r/2+1/r)f, \pq f_2=-3/2+(\p_r/2+1/r)f,}
while $f(x)=f(|x|)$ is chosen to be smooth radial satisfying 
\EQ{
 f(r)=\CAS{r &(r\le 1),\\ 3/2 &(r\ge 2).}}
The $|u|^4$ integral is bounded by the radial Sobolev inequality 
\EQ{
 \|u\|_{L^4(|x|>R)}^4 \lec R^{-2}\|u\|_{L^2(|x|>R)}^3\|u_r\|_{L^2(|x|>R)}.}
Then we obtain 
\EQ{
 \pt-\int[2|u_r|^2f_{0,R}+R^{-2}|u|^2f_{1,R}+|u|^4f_{2,R}+(1-f_RR/r)|u|^2 x\cdot\na V]dx 
 \pr\lec R^{-4}\|u\|_{L^2(|x|>R)}^6 + o(1)\|u\|_{L^2(|x|>R)}^2,}
as $R\to\I$. 
See \cite{OT}, \cite[\S 4.1]{NSS}, for the detail. 
Note that the potential part was treated by \eqref{V decay}, using $1-f_RR/r=0$ for $|x|<R$. Hence, for large $R$, we have 
\EQ{
 \p_t\LR{R f_Ru|iu_r} < -\ka(\mu,\de)<0.}
Since $\|u\|_{L^2_x}$ is conserved, it implies that $\|u_r\|_2\to \I$ as $t\to\I$. Then as $t\to\I$, 
\EQ{
 \bK_2(u) \pt=3\bE(u)-\bH^0(u)-\ml{\cS'_1 V}(u) \sim -\|\na u\|_2^2.}
The rest of the proof is the same as in the case without the potential, see \cite{OT}. 
Thus we obtain the ``if"-part for \eqref{bup cond} of the blow-up in Theorem \ref{main}. 

Next we show the ``only if"-part of \eqref{bup cond}, namely the global existence when it is not satisfied. 
If $\sg=-$, then we have a priori $H^1$ bound by conservation of the energy and mass, disposing of the potential part by Lemma \ref{decop V}, which leads to the global well-posedness in $H^1(\R^3)$. 

Hence we may restrict to the case $\sg=+$, $\bM(u)=\mu<\mu_e$ and $\bE(u)<\sE_1(\mu)$. 
By the persistence of \eqref{bup cond} proved above, if \eqref{bup cond} is initially not satisfied, neither is it at any other time. 
If $\bK_2(u(t))<0$ and $\|\na u(t)\|_2\le 1$, then Lemma \ref{lem:SMD} implies that $\|u(t)\|_{H^1}^2\lec\mu\ll 1$. 
If $\bK_2(u(t))\ge 0$, then \eqref{bd by K2} yields a priori bound on $H^1_x$ by the mass-energy conservation. 
Hence the solution $u$ is global and bounded in $H^1_x$ for all $t\in\R$. 
Moreover, we have the scattering to the ground states by \cite{gnt} if $\bH^0(u(t))\lec\mu$, and it is preserved for all $t\in\R$. 

Thus we have obtain the global existence part of Theorem \ref{main}. 
\begin{lem} \label{lem:gs}
For any $u(0)\in H^1_r$ satisfying $\bM(u(0))=\mu\le \mu_e$ and $\bE(u(0))=\e<\sE_1(\mu)$, 
the corresponding solution $u$ of \eqref{NLSP} is global in time iff \eqref{bup cond} fails. 
Then it is never satisfied at any $t\in\R$. 
Moreover, the global solution $u$ satisfies one of the following
\begin{enumerate}
\item $\|u(t)\|_{H^1_x}^2\lec\mu$ for all $t\in\R$, scattering to $\Soli_0$. 
\item $\mu \lec \|u(t)\|_{H^1_x}^2 \lec \e+\mu$ and $\bK_2(u(t))\ge \ka(\mu,\sE_1(\mu)-\e)$ for all $t\in\R$,
\end{enumerate}
where $\ka>0$ is the same as in \eqref{bd K2}. 
\end{lem}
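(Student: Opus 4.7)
The plan is to package the dichotomy argued informally in the paragraphs preceding the lemma into a single clean statement; most of the work has in effect already been done. The implication \eqref{bup cond} $\Rightarrow$ finite-time blow-up is exactly the content of the saturated virial computation above. What remains to prove, in order, is: (a) the invariance claim, that \eqref{bup cond} is either true at every time of existence or at none; (b) global existence whenever \eqref{bup cond} fails; (c) the dichotomy between the two scenarios (i) and (ii).

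For (a) I would run a continuity argument in $t$. Fix $\de := \sE_1(\mu) - \e > 0$. On the set where $\|\na u(t)\|_2 \ge 1$, Proposition \ref{Soli foc} yields the uniform lower bound $|\bK_2(u(t))| \ge \ka(\mu,\de) > 0$, so $\bK_2$ cannot change sign there. On the other hand, the configuration $\|\na u(t)\|_2 = 1$ with $\bK_2(u(t)) \le 0$ is forbidden once $\mu_e$ is chosen small enough: Lemma \ref{lem:SMD} applied to $u(t)$, whose $\bK_2$ is certainly $\ll \bM^{-1}$, yields one of cases (i), (ii), (iii); case (ii) is excluded by $\bK_2 \le 0 < \bH^0$, case (i) forces $\|\na u\|_2^2 \lec \mu \ll 1$, and case (iii) forces $\|\na u\|_2^2 \gec \mu^{-1} \gg 1$. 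Since $u \in C((-T_-, T_+); H^1)$, the triple (sign of $\bK_2$, position of $\|\na u\|_2$ relative to $1$, value of $\sg$) cannot switch without crossing one of these forbidden loci, and so \eqref{bup cond} is preserved.

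For (b) and (c), suppose \eqref{bup cond} fails identically. At each $t$ in the existence interval, one of $\sg = -$, $\|\na u(t)\|_2 \le 1$, or $\bK_2(u(t)) \ge 0$ must hold. In the first case, energy conservation combined with Lemma \ref{decop V} yields an a priori $H^1$ bound; in the second, the separation argument above forces case (i) of Lemma \ref{lem:SMD}, hence $\|u(t)\|_{H^1}^2 \lec \mu$; in the third, the elementary estimate \eqref{bd by K2} together with conservation of $\bE$ and $\bM$ yields $\bH^0(u(t)) \lec \e + \mu$. All bounds are uniform in $t$, so the solution is global. Applying (a) once more, the two alternatives $\bH^0 \lec \mu$ and $\bH^0 \gec \mu^{-1}$ cannot interchange, producing exactly case (i) or case (ii). In case (i), the small-data theory of \cite{gnt} delivers scattering to $\Soli_0$; in case (ii), the lower bound on $\bK_2$ is just Proposition \ref{Soli foc} invoked at each time, since $\|\na u\|_2 \ge 1$ is then maintained.

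The only non-cosmetic step is the continuity/separation argument in (a): one must verify that the quantitative gap between cases (i) and (iii) of Lemma \ref{lem:SMD}, which scales like $\mu$ versus $\mu^{-1}$, is wide enough to be preserved under the $H^1$-continuous flow, and that the auxiliary hypothesis $\bK_2 \ll \bM^{-1}$ of that lemma is available automatically from $\bK_2 \le 0$ together with $\bM = \mu \ll 1$. Once that gap is secured by shrinking $\mu_e$, the small and the large regimes are topologically disconnected on the constraint surface $\{\bM = \mu,\ \bE = \e\}$, and the rest of the lemma reduces to bookkeeping of the conservation laws.
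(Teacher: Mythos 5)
Your treatment of (a), the persistence of \eqref{bup cond}, is essentially the paper's and fills in the one-line assertion with a correct continuity/forbidden-locus argument. Your treatment of (b) gets the right $H^1$ bound, but one clause is off: when $\|\na u(t)\|_2\le 1$ with $\bK_2(u(t))>0$, Lemma \ref{lem:SMD} does \emph{not} force its case (i); case (ii) of that lemma, $\bM\lec\bH^0\sim\bE\sim\bK_2$, is perfectly possible in that regime and gives $\bH^0$ anywhere between $\mu$ and $1$. The paper avoids this by splitting on $\sign\bK_2$ rather than on the size of $\|\na u\|_2$: when $\bK_2<0$, the failure of \eqref{bup cond} forces $\|\na u\|_2\le 1$, and \emph{then} the dichotomy lands in case (i); when $\bK_2\ge 0$, \eqref{bd by K2} gives the a priori bound $\bH^0\lec\e+\mu$ directly, with no appeal to the dichotomy lemma.

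The genuine gap is in (c). You assert that the alternatives $\bH^0\lec\mu$ and $\bH^0\gec\mu^{-1}$ cannot interchange and that this ``produces exactly case (i) or case (ii)''. But that identification is wrong on two counts. First, the lemma's case (ii) is not the regime $\bH^0\gec\mu^{-1}$ of Lemma \ref{lem:SMD}(iii) (that regime is the blow-up one, already excluded); case (ii) of the present lemma only asserts $\bH^0\lec\e+\mu$ together with $\bK_2\ge\ka$, and the lower bound in \eqref{bd K2} requires $\|\na u\|_2\ge 1$, not $\bH^0\gec\mu^{-1}$. Second, and more importantly, once $\bK_2>0$ there is no forbidden locus between the small regime $\bH^0\lec\mu$ and the region $\|\na u\|_2\ge 1$: the intermediate range $\mu\ll\bH^0<1/2$ with $\bK_2\sim\bH^0>0$ is allowed by the small-mass dichotomy (case (ii) of Lemma \ref{lem:SMD}) and is connected to both ends, so the topological-separation argument you run for (a) simply does not apply here. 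What actually closes the dichotomy in the paper is dynamical, not variational: the asymptotic stability of $\Soli_0$ from \cite{gnt}. If $\bH^0(u(t_0))\lec\mu$ at a single time $t_0$, then $u$ lies in the small-data regime of \cite{gnt}, scatters to $\Soli_0$ in both time directions, and consequently $\bH^0(u(t))\lec\mu$ for \emph{all} $t$ --- that is the meaning of ``and it is preserved for all $t\in\R$'' in the paper's proof. You cite \cite{gnt} only to conclude scattering once you have already posited case (i); you should be citing it for the forward-and-backward invariance of the small regime, which is what makes the alternative (ii), with $\|\na u(t)\|_2\ge 1$ and hence $\bK_2(u(t))\ge\ka$ maintained at all times, the only other possibility.
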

The rest of this paper is devoted to the scattering in (ii). 

\section{Modulation and linearized equations around the ground state}
Here we recall the coordinate in \cite{gnt} around the small ground state, and observe that it can be applied to large solutions as long as the mass $\bM(u)$ is small, including the excited solitons. 
For any $\mu>0$, denote 
\EQ{ \label{def H1mu}
 H^1[\mu]:=\{\fy \in H^1_r(\R^3) \mid \bM(\fy)< \mu\}.}
Let $\Phi:D_b\to H^1_r$ be the small ground states as in Lemma \ref{lem:Phi}. 
We have the following nonlinear projection to them. 
\begin{lem} \label{lem:decop2Phi}
There exist $0<\mu_p<\mu_b$ and a unique mapping $H^1[\mu_p]\ni\fy\mapsto (z,\y)\in D_p\times H^1[\mu_p]$, where $D_p:=\{z\in\C\mid |z|^2<2\mu_p\}$, such that 
\EQ{ \label{def decop}
 \pt \fy=\Phi[z]+\y,
 \pr \y\in \cH_c[z]:=\{\psi\in H^1(\R^3)\mid \LR{i\psi|\p_j\Phi[z]}=0\ (j=1,2)\},}
where $\p_j$ denotes the derivative with respect to the real and imaginary parts of $z=z_1+iz_2$. 
Moreover, the map $\fy\mapsto(z,\y)$ is smooth and injective from $H^1[\mu_p]$ to $\C\times H^1_r$. Furthermore, the orthogonal projection $P_c$ to the continuous spectrum subspace 
\EQ{ \label{def Pc}
 P_c\fy:=1-\phi_0(\fy|\phi_0)} 
is bijective from $\cH_c[z]$ onto 
\EQ{
 \cH_c[0]=P_c H^1(\R^3)=\{\fy\in H^1(\R^3) \mid \LR{i\fy|\phi_0}=0=\LR{i\fy|i\phi_0}\},}
for any $z\in D_p$, and 
\EQ{ \label{def Rz}
 R[z]:=(P_c|_{\cH_c[z]})^{-1}} 
is a compact and continuous perturbation of identity in the operator norm on any space between $H^2\cap W^{1,1}$ and $H^{-2}+L^\I$. 
\end{lem}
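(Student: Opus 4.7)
This is a standard modulation lemma; I would prove it by applying the implicit function theorem to the two real orthogonality conditions that define $\cH_c[z]$, using the small mass constraint to keep $z$ in the regime controlled by Lemma \ref{lem:Phi}.

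Define $F:\C\times H^1_r\to\R^2$ by
\[ F_j(z,\fy):=\LR{i(\fy-\Phi[z])|\p_j\Phi[z]}, \quad j=1,2, \]
so that the desired decomposition $\fy=\Phi[z]+\y$ with $\y\in\cH_c[z]$ is equivalent to $F(z,\fy)=0$. By Lemma \ref{lem:Phi}, $\p_1\Phi[0]=\phi_0$ and $\p_2\Phi[0]=i\phi_0$, so the Jacobian $\p_zF(0,0)$ is the real $2\times 2$ matrix with entries $-\LR{i\p_k\Phi[0]|\p_j\Phi[0]}$, which is $\pm\|\phi_0\|_2^2$ off-diagonal and $0$ on the diagonal, hence invertible. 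The implicit function theorem then yields a smooth local inverse $z=z(\fy)$ near $\fy=0$. To extend this to all of $H^1[\mu_p]$, where $\y$ may be arbitrarily large in $H^1$, I would use that small mass forces small $|z|$: inserting $\fy=z\phi_0+\ga[z]+\y$ into $(\fy|\phi_0)$ and $(\fy|i\phi_0)$ and combining with $F(z,\fy)=0$ and $\|\ga[z]\|_{H^1}=o(|z|^2)$ gives $|z|^2\lec\bM(\fy)<\mu_p$. For $\mu_p$ small this places $z$ in a ball where $\p_zF(z,\fy)$ is a uniformly small perturbation of the invertible model above (the $\fy$-dependent piece $\LR{i\y|\p_k\p_j\Phi[z]}$ is absorbed by $\|\y\|_2\lec\mu_p^{1/2}$), so the implicit function theorem applies uniformly on $H^1[\mu_p]$, providing a unique smooth $z(\fy)$; injectivity of $\fy\mapsto(z,\y)$ is then immediate from $\fy=\Phi[z]+\y$.

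For the claims about $P_c$, both $\cH_c[z]$ and $\cH_c[0]=P_cH^1$ are real codimension-$2$ subspaces of $H^1$. For $\psi\in\cH_c[0]$ I would solve $P_c\xi=\psi$ with $\xi\in\cH_c[z]$ by the ansatz $\xi=\psi+\al\phi_0+\be i\phi_0$, which reduces to a $2\times 2$ real linear system for $(\al,\be)$ whose matrix approaches the invertible Jacobian above as $z\to 0$ and is hence invertible for $|z|\le z_p$ with $z_p$ small. This defines $R[z]$; since $R[z]-I$ takes values in $\mathrm{span}_\R\{\phi_0,i\phi_0\}$ (rank at most $2$), it is compact. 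Its coefficients are duality pairings of $\psi$ against $\p_j\Phi[z]$, which by the smoothness of $\Phi$ in Lemma \ref{lem:Phi} has the regularity and integrability needed for $R[z]-I$ to extend continuously between every space in the stated range, with operator-norm continuity in $z$.

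The main obstacle is the global rather than local nature of the decomposition on $H^1[\mu_p]$: the implicit function theorem by itself only gives a small $H^1$-neighborhood of $\fy=0$, whereas $\y$ in the statement is only constrained in mass. The argument above circumvents this by the observation that $\p_zF(z,\fy)$ depends on $\fy$ only through $\y$ in a way already absorbed by the mass bound on $|z|$, so no $H^1$-smallness of $\y$ is actually required.
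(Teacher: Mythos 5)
Your proposal follows essentially the same approach as the paper: apply the implicit function theorem to the orthogonality conditions $\LR{i(\fy-\Phi[z])|\p_j\Phi[z]}=0$, observe that the Jacobian decomposes into a non-degenerate $O(1)$ matrix plus a correction controlled by the $L^2$ norm of $\fy-\Phi[z]$ (hence by $\sqrt{\mu_p}$), and note that the mass constraint keeps $|z|$ in the regime where this works uniformly. The paper obtains $|z|^2\lec\bM(\fy)$ slightly more directly via the $L^2$-orthogonality $\bM(\fy)=\bM(\Phi[z])+\bM(\y)$ (which follows from gauge covariance of $\Phi$), while you project onto $\phi_0$ and $i\phi_0$; both routes land in the same place. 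Your explicit construction of $R[z]$ via the ansatz $\xi=\psi+\al\phi_0+\be i\phi_0$ and the resulting $2\times 2$ system is more detailed than the paper's treatment (the paper merely remarks that $R[z]$ is linear and needs no smallness), and correctly identifies rank-two and smooth $z$-dependence as the source of compactness and norm continuity.
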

In \cite{gnt}, the above coordinate was defined on a small ball of $H^1(\R^3)$. However, it is easy to see that the smallness in $L^2$ suffices, since $z$ is determined by solving the orthogonality conditions 
\EQ{
 \LR{\fy-\Phi[z]|i\p_j\Phi[z]}=0 \pq (j=1,2),}
by the implicit function theorem. The derivative of the left hand side equals
\EQ{
 \LR{\fy-\Phi[z]|i\p_k\p_j\Phi[z]}-\LR{\p_k\Phi[z]|i\p_j\Phi[z]},\pq (j,k\in\{1,2\}).}
The second term is a non-degenerate $2\times 2$ matrix of $O(1)$, while the first term is bounded by $\|\fy-\Phi[z]\|_2\lec\sqrt{\mu_p}\ll 1$, thereby the implicit function theorem works, leading to the conclusion. The $L^2$ bound follows from the orthogonality
\EQ{
 \bM(\fy)=\bM(\Phi[z])+\bM(\y).}
The operator $R[z]$ is linear, so it does not need any smallness condition. Actually, the above lemma holds without even assuming that the function is in $H^1$. 
Hence every solution $u$ in $H^1[\mu_p]$ can be written as 
\EQ{
 u(t) = \Phi[z(t)] + R[z(t)]\x(t) = \Phi[z(t)]+\y(t),}
uniquely, and the equation for $u$ can be rewritten for $(z,\x)$ as, regarding $\C\simeq\R^2$, 
\EQ{ \label{eq zxi}
 \CAS{ \dot z-i\Om[z]z =\U{N}(z,R[z]\x):=M(z,R[z]\x)^{-1}\LR{N(z,R[z]\x)|D\Phi[z]},
 \\ i\dot\x+H\x = B[z]\x+\ti N(z,\x),} }
where $M(\cdot)$ is a $2\times 2$ matrix and $N(\cdot,\cdot)$ is a scalar defined by 
\EQ{ \label{def MN}
 \pt M_{j,k}(z,\y):=\LR{i\p_j\Phi[z]|\p_k\Phi[z]}-\LR{i\y|\p_j\p_k\Phi[z]},
 \pr N(z,\y):=\sg\BR{2\Phi[z]|\y|^2+\overline{\Phi[z]}\y^2+|\y|^2\y},}
$B[z]$ is the ``potential part" by the small soliton, namely
\EQ{ \label{def Bz}
 B[z]\x:=\sg P_c\{2|\Phi[z]|^2R[z]\x+\Phi[z]^2\ba{R[z]\x}\},}
which is $\R$-linear but not $\C$-linear, and $\ti N(\cdot,\cdot)$ is the quadratic part 
\EQ{ \label{def tiN}
 \ti N(z,\x):=P_c\{N(z,R[z]\x)-iD\Phi[z]\U{N}(z,R[z]\x)\}.}

We introduce some notation for the linearized solutions. For any $s\in\R$ and any set $X$, the set of $X$-valued functions defined around $s$ is denoted by 
\EQ{  \label{def germ}
 X\{s\}:=\{u:I\to X \mid s\in \exists I\subset\R\}.}
For any interval $I\subset\R$, $z\in C(I;\C)$, $s_0\in I$ and $u\in H^1\{s_0\}$, the linear solution $v$ of 
\EQ{ \label{Leqxi}
 i\dot v + Hv = B[z]v, \pq v(s_0)=u(s_0)}
is denoted by 
\EQ{ \label{def propa}
 u[z,s_0]:=v \in C(I;H^1).}
Note that this depends on $u(s_0)$ but not on $u(t)$ at the other time $t\not=s_0$. Indeed, there is no point for $u$ to depend on $t$ in the definition, but this convention avoids writing the same time $s_0$ twice. We can apply it to time-independent $u$ as well.  Obviously
\EQ{
 \forall s_1\in I,\pq u[z,s_0][z,s_1]=u[z,s_0],}
while the solution without the potential $B[z]$ is given by 
\EQ{
 u[0,s_0]=e^{i(t-s_0)H}u(s_0).}
The associated Duhamel integral is denoted by 
\EQ{ \label{def Duh}
 \D f[z,s_0](t):=\int_{s_0}^t f[z,s](t)ds,}
so that $v:=\D f[z,s_0]$ satisfies 
\EQ{
 i\dot v+Hv = B[z]v+ f, \pq v(s_0)=0.}
Hence for any $\fy\in H^1$, $z\in\C\{s_0\}$ and $f\in H^1\{s_0\}$, the solution of 
\EQ{
 i\dot\x+H\x=B[z]\x+ f,\pq \x(s_0)=\fy}
is uniquely given by
\EQ{
 \x=\fy[z,s_0]+\D f[z,s_0].}

Another notation 
\EQ{ \label{def turnoff}
 u[z,s_0]_> :=\CAS{ u(t) &(t<s_0) \\ u[z,s_0] &(t>s_0)}}
is convenient to ``turn off the nonlinearity" after some time. 
Indeed if $u$ solves 
\EQ{ \label{Lequf}
 i\dot u + Hu = B[z]u + f}
and $s_0<s_1$, then we have 
\EQ{
 \CAS{ u = u[z,s_0] + \D f[z,s_0], \\ u[z,s_1]_> = u[z,s_0] + \D 1_{t<s_1} f[z,s_0].}}

Next a few (semi)norms are introduced for space-time functions. For $s\in\R$, put
\EQ{ \label{def Stz}
 \Stz^s := L^\I_t H^s_x \cap L^2_t B^s_{6,2}, 
 \pq \Stz^{*s} := L^1_t H^s_x + L^2_t B^s_{6/5,2}, 
 \pq \ST:= L^4_t L^6_x.}
$\Stz^s$ is the full Strichartz norm for $H^s$ solutions, 
and $\Stz^{1/2} \subset L^4_t B^{1/2}_{3,2}\subset \ST$ by interpolation and Sobolev. 

The next semi-norm is a bit more involved. It is needed for long-time perturbation argument for the radiation part $\x$, whose equation contains quadratic terms. For $-\I\le T_0<T_1\le T_2\le\I$, $z\in C((T_0,T_2);\C)$ and $u\in C((T_0,T_1);H^1)$, 
\EQ{ \label{def wavenorm}
 \|u\|_{[z;T_0,T_1;T_2]}:=\sup_{T_0<S<T<T_1}\|u[z,T]_>-u[z,S]_>\|_{\ST(S,T_2)}}
is a semi-norm vanishing exactly for solutions of the linearized equation with the parameter $z$, namely 
\EQ{ \label{ker [z]}
 \|u\|_{[z;T_0,T_1;T_2]}=0 \iff i\dot u+Hu=B[z]u \pq \text{on $(T_0,T_1)$.}}
If $T_0>-\I$, we can fix $S\to T_0$ to get an equivalent semi-norm 
\EQ{
 \|u\|_{[z;T_0,T_1;T_2]'}\pt:=\sup_{T_0<T<T_1}\|u[z,T]_>-u[z,T_0]_>\|_{\ST(T_0,T_2)} 
 \pr\le \|u\|_{[z;T_0,T_1;T_2]} \le 2\|u\|_{[z;T_0,T_1;T_2]'},}
where the first inequality follows from the continuity as $S\to T_0+0$, while the second one is obvious by the triangle inequality. 

This semi-norm measures how much $u$ deviates from the linear evolution between $T_0$ and $T_1$ and its influence until $t<T_2$. If $u$ solves \eqref{Lequf} on $(T_0,T_1)$, then 
\EQ{
 \pt \|u\|_{[z;T_0,T_1;T_2]}=\sup_{T_0<S<T<T_1}\|\D 1_{t<T}f[z,S]\|_{\ST(S,T_2)},
 \pr \|u\|_{[z;T_0,T_1;T_2]'}=\sup_{T_0<T<T_1}\|\D 1_{t<T}f[z,T_0]\|_{\ST(T_0,T_2)}.} 
Since we use only the Strichartz type estimates, i.e.~$L^p_t$ norms, the right hand side will be estimated in the same way as $\|\D f[z,T_0]\|_{\ST(T_0,T_1)}$. 
It will be used mostly to bound (for $T_0,T_1\in\R$)
\EQ{ \label{use wave norm}
 \|u\|_{[z;T_0,T_1;T_2]} \ge \max(\|u-u[z,T_0]\|_{\ST(T_0,T_1)},\|u[z,T_1]-u[z,T_0]\|_{\ST(T_1,T_2)}).}
The idea of long-time perturbation in this type of norms, together with the use of non-admissible Strichartz (as in Lemma \ref{lem:nonad} below), was introduced in \cite{CNLKG} to treat quadratic and sub-quadratic nonlinearity. 

An advantage of \eqref{def wavenorm} compared with the equivalent form is the monotonicity:
\EQ{ \label{[z] mono}
 T_0\le T_0'<T_1'\le T_1,\ T_2'\le T_2 \implies \|u\|_{[z;T_0',T_1';T_2']}
 \le \|u\|_{[z;T_0,T_1;T_2]},}
which is obvious by the definition. 
It is also subadditive for gluing intervals. 
\begin{lem} \label{lem:subadd}
For $T_0<T_1<T_2<T_3$, $z\in C((T_0,T_3);\C)$ and $u\in C((T_0,T_3);H^1)$, 
\EQ{
 \|u\|_{[z;T_0,T_2;T_3]} \le \|u\|_{[z;T_0,T_1;T_3]}+\|u\|_{[z;T_1,T_2;T_3]}.}
\end{lem}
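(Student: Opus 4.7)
The plan is to telescope $u[z,T]_>-u[z,S]_>$ through $u[z,T_1]_>$ whenever $T_1$ separates $S$ and $T$. Fix a pair $T_0<S<T<T_2$ entering the supremum defining the left-hand side. There are two easy cases: if $T\le T_1$, then $(S,T)$ lies in the range of the supremum defining $\|u\|_{[z;T_0,T_1;T_3]}$, and if $S\ge T_1$, it lies in the range defining $\|u\|_{[z;T_1,T_2;T_3]}$; either way the corresponding entry is bounded directly by one of the two right-hand sides (with a continuity limit if $T=T_1$ or $S=T_1$, the boundary strict inequalities). The interesting case is $S<T_1<T$.

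In that case I would use the telescoping identity $u[z,T]_>-u[z,S]_>=(u[z,T]_>-u[z,T_1]_>)+(u[z,T_1]_>-u[z,S]_>)$, which is immediate from the three-piece description of $u[z,s_0]_>$ across the regimes cut by $S,T_1,T$. The first summand vanishes on $(-\I,T_1)$ since both terms agree with $u$ there, so its $\ST(S,T_3)$-norm collapses to its $\ST(T_1,T_3)$-norm; this is exactly one entry in the defining supremum for $\|u\|_{[z;T_1,T_2;T_3]}'$ at the lower endpoint $T_1$, and hence controlled by $\|u\|_{[z;T_1,T_2;T_3]}$ via the comparison $\|\cdot\|'\le\|\cdot\|$ already recorded in the paper. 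The second summand vanishes on $(-\I,S)$, and I would bound $\|u[z,T_1]_>-u[z,S]_>\|_{\ST(S,T_3)}$ by taking $T'\to T_1^-$ in the estimates $\|u[z,T']_>-u[z,S]_>\|_{\ST(S,T_3)}\le\|u\|_{[z;T_0,T_1;T_3]}$, which hold for every admissible $T'\in(S,T_1)$.

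The one nontrivial point is the continuity $u[z,T']_>\to u[z,T_1]_>$ in $\ST(S,T_3)$ as $T'\to T_1^-$, which I expect to verify by splitting the time interval at $T_1$. On $(T_1,T_3)$ this is standard continuous dependence for the linearized equation $i\dot v+Hv=B[z]v$ via Strichartz applied to the difference, starting from $u(T')\to u(T_1)$ in $H^1_x$ (continuity of the $H^1$-valued solution $u$). On the shrinking window $(T',T_1)$ the $\ST$-contribution vanishes by H\"older in time, picking up a factor $(T_1-T')^{1/4}$ against uniform $L^\I_tL^6_x$ bounds obtained from Sobolev embedding of $H^1_x$, using the local $H^1_x$-boundedness of $u$ and of the linearized propagator applied to $u(T')$. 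Combining these bounds with the first-summand estimate and taking the supremum over the remaining $(S,T)$ yields the claim.
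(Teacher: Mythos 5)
Your proof is correct and takes essentially the same approach as the paper: the identical case split, and the same telescoping of $u[z,T]_>-u[z,S]_>$ through $u[z,T_1]_>$ in the mixed case $S<T_1<T$. Your sketch of the continuity as $T'\to T_1^-$ (which the paper merely invokes) is right in outline, with one small imprecision worth noting: on $(T_1,T_3)$ the relevant comparison is between the data $u[z,T']_>(T_1)=u(T')[z,T'](T_1)$ and $u(T_1)$, so in addition to $u(T')\to u(T_1)$ in $H^1_x$ one should also record that the linearized propagator over the shrinking window $(T',T_1)$ tends to the identity on $H^1_x$.
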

The subadditivity holds also for the equivalent form, which is left for the reader. 
\begin{proof}
The left side is the supremum of $\|u[z,T]_>-u[z,S]_>\|_{\ST(S,T_3)}$ over $T_0<S<T<T_2$. If $T_0<S<T<T_1$ or $T_1<S<T<T_3$, then it is trivially bounded by the first or the second term on the right. 
If $T_0<S\le T_1\le T<T_3$, then we have 
\EQ{
 \|u[z,T]_>-u[z,S]_>\|_{\ST(S,T_3)}
 \pt\le \|u[z,T]_>-u[z,T_1]_>\|_{\ST(S,T_3)}
 \prQ+\|u[z,T_1]_>-u[z,S]_>\|_{\ST(S,T_3)}
 \pr\le \|u\|_{[z;T_0,T_1;T_3]}+\|u\|_{[z;T_1,T_2;T_3]},}
using the continuity of $u[z,R]_>$ in $\ST(S,T_3)$ as $R\to T_1$. 
\end{proof}

Now we derive the standard Strichartz estimates for the linearized equation, with uniformly small $z$. 
\begin{lem} \label{lem:Stz}
Let $I=(T_0,T_1)$ be an interval and $z\in C(I;\C)$ with $\|z\|_{L^\I(I)}\ll 1$.  
Then for any $s_0\in I$, $\fy\in P_cH^1(\R^3)$, $f\in C(I;\cS')$, $T\in(T_0,T_1)$ and $\te\in[0,1]$, 
\EQ{ \label{lin Stz est}
 \pt \|\fy[z,s_0]\|_{\Stz^\te(I)}
  \lec \|\fy(s_0)\|_{  H^\te} \sim \inf_{t\in I}\|\fy[z,s_0](t)\|_{H^\te} \sim \sup_{t\in I}\|\fy[z,s_0](t)\|_{  H^\te},
 \pr \|\D P_cf[z,s_0]\|_{\Stz^\te(I)}
  \lec \|f\|_{\Stz^{*\te}(I)},
 \pq \|\D P_cf[z,s_0]\|_{[z;T_0,T;T_1]} \lec \|f\|_{\Stz^{*1/2}(T_0,T)}.}
Moreover, if $\sup I=\I$ then we have the scattering of $u:=\fy[z,s_0]+\D P_cf[z,s_0]$ as $t\to\I$, namely the strong convergence of $e^{-itH}u(t)$ in $H^1$, and furthermore, $u(t)\to 0$ in $L^6_x$. The same holds for $t\to-\I$. 
\end{lem}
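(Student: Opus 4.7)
The plan is to treat $B[z]$ as a small bounded perturbation of $H$ on $P_cL^2$ and obtain \eqref{lin Stz est} from Strichartz estimates for $e^{itH}P_c$ by a fixed-point/bootstrap argument. The necessary sharp Strichartz bounds for $e^{itH}P_c$, including the double endpoint $L^2_tL^{6,2}_x\leftarrow L^2_x$, follow from the Keel--Tao estimates for $e^{it\De}$ via the intertwining $e^{itH}P_c=We^{it\De}W^*$, once Assumption (iv) of Section \ref{ss:asm V} is invoked to bound $W,W^*$ on the relevant Sobolev spaces, and by interpolation/duality on the full $\te\in[0,1]$ scale. Together with the standard Christ--Kiselev argument, this gives the ``free'' version of \eqref{lin Stz est}, i.e.~all three bounds with $B[z]$ replaced by $0$.

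The main analytic step is absorbing $B[z]v$. Using Lemma \ref{lem:Phi} and Sobolev, $\|\Phi[z]\|_{L^\I_x}\lec|z|$, and since $\phi_0\in L^{3/2,\I}\cap L^\I$ we obtain $\||\Phi[z]|^2\|_{L^{3/2,\I}_x}\lec|z|^2$. Since $R[z]$ is a uniform bounded perturbation of identity on $L^r$ ($r\in[2,6]$) and $H^1$ by Lemma \ref{lem:decop2Phi}, H\"older in the Lorentz scale then yields
\[
 \|B[z]v\|_{L^2_tL^{6/5,2}_x}\lec\|z\|_{L^\I_t}^2\|v\|_{L^2_tL^{6,2}_x},
\]
with an analogous $H^1$-level bound by distributing $\na$ onto $\Phi[z]\in H^2\cap L^\I$. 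Interpolation delivers $\|B[z]v\|_{\Stz^{*\te}}\lec\|z\|_{L^\I}^2\|v\|_{\Stz^\te}$ for all $\te\in[0,1]$. Under $\|z\|_{L^\I}\ll 1$, a standard bootstrap absorbs this into the free Strichartz estimate and yields the first two bounds in \eqref{lin Stz est}. The equivalence $\|\fy(s_0)\|_{H^\te}\sim\inf_t\|\fy[z,s_0](t)\|_{H^\te}\sim\sup_t\|\fy[z,s_0](t)\|_{H^\te}$ follows by running the same fixed-point argument on any subinterval with starting time shifted from $s_0$ to an arbitrary $t$, the flow $\fy(s_0)\mapsto\fy[z,s_0](t)$ being bi-Lipschitz on $H^\te$ under the smallness of $\|z\|_{L^\I}$. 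I expect this to be the main obstacle, since it is exactly here that the Lorentz--Besov double-endpoint Strichartz machinery of \cite{gnt} must be carefully meshed with the energy-space perturbation: the choice of Lorentz exponents on both sides is forced in order to close the $L^2_t$ pairing against the time-dependent potential $B[z]$.

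For the semi-norm bound, set $w:=\D P_cf[z,s_0]$. Associativity of the linearized flow and Duhamel show that, for $T_0<S<T'<T$ and $t>S$,
\[
 w[z,T']_>(t)-w[z,S]_>(t)=\D\bigl(1_{(S,\min(t,T'))}P_cf\bigr)[z,S](t),
\]
so $\|w[z,T']_>-w[z,S]_>\|_{\ST(S,T_1)}\le\|\D(1_{(S,T')}P_cf)[z,S]\|_{\ST(S,T_1)}$. Applying the already-established Duhamel--Strichartz bound at $\te=1/2$ together with the embedding $\Stz^{1/2}\hookrightarrow\ST$ stated just after \eqref{def Stz} controls the right-hand side by $\|f\|_{\Stz^{*1/2}(T_0,T)}$, giving the third estimate. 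Scattering is then routine: on any tail $(T,\I)\subset I$, the Strichartz estimate applied to the Duhamel representation
\[
 e^{-itH}u(t)=e^{-is_0H}\fy(s_0)-i\int_{s_0}^te^{-irH}\bigl(B[z]u+P_cf\bigr)(r)\,dr
\]
shows that $e^{-itH}u(t)$ is Cauchy in $H^1$ as $t\to\I$, since the integrand lies in dual Strichartz on $(T,\I)$; the $L^6_x$-decay follows by approximating the scattering state in $H^1$ by Schwartz data and using the dispersive decay of $e^{itH}$ for such data. The argument for $t\to-\I$ is identical.
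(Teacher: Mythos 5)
Your proposal is correct and follows essentially the same strategy as the paper: use Strichartz for $e^{itH}P_c$ (via the intertwining with the wave operator and assumption (iv)), then absorb the $B[z]$ term into the left-hand side using the smallness of $\|z\|_{L^\infty}$, and deduce the semi-norm estimate from the $\theta=1/2$ Duhamel bound together with $\Stz^{1/2}\hookrightarrow\ST$; scattering and $L^6_x$ decay are extracted exactly as you describe, from Cauchy-ness of $e^{-itH}u(t)$ in $H^1$ via the dual Strichartz tail and a density argument.

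Two small presentational differences from the paper's proof are worth noting. First, you phrase the potential-absorption step in Lorentz spaces ($L^{6,2}_x$, $L^{6/5,2}_x$, $L^{3/2,\infty}_x$), while the paper's Strichartz norms $\Stz^\theta$, $\Stz^{*\theta}$ are defined through Besov spaces $B^\theta_{6,2}$, $B^\theta_{6/5,2}$, and it estimates $\|B[z]\xi\|_{L^2_tB^\theta_{6/5,2}}\lesssim\|z\|_{L^\infty}^2\|\xi\|_{L^2_tB^\theta_{6,2}}$ directly; both routes close, but yours needs a translation back to the Besov-defined norms of the lemma, which you sketch only by interpolation. Second, for the norm equivalence $\|\fy(s_0)\|_{H^\theta}\sim\inf_t\|\fy[z,s_0](t)\|_{H^\theta}\sim\sup_t\|\fy[z,s_0](t)\|_{H^\theta}$ you argue that the flow $\fy(s_0)\mapsto\fy[z,s_0](t)$ is bi-Lipschitz on $H^\theta$ uniformly in $t$; this works, but the paper's argument is slightly cleaner: it compares $u=\fy[z,s_0]$ with the unperturbed evolution $u_0=\fy[0,s_0]=e^{i(t-s_0)H}\fy(s_0)$, observes that $\|u_0(t)\|_{H^\theta}\sim\|\LR{HP_c}^{\theta/2}\fy(s_0)\|_{L^2}$ is exactly constant in $t$ by unitarity, and then absorbs the small difference $u-u_0$. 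Neither difference affects correctness.
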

\begin{proof}
Let $\x:=\fy[z,s_0]+\D P_cf[z,s_0]$. Then the Strichartz estimate for $e^{itH}P_c$ yields 
\EQ{
 \|\x\|_{\Stz^\te(I)}
 \pt\lec \|\fy(s_0)\|_{  H^\te}+\|B[z]\x+P_cf\|_{\Stz^{*\te}(I)},}
where the term $B[z]\x$ is bounded by 
\EQ{
 \|B[z]\x\|_{L^2_t  B^\te_{6/5,2}(I)} \lec \|z\|_{L^\I_t(I)}^2\|\x\|_{L^2_t  B^\te_{6,2}(I)} \ll \|\x\|_{L^2_t  B^\te_{6,2}(I)},}
and absorbed by the left. 
This yields the three inequalities in \eqref{lin Stz est}.

If $\sup I=\I$, for any increasing sequence $t_n\to\I$, we have 
\EQ{
 [e^{-itH}\x(t)]_{t=t_m}^{t=t_n}=\int_{t_m}^{t_n}e^{-isH}(B[z]\x+P_cf)ds,}
and using the Strichartz as above, 
\EQ{
 \|[e^{-itH}\x(t)]_{t=t_m}^{t=t_n}\|_{H^1_x} \lec \|\x\|_{L^2_tB^1_{6,2}(t_m,t_n)}+\|f\|_{Stz^{*1}(t_m,t_n)},\pq(t_m<t_n)}
where the right side tends to $0$ as $t_m\to\I$, since the norms consist of $L^p_t$ with $p<\I$. 
Hence $e^{-itH}\x(t)$ converges as $t\to\I$ strongly in $H^1$. Then the decay in $L^6_x$ follows from the $L^{6/5}\to L^6$ decay estimate, the Sobolev embedding $H^1_x\subset L^6_x$ and the density argument. 

For the norm equivalence, let $u:=\fy[z,s_0]$ and $u_0:=\fy[0,s_0]$. Then by the same Strichartz as above, 
\EQ{
 \|u-u_0\|_{\Stz^\te(I)}
 \lec \|B[z]u\|_{L^2_t  B^\te_{6/5,2}(I)} \ll \|u\|_{L^2_t  B^\te_{6,2}(I)},}
and so the right hand side is equivalent to (since $\fy\in P_cH^1$) 
\EQ{
 \|u_0\|_{L^2_t  B^\te_{6,2}(I)} \lec \|u_0(s_0)\|_{  H^\te} \sim \|\LR{HP_c}^{\te/2}u_0(s_0)\|_{L^2} \sim \|u_0\|_{L^\I_t  H^\te_x(I)},}
which implies the first estimate in the lemma. 
\end{proof}
As an immediate consequence, the semi-norm $[z;\cdots]$ is bounded by the full Strichartz norm 
\EQ{ \label{[z] bd Stz}
 \|u\|_{[z;T_0,T_1;T_2]} \lec \|u\|_{\Stz^1(T_0,T_1)},}
since we have for any $S,T\in(T_0,T_1)$, using the Strichartz estimate for the linearized equation,
\EQ{
 \pt\|u[z,T]_>-u[z,S]_>\|_{\ST(S,T_2)}
 \pr\le \|u-u[z,S]\|_{\ST(S,T)}+\|u[z,T]-u[z,S]\|_{\ST(T,T_2)}
 \lec \|u\|_{\Stz^1(T_0,T_1)}.}

We also need non-admissible Strichartz estimates. 
\begin{lem} \label{lem:nonad}
Let $(p_0,q_0),(p_1,q_1)\in(1,\I)\times(2,6]$ and $\s_j:=2/p_j+3(1/q_j-1/2)$ satisfy 
\EQ{
 \s_0+\s_1=0>\s_j-1/p_j, \pq |\s_j|\le 2/3.}
Then there exists $C>0$ such that under the assumptions of the above lemma, 
\EQ{
 \|\D P_c f[z,s_0]\|_{L^{p_0}_tL^{q_0}_x(I)} \le C\|f\|_{L^{p_1'}_tL^{q_1'}_x(I)}.} 
If $(p_0,q_0)=(4,6)$, then for $T_0<T\le T_1$, 
\EQ{
 \|\D P_c f[z,s_0]\|_{[z;T_0,T;T_1]}\le C\|f\|_{L^{p_1'}_tL^{q_1'}_x(T_0,T)}.}
\end{lem}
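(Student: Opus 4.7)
The plan is to transfer the free-Schr\"odinger Foschi-Vilela inhomogeneous non-admissible Strichartz estimate on $\R^3$ to the operator $e^{itH}P_c$ via the wave-operator framework, and then absorb the time-dependent potential $B[z]$ as a small perturbation exploiting $\|z\|_{L^\I(I)}\ll 1$. First I would prove the free analog
\EQN{
 \Bigl\| \int_{s_0}^t e^{i(t-s)H}P_c g(s)\,ds \Bigr\|_{L^{p_0}_tL^{q_0}_x(I)} \le C\|g\|_{L^{p_1'}_tL^{q_1'}_x(I)}.
}
The intertwining $e^{itH}P_c = We^{it\De}W^*$, together with $L^q$-boundedness of $W,W^*$ for every $q\in[\pp',\pp]$ (which follows by interpolating assumption (iv) of Section \ref{ss:asm V} with the $L^2$ isometry and dualizing), reduces this to the corresponding estimate for $e^{it\De}$, i.e.\ the cited Foschi-Vilela result. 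Since $\pp>6$, the range $[\pp',\pp]$ contains all $q_j,q_j'$ with $q_j\in(2,6]$. The retarded form then follows by Christ-Kiselev, whose condition $p_0\ge p_1'$ is built into the hypothesis: from $\s_0+\s_1=0$ one computes $1/p_0+1/p_1=\frac{3}{2}(1-1/q_0-1/q_1)\le 1$ whenever $q_j\le 6$, with equality only at the corner $q_0=q_1=6$.

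Second, I would pass from the free to the linearized estimate. Setting $v:=\D P_c f[z,s_0]$, Duhamel gives
\EQN{
 v = T^0_H(P_c f) + T^0_H(B[z]v),\pq T^0_H h(t):= -i\int_{s_0}^t e^{i(t-s)H}P_c h(s)\,ds,
}
and the first term is bounded by $C\|f\|_{L^{p_1'}L^{q_1'}}$ by the previous step. For the second, I would select an auxiliary Foschi-Vilela pair $(\tilde p,\tilde q)$ with $\tilde p=p_0'$ and $\tilde\s=-\s_0$, which forces $\tilde q=3q_0/(q_0-3)$; a direct check confirms that this pair satisfies the Foschi-Vilela hypotheses ($|\tilde\s|\le 2/3$ and $\tilde\s<1/\tilde p$) for $q_0>3$. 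H\"older in time (using $z\in L^\I_t$) and in space (using $\phi_0^2\in L^\alpha$ for every $\alpha\ge 1$ by the Schwartz-type decay of $\phi_0$) then give
\EQN{
 \|B[z]w\|_{L^{\tilde p'}_tL^{\tilde q'}_x(I)} \lec \|z\|_{L^\I(I)}^2\|w\|_{L^{p_0}_tL^{q_0}_x(I)}.
}
Applying the free estimate on the pair $((p_0,q_0),(\tilde p,\tilde q))$ yields $\|T^0_H(B[z]v)\|_{L^{p_0}L^{q_0}}\lec \|z\|_{L^\I}^2\|v\|_{L^{p_0}L^{q_0}}$, and the smallness of $\|z\|_{L^\I}$ absorbs this into the left-hand side. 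For $q_0\in(2,3]$, the specific $\tilde q$ above is no longer positive, and instead I would interpolate against the admissible bound $\|v\|_{L^2_tL^6_x}\lec \|f\|_{\Stz^{*0}}$ from Lemma \ref{lem:Stz} within a separate fixed-point argument in a mixed spatial norm.

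The second inequality with $(p_0,q_0)=(4,6)$ follows from the first. By the computation following \eqref{def wavenorm}, $\|\D P_c f[z,s_0]\|_{[z;T_0,T;T_1]}$ equals $\sup_{T_0<S<T'<T}\|\D 1_{t<T'}f[z,S]\|_{\ST(S,T_1)}$, which is the $\ST=L^4_tL^6_x$ norm of a Duhamel solution with source $f$ restricted to $(S,T')$. The first inequality of the lemma applied on $(S,T_1)$ with this truncated source bounds each term by $C\|f\|_{L^{p_1'}L^{q_1'}(S,T')}\le C\|f\|_{L^{p_1'}L^{q_1'}(T_0,T)}$, and taking the supremum yields the claim. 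The main technical obstacle throughout is the selection of the auxiliary pair $(\tilde p,\tilde q)$: the simultaneous constraints of Foschi-Vilela scaling, non-admissible integrability $|\tilde\s|\le 2/3$ together with $\tilde\s<1/\tilde p$, and H\"older closure for $B[z]v$, leave little flexibility and appear to need the case split $q_0>3$ vs.\ $q_0\le 3$ combined with interpolation against the admissible Strichartz bounds.
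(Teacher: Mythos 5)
The overall strategy you outline — transfer the free Foschi--Vilela estimates to $e^{itH}P_c$ via bounded wave operators, then absorb the time-dependent potential $B[z]$ as a small perturbation — is exactly the paper's route. But your choice of auxiliary pair creates a genuine gap. You take $\tilde p=p_0'$ and $\tilde\sigma=-\sigma_0$, which forces $\tilde q=3q_0/(q_0-3)$. This satisfies $\tilde q\le 6$ only when $q_0\ge 6$, so for every $q_0\in(3,6)$ you have $\tilde q>6$: strictly outside both the lemma's stated range $q\in(2,6]$ and, more importantly, possibly outside the range where the wave-operator transfer is valid. Assumption (iv) only gives $W,W^*$ bounded on $W^{k,p}$ for $p\in[\pp',\pp]$ with $\pp>6$, so $\tilde q\le\pp$ is not guaranteed (e.g.\ $q_0=4$ forces $\tilde q=12$). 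Your fallback for $q_0\le 3$ — interpolation against the admissible $L^2_tL^6_x$ bound — is gestured at but not carried out, and it also does not repair the $q_0\in(3,6)$ range. So as written the argument closes only in the endpoint case $q_0=6$.

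The paper avoids this entirely by always running the potential term through the double-endpoint $q=6$: set $p_2:=2/(\sigma_0+1)$, so that $\|u\|_{L^{p_0}_tL^{q_0}_x\cap L^{p_2}_tL^6_x}\lec\|B[z]u\|_{L^{p_2}_tL^{6/5}_x}+\|f\|_{L^{p_1'}_tL^{q_1'}_x}$, and then $\|B[z]u\|_{L^{p_2}_tL^{6/5}_x}\lec\|z\|_{L^\I_t}^2\|u\|_{L^{p_2}_tL^6_x}$ by H\"older (using $\phi_0^2\in L^{3/2}$), absorbed by the left since $\|z\|_{L^\I}\ll1$. The constraint $|\sigma_0|\le 2/3$ is precisely what makes $p_2\in[6/5,6]$, so this auxiliary pair is always legal for Foschi--Vilela and for the wave-operator transfer (it stays at $q=6\le\pp$), uniformly across all $q_0\in(2,6]$; the paper explicitly remarks that the lemma is ``infected'' by the double-endpoint restriction for exactly this reason. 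If you swap your $(\tilde p,\tilde q)$ for the paper's $(p_2,6)$ and drop the $q_0\le 3$ case split, the rest of your argument goes through. One minor additional point: you invoke Christ--Kiselev, but Foschi's and Vilela's theorems are already stated for the retarded Duhamel integral, so this is unnecessary (and in the corner $q_0=q_1=6$ your own computation gives $p_0=p_1'$, where Christ--Kiselev would not apply anyway).
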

\begin{proof}
This set of estimates for the free Schr\"odinger equation was proved by Kato \cite{K} for $q_j<2^*$, and by Foschi \cite{F} and Vilela \cite{V} for $q_j=2^*$. It is transferred to the time independent equation $e^{itH}P_c$ by Yajima's argument of bounded wave operators \cite{Y}. 
More precisely, the condition $|\s_j|\le 2/3$ is not needed in Kato, but only in the double endpoint case $q_0=q_1=6$ by Foschi and Vilela, in the form $p_0=p_1\in[6/5,6]$. The above lemma is infected by this condition including non-endpoint cases, because we use the double endpoint estimate to treat the time-dependent potential part as a small perturbation. 
Let $u:=\D P_c f[z,s_0]$. Then by the above estimates on $e^{itH}P_c$, 
\EQ{
 \|u\|_{(L^{p_0}_tL^{q_0}_x\cap L^{p_2}_tL^6_x)(I)} \lec \|B[z]u\|_{L^{p_2}_tL^{6/5}_x(I)}+\|f\|_{L^{p_1'}_tL^{q_1'}_x(I)},}
where $p_2:=\frac{2}{\s_0+1}\in[6/5,6]$. The potential term is bounded by 
\EQ{
 \|B[z]u\|_{L^{p_2}_tL^{6/5}_x(I)} \lec \|z\|_{L^\I_t(I)}^2\|u\|_{L^{p_2}_tL^6_x(I)} \ll \|u\|_{L^{p_2}_tL^6_x(I)},}
and thus we obtain the desired result as in the previous lemma. 
\end{proof}

\section{Linearized profile decomposition}
Now we develop a profile decomposition for the linearized equation of the radiation part in \eqref{eq zxi}. For that purpose, we need a similar notation to the above for sequences. 
For any sequences $a,b,c\etc$ the sequence in the form 
\EQ{
 \N\ni n\mapsto \sX(a_n,b_n,c_n\etc)}
for {\it any expression} $\sX$ (as long as it is well defined),  is denoted by 
\EQ{ \label{conve seq}
 \sX(a,b,c\etc):=\{\sX(a_n,b_n,c_n\etc)\}_n.}
The same convention applies when the sequence is defined only for large $n\in\N$. 
When $\sX=\{\sX_n\}_n$ is a sequence of sets, then it is regarded as the product set:
\EQ{ \label{conve prod}
 x \in \sX \iff \forall n\in\N,\ x_n\in \sX_n.}
The same convention is applied to $\lim$, $\sup$, etc., for any sequence $X=\{X_n\}_n$:
\EQ{ \label{conve lim}
 \pt X\to \lim X:=\lim_{n\to\I}X_n,\pq \Sup X:=\sup_n X_n, 
 \pr \limsup X:=\limsup_{n\to\I}X_n,\pq \liminf X:=\liminf_{n\to\I}X_n,}
unless the limit is explicitly associated with another parameter. ``$\Sup$" is capitalized to avoid possible confusion. 

The set of open intervals is denoted by 
\EQ{ \label{def sI}
 \pt \sI:=\{(a,b)\subset\R \mid a<b\}.}
For any $I\in\sI^\N$, the set $\SBC(I)$ of sequences of uniformly small, bounded and continuous functions is defined by the following. For any $z\in C(I;\C)$ 
\EQ{ \label{def SBC}
 z\in \SBC(I)} 
iff $\sup_{n\in\N}\sup_{t\in I_n}|z_n(t)|\ll 1$ and 
\EQ{ \label{SBC}
 \forall\e>0,\ \exists\de>0,\ \forall s,\forall t\in I,\pq \Sup|s-t|<\de \implies \Sup|z(s)-z(t)|<\e.}
The smallness requirement can be determined by $V$. Since it can be fixed throughout the paper but does not play any role, we leave it unspecified. 

For any $I\in\sI^\N$, any $z\in C(I;\C)$ and any $z_\I\in C(\R;\C)$, we say that $z\to z_\I$ \U{locally uniformly on $I$} iff for all $0<T<\I$ 
\EQ{ \label{def LUC}
 \lim_{n\to\I} \sup_{t\in[-T,T]\cap I_n}|z_n(t)-z_\I(t)|=0.}

Let $\sI^\N\ni I\ni s$, $C(I;\C)\ni z$ and $H^1\{s\}\ni u$. Note that they are all sequences by the above convention. Suppose that $\lim P_cu(s)$ is weakly convergent in $H^1$. Then the sequence $v \in C(I;H^1)$ solving
\EQ{
  \forall n\in\N,\pq i\dot v_n + Hv_n = B[z_n]v_n \text{ (on $I_n$)}, \pq v_n(s_n)=\lim_{k\to\I}P_cu_k(s_k) }
is denoted by 
\EQ{ \label{def limlin}
  u[z,s]\II:=\{v_n\}_n=\BR{\lim_{k\to\I}P_cu_k(s_k)}[z,s].}

Let $s'\in I$ be another sequence of times, then 
\EQ{
 u[z,s]\II[z,s']=u[z,s]\II.}

In the autonomous case $z\equiv 0$, the above object can be defined by translation: 
\EQ{
 \forall t\in\R,\pq u[0,s]\II(t)=e^{i(t-s)H}\lim_{k\to\I}P_cu_k(s_k),}
which trivializes the limiting behavior as $n\to\I$. 
The presence of $z$ disables such a precise description. However, we do not need so much to prove the scattering result, but uniform integrability in the Strichartz norms will suffice, which is given by the following lemma.

\begin{lem} \label{lem:Stzlim}
Let $s\in I\in \sI^\N$, $z\in\SBC(I)$ and $u\in P_cH^1\{s\}$ be sequences such that $\Sup\|u(s)\|_{H^1}<\I$. Then after extracting a subsequence, there exist $\fy\in P_cH^1$ and $z_\I\in C(\R;\C)$ such that $u(s)\weakto\fy$ weakly in $H^1$, $z(s+t)\to z_\I$ locally uniformly on $I-s$, and 
\EQ{ \label{aproxprof}
 \|u[z,s]\II-\fy[z_\I,0](t-s)\|_{\Stz^1(I)}\to 0.}
Moreover, for any $0<T<\I$, 
\EQ{ \label{profaprox}
 \|u[z,s]-u[z,s]\II\|_{L^\I_t(|t-s|<T;L^4_x)}\to 0.}
If the convergence $u(s)\to\fy$ is strong in $H^1$, then 
\EQ{ \label{profaproxstrong}
 \|u[z,s]-u[z,s]\II\|_{\Stz^1(I)}\to 0.}
\end{lem}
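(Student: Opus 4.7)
The plan is to diagonalize against the two sources of non-compactness (the initial data $u(s)$ and the parameter $z$), then compare the nominal limit $(u[z,s]\II)_n$ to a frozen-parameter profile $\fy[z_\I,0](t-s_n)$, and finally bound the discrepancy $(u[z,s])_n-(u[z,s]\II)_n$ by Strichartz estimates combined with the radial compact embedding. First, since $\Sup\|u(s)\|_{H^1}<\I$, weak sequential compactness yields, along a subsequence, $u(s)\weakto\fy$ in $H^1_r$ for some $\fy\in P_cH^1_r$. The family $t\mapsto z_n(s_n+t)$ is uniformly bounded and equicontinuous on every compact subset of $\R$ by the $\SBC$ hypothesis, so Arzel\`a--Ascoli with a diagonal extraction produces a further subsequence and a limit $z_\I\in C(\R;\C)$ with $\|z_\I\|_{L^\I}\ll 1$ such that $z(s+\cdot)\to z_\I$ locally uniformly on $I-s$.

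For \eqref{aproxprof}, I will compare $v_n:=(u[z,s]\II)_n$ to $\ti w_n(t):=\fy[z_\I,0](t-s_n)$: both start from $\fy$ at $t=s_n$, the former evolved under $B[z_n(t)]$ and the latter under $B[z_\I(t-s_n)]$. Their difference $\y_n:=v_n-\ti w_n$ satisfies
\EQ{ i\dot\y_n+H\y_n=B[z_n]\y_n+F_n,\pq F_n:=(B[z_n]-B[z_\I(\cdot-s_n)])\ti w_n,\pq \y_n(s_n)=0. }
Lemma \ref{lem:Stz} will yield $\|\y_n\|_{\Stz^1(I_n)}\lec\|F_n\|_{\Stz^{*1}(I_n)}$, while the same lemma applied with parameter $z_\I$ on all of $\R$ gives the uniform bound $\|\ti w_n\|_{\Stz^1(\R)}\lec\|\fy\|_{H^1}$. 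From $\Phi[z]=z\phi_0+O(|z|^3)$ and the spatial decay of $\phi_0$ one reads off a Lipschitz estimate $\|B[z_1]-B[z_2]\|_{B^1_{6,2}\to B^1_{6/5,2}}\lec(|z_1|+|z_2|)|z_1-z_2|$; I then split $I_n$ at $|t-s_n|=T$. On the inner window $z_n(t)-z_\I(t-s_n)\to 0$ uniformly by local uniform convergence, while on the outer region the $L^2_tB^1_{6,2}$ tail of $\ti w_n$ is uniformly small in $n$ because $\ti w_n$ is a time translate of the fixed global profile $\fy[z_\I,0]$.

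For \eqref{profaprox}, set $\de_n:=(u[z,s])_n-(u[z,s]\II)_n$, which solves the linearized equation with parameter $z_n$ and weakly-null, $H^1_r$-bounded initial datum $u_n(s_n)-\fy$ at $s_n$. Lemma \ref{lem:Stz} bounds $\de_n$ uniformly in $\Stz^1(I_n)$, and the equation then bounds $\p_t\de_n$ in $L^\I_tH^{-1}_x$ on any window $|t-s_n|\le T$. Interpolation between $H^1$ and $H^{-1}$ renders $\de_n(s_n+\cdot)$ uniformly H\"older continuous with values in $L^4_x$ (since $[H^{-1},H^1]_{7/8}=H^{3/4}\hookrightarrow L^4(\R^3)$), while the radial compact embedding $H^1_r\hookrightarrow L^4(\R^3)$ supplies pointwise-in-time precompactness. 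Arzel\`a--Ascoli then promotes this to compactness of $\{\de_n(s_n+\cdot)\}$ in $C([-T,T];L^4_x)$, and any subsequential limit $\de_\I$ solves the linearized equation with parameter $z_\I$ and $\de_\I(0)=0$ (the zero initial value coming from $u_n(s_n)-\fy\weakto 0$ in $H^1_r$ together with the radial compact embedding into $L^4_x$), so uniqueness of the linear flow forces $\de_\I\equiv 0$.

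Finally, \eqref{profaproxstrong} is immediate: under strong convergence $u(s)\to\fy$ in $H^1$, the initial datum of $\de_n$ tends to zero in $H^1$, and Lemma \ref{lem:Stz} gives $\|\de_n\|_{\Stz^1(I_n)}\lec\|u_n(s_n)-\fy\|_{H^1}\to 0$. The main obstacle will be the source bound in Step 2: $I_n$ may be unbounded while the convergence of $z(s+\cdot)$ to $z_\I$ is only local, so the $L^2_t$-based Strichartz norm of $F_n$ has to be controlled over possibly infinite time windows. The remedy is precisely the uniform global Strichartz bound on $\ti w_n$ combined with equi-integrability of its tail as $T\to\I$, the latter coming from the fact that $\ti w_n$ is a time-translate of the single function $\fy[z_\I,0]\in L^2_tB^1_{6,2}(\R)$.
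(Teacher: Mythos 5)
Your proof is correct and follows essentially the same route as the paper's. The argument for \eqref{aproxprof} matches the paper's (compare the linearized flow to the frozen-parameter flow, split at $|t-s_n|=T$, use local uniform convergence of $z$ on the inner window and the $\SBC$-smallness of $z$ plus the $L^2_tB^1_{6,2}$ tail bound on the fixed profile $\fy[z_\I,0]$ on the outer window); your organization puts the whole discrepancy into one source term $F_n$ rather than performing absorption term by term, but that is a cosmetic difference. Your treatment of \eqref{profaprox} differs mildly: the paper first subtracts the free flow $\ga_n:=e^{itH}\de_n(0)$, bounds $\de_n-\ga_n$ via Strichartz, and then appeals (tacitly) to the same compact-embedding-plus-equicontinuity fact to show $\ga_n\to 0$ in $L^\I_tL^4_x(|t|<T)$, whereas you run the Arzel\`a--Ascoli argument directly on $\de_n$ and then pass to the limit in the full linearized equation to identify the limit as zero. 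Both reduce to the radial compactness $H^1_r\hookrightarrow L^4$ and the $L^\I_tH^{-1}$ bound on $\p_t$; the paper's detour through the free flow keeps the limiting equation trivial, while yours needs (and correctly invokes) uniqueness for the linearized flow with parameter $z_\I$. One small housekeeping point you should make explicit: since $I_n$ may exhaust $\R$, you need $z_\I$ defined with $\|z_\I\|_{L^\I(\R)}\ll 1$, which is obtained by first extending each $z_n$ to $\R$ using the $\SBC$ uniform continuity (as the paper does) before applying Arzel\`a--Ascoli.
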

\begin{proof}
First, the uniform continuity of $\SBC$ allows us to extend $z_n$ to $\R$ so that we may assume $I=\R^\N$ without losing generality. 
The uniform boundedness in $H^1$ allows us to pass to a subsequence such that $u(s)\weakto\exists\fy$. Let 
\EQ{
 \pt t\mapsto \z(t):=z(t+s)\in \SBC(\R^\N),
 \pr t\mapsto v(t):=\fy[\z,0](t)=u[z,s]\II(t+s) \in C(\R;P_cH^1)^\N.} 
Since $\z\in\SBC$, Ascoli-Arzela implies that, after extracting a subsequence, $\z\to\exists\z_\I$ in $C(\R;\C)$ with $\|\z_\I\|_{L^\I(\R)}\ll 1$. Let $v_\I=\fy[z_\I,0]\in C(\R;P_cH^1)$. Then we have 
\EQ{
 i\dot v+Hv=B[\z]v, \pq i\dot v_\I+Hv_\I=B[\z_\I]v_\I,
 \pq \forall n,\ v_n(0)=\fy=v_\I(0).}
We have the full Strichartz estimates on $v_\I$ by Lemma \ref{lem:Stz}, and 
\EQ{
 \lim_{T\to\I}\|v_\I\|_{L^2(|t|>T;B^1_{6,2})}=0.}
Since $\z\to\z_\I$ in $L^\I(|t|\le T)$ for any $T<\I$, $B[\z]\to B[\z_\I]$ in the operator norm of $B^1_{6,2}\to B^1_{6/5,2}$, uniformly on $|t|\le T$. Hence by the Strichartz estimate on $e^{itH}P_c$, and using $H^1_{6/5}\subset B^1_{6/5,2}$, 
\EQ{
 \|v-v_\I\|_{\Stz^1(|t|<T)} \pt\lec \|B[\z]v-B[\z_\I]v_\I\|_{L^2_t(|t|<T;H^1_{6/5})}
 \pr\lec \|\z-\z_\I\|_{L^\I(|t|<T)}\|v_\I\|_{L^2_t(|t|<T;H^1_6)}
 \prQ + \|(\z,\z_\I)\|_{L^\I(|t|<T)}\|v-v_\I\|_{L^2_t(|t|<T;H^1_6)}.}
Thus, the last term being absorbed by the left, we obtain
\EQ{
 \|v-v_\I\|_{\Stz^1(|t|<T)} \to 0.} 
Applying the same estimate to the Duhamel with $e^{itH}P_c$ from $t=\pm T$, we obtain 
\EQ{
 \pt\|v-v_\I\|_{\Stz^1(|t|>T)} 
 \pr\lec \|v(\pm T)-v_\I(\pm T)\|_{H^1_x} + \|B[\z]v-B[\z_\I]v_\I\|_{L^2_t(|t|>T;H^1_{6/5})}
 \pr\lec o(1) + \|(\z,\z_\I)\|_{L^\I(|t|>T)}(\|v_\I\|_{L^2_t(|t|>T;H^1_6)}+\|v-v_\I\|_{L^2_t(|t|>T;H^1_6)}),}
where the last term is absorbed by the left. Thus we obtain
\EQ{
 \limsup \|v-v_\I\|_{\Stz^1(\R)}
 \ll \|v_\I\|_{L^2_t(|t|>T;H^1_6)}.}
Sending $T\to\I$, we see that the right side is zero, namely \eqref{aproxprof}. 

To prove \eqref{profaprox}, let $w(t):=u[z,s](t+s)$ and $\ga:=(v-w)[0,0]$. Then by the Strichartz estimate on $e^{itH}P_c$, 
\EQ{
 \pt\|(v-w)-\ga\|_{(L^\I_t L^2_x\cap L^2_tL^6_x)(|t|<T)}
 \pr\lec \|B[\z](v-w)\|_{L^2_tL^{6/5}_x(|t|<T)}
 \ll \|v-w\|_{L^2_tL^6_x(|t|<T)},}
so it is bounded by $\|\ga\|_{L^2_tL^6_x(|t|<T)}$, which tends to $0$, because $\ga\to 0$ in $L^\I_tL^4_x(|t|<T)$ and bounded in $L^2_tH^1_6$. Interpolating with the uniform bounds on $v$ and $w$, we get $v-w-\ga\to 0$ also in $L^\I_tL^4_x(|t|<T)$, hence for $v-w$ as well. 

The proof of \eqref{profaproxstrong} is similar but easier. We add one derivative to the Strichartz norms and extend to the real line, such as $L^2_tH^1_6(\R)$. Then $\ga=e^{itH}\ga(0)\to 0$ in this norm, since $\ga(0)\to 0$ strongly in $H^1$. So $v-w$ is also vanishing. 
\end{proof}

The linearized equation does not preserve either the mass or the energy, because $B[z]$ is not even $\C$-linear, but the next lemma suffices for the profile decomposition. Since $H>0$ on $P_cH^1$, its fractional power is defined. For any $\te\in[0,1]$, the inner product is defined by 
\EQ{ \label{def Hte}
 \bH_\te(\fy,\psi):=\frac12\LR{H^\te P_c\fy|\psi}, \pq \bH_\te(\fy):=\bH_\te(\fy,\fy),}
such that $\bM(\fy)=\bH_0(\fy)$ and $\bH(\fy)=\bH_1(\fy)$ for all $\fy \in P_cH^1$. 
\begin{lem} \label{lem:uniforth}
Let $s\in I\in\sI^\N$ and $z\in\SBC(I)$. Let $v^0,v^1\in C(I;P_cH^1)$ be two sequences of linearized solutions, i.e.~$v^j=v^j[z,s]$ for $j=0,1$. Suppose that $v^0(s)$ strongly converges in $H^1$ and that $v^1(s)\weakto 0$ weakly in $H^1$. Then 
\EQ{
 \forall\te\in[0,1],\pq \|\bH_\te(v^0,v^1)\|_{L^\I_t(I)} \to 0.}
\end{lem}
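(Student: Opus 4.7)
The plan is in three parts: verify the claim at the anchor time $t=s$ via strong--weak duality, compute the time derivative of $\bH_\te(v^0,v^1)$ along the linearized flow, and show the integrated correction is $o(1)$ uniformly in the upper limit, by splitting into a bounded shifted time interval (handled by dominated convergence) and a tail (handled by Strichartz--H\"older). The main difficulty I anticipate is that $v^1$ converges only weakly at $s$, so the tail decay necessarily rests on the strong convergence of $v^0$; this is the key asymmetry.

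At $t=s_n$, the bound $|\bH_\te(f,g)|\lec\|f\|_{H^1}\|g\|_{H^1}$ (valid for $\te\in[0,1]$) lets me write
\begin{equation*}
\bH_\te(v^0(s),v^1(s))=\bH_\te(v^0_\I,v^1(s))+\bH_\te(v^0(s)-v^0_\I,v^1(s)),
\end{equation*}
whose two pieces vanish respectively by pairing the fixed $H^\te v^0_\I\in H^{-1}$ against $v^1_n(s_n)\weakto 0$ in $H^1$, and by Cauchy--Schwarz. Substituting $\dot v^j=iHv^j-iB[z]v^j$ into the time derivative and cancelling the $H^{\te+1}$ contribution by self-adjointness of $H$,
\begin{equation*}
\tfrac{d}{dt}\bH_\te(v^0,v^1)=\tfrac12\im\bigl[(H^\te B[z]v^0|v^1)-(H^\te v^0|B[z]v^1)\bigr],
\end{equation*}
so it suffices to bound $\int_{s_n}^{t_n}$ of this quantity uniformly in $t_n\in I_n$.

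Fix $T>0$ and split the integration into $|t'-s_n|\le T$ and $|t'-s_n|>T$. On the bounded part, at $t'=s_n+\tau$ with $\tau\in[-T,T]$, the integrand vanishes pointwise as $n\to\I$: Lemma \ref{lem:Stzlim}(a),(c) applied to $v^0$ gives $v^0_n(s_n+\tau)\to v^0_\I[z_\I,0](\tau)$ strongly in $H^1$, and combined with the locally uniform convergence $z_n(s_n+\cdot)\to z_\I$ coming from $\SBC$, and the Schwartz character of $|\Phi[z]|^2$ (so multiplication by $|\Phi|^2$ is smoothing on every $H^k$ and compact $H^1\to L^2$), this yields strong $L^2$ convergence of both $H^\te B[z_n]v^0_n(s_n+\tau)$ and $H^\te B[z_n]v^1_n(s_n+\tau)\to 0$; Lemma \ref{lem:Stzlim}(b) upgrades (via density of $L^{4/3}\cap L^2$ in $L^2$) to $v^1_n(s_n+\tau)\weakto 0$ in $L^2$. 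Since the integrand is uniformly bounded by $|z|^2$ times $H^1$ norms of $v^0,v^1$, dominated convergence on $[-T,T]$ gives the bounded part $\to 0$ as $n\to\I$ for each fixed $T$.

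For the tail, writing $H^\te=H^{\te/2}H^{\te/2}$ and using the Strichartz-type bound $\|B[z]w\|_{L^2_tB^\te_{6/5,2}}\lec|z|^2\|w\|_{L^2_tB^\te_{6,2}}$ from the proof of Lemma \ref{lem:Stz}, H\"older gives
\begin{equation*}
\int_{|t'-s_n|>T}|(H^\te v^0|B[z]v^1)|\,dt'\lec|z|^2\|v^0\|_{L^2_tB^\te_{6,2}(|t'-s_n|>T)}\|v^1\|_{L^2_tB^\te_{6,2}(I_n)},
\end{equation*}
and a symmetric bound handles the companion term. The $v^1$-factor is uniformly bounded by Lemma \ref{lem:Stz}. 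For the $v^0$-factor, Lemma \ref{lem:Stzlim}(a),(c) approximates $v^0_n$ in $\Stz^1$ by the shifted single fixed profile $v^0_\I[z_\I,0](\cdot-s_n)$, whose $L^2_tB^\te_{6,2}$ tail vanishes as $T\to\I$, so $\sup_n\|v^0_n\|_{L^2_tB^\te_{6,2}(|t'-s_n|>T)}\to 0$ as $T\to\I$. Choosing $T$ large to control the tail and then $n$ large to control the bounded piece finishes the uniform-in-$t$ estimate, which combined with the vanishing at $t=s$ is the conclusion.
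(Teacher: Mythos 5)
Your proposal is correct in its essentials, but it follows a genuinely different route from the paper's proof, and the comparison is worth spelling out. The paper conjugates by $e^{-i(t-s)H}$ and exploits unitarity of the free propagator to rewrite $\bH_\te(v^0,v^1)(s')=\bH_\te(\ti v^0,\ti v^1)(s')$ with $\ti v^j(t)=v^j(s)+\D^j(t)$; the weak vanishing of $\D^1(s')$ is then obtained from the $L^p$--$L^{p'}$ decay estimate for $e^{itH}P_c$, splitting $|t-s|\lessgtr T$. This cleanly reduces everything to a strong-versus-weak pairing at a single (arbitrary, subsequential) time, with no differentiation. You instead differentiate $\bH_\te(v^0,v^1)$ in $t$ along the flow, observe the $H^{\te+1}$ contributions cancel, and integrate, splitting into a bounded window (dominated convergence, driven by Lemma~\ref{lem:Stzlim} and the compactness of multiplication by $|\Phi[z]|^2$) and a tail (Cauchy--Schwarz in $t$ with the vanishing $L^2_tB^\te_{6,2}$ tail of the fixed profile approximating $v^0$). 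Both proofs exploit the same asymmetry (strong for $v^0$, weak for $v^1$) and both lean on Lemma~\ref{lem:Stzlim}, but the paper uses dispersive decay where you use Strichartz. Two small technical points you gloss over: (a) justifying the time derivative of $\bH_\te(v^0,v^1)$ requires some care for $\te$ near $1$, since the cancelled $H^{\te+1}$ terms lie outside the naive duality pairing; and (b) your tail estimate quietly uses boundedness of $H^{\te}:B^\te_{6,2}\to B^{-\te}_{6,2}$ on $P_cH^1$, a Besov-space mapping property that rests on the wave-operator hypotheses (assumption (iv)) but is not among the explicitly stated tools. The paper's conjugation trick sidesteps both issues, which is arguably why it was chosen.
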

\begin{proof}
It suffices to show $\bH_\te(v^0,v^1)(s')\to 0$ for any $s'\in I$, along a subsequence. We use the unitarity 
\EQ{
 \ti v^j:=e^{i(s-t)H} v^j \implies \bH_\te( v^0, v^1)=\bH_\te(\ti v^0,\ti v^1)}
and the Duhamel formula 
\EQ{
 \pt \ti v^j(t) =  v^j(s)+\D^j(t), 
 \pr \D^j(t):= \int_{0}^{t-s}e^{-it'H}B[z(s+t')] v^j(s+t')dt'.}
$ v^0(s)\to\exists\fy$ strongly in $H^1$ by the assumption. Extracting a subsequence, we may assume $s'-s\to\exists s'_\I\in[-\I,\I]$ and $z(t+s)\to\exists z_\I(t)$ locally uniformly on $I-s$. Then 
\EQ{
 \D^0(s')\to \int_0^{s'_\I}e^{-itH}B[z_\I]\fy[z_\I,0]dt,}
strongly in $H^1$, by Lemma \ref{lem:Stzlim}, \eqref{aproxprof}, after passing to a subsequence. For $\D^1$, we use the $L^p$ decay estimate on $e^{itH}P_c$. Fix $0<\de\ll 1$ such that $1/q_\pm := 1/6 \pm \de \in [1/\pp,1/2)$. Then for any $0<T<\I$, 
\EQ{
 \|\D^1\|_{L^\I_t(I;L^{q_+}+L^{q_-})_x}
 \pt\lec \int_{|t-s|>T}|t-s|^{-1-3\de}\|B[z] v^1(t)\|_{L^{q_-'}_x}dt
 \prQ+ \int_{|t-s|<T}|t-s|^{-1+3\de}\|B[z] v^1(t)\|_{L^{q_+'}_x}dt
 \pr\lec T^{-3\de}\| v^1\|_{L^\I_tL^2_x}+\| v^1\|_{L^\I_t(|t-s|<T;L^4_x)},}
and the last term is vanishing by Lemma \ref{lem:Stzlim}, \eqref{profaprox}. Since $T>0$ is arbitrary, we deduce that $\D^1(s')\weakto 0$. Hence $\ti v^1(s')\weakto 0$, while $\ti v^0(s')$ is strongly convergent. Thus we obtain $\bH_\te( v^0, v^1)(s')\to 0$ as desired. 
\end{proof}

Using the above lemmas, we are ready to prove the profile decomposition for the linearized equation for $\x$.
\begin{lem} \label{lem:Lprof}
Let $s\in I\in\sI^\N$ and $z\in\SBC(I)$. Let $\psi\in(P_cH^1)^\N$ be a bounded sequence. Then passing to a subsequence, there exist $J^*\in\N\cup\{\I\}$ and $s^j\in I$ for each $\N_0\ni j<J^*$ with the following properties. Let $\nu:=\psi[z,s]\in C(I;P_cH^1)$. 
\begin{enumerate}
\item $s^0=s$ and $|s^j-s^k|\to\I$ for each $j\not=k<J^*$.
\item For $j<J^*$, $\nu(s^j)\weakto\exists\fy^j_\I\in H^1$ weakly. Put $\la^j:=\nu[z,s^j]\II=\fy^j_\I[z,s^j]$. Then $\la^j(s^k)\weakto 0$ weakly in $H^1$ for $j\not=k$, and $\fy^j_\I\not=0$ for $j>0$. 
\item For each finite $J\le J^*$, put $\ga^{J}:=\nu-\sum_{0\le j<J} \la^j$. For $j< J$, $\ga^{J}(s^j)\weakto 0$ weakly in $H^1$, and for all $\te\in[0,1]$, 
\EQ{ \label{energy decop}
 \sum_{0\le j<J}\|\la^j\|_{L^\I_t(I;\dot H^\te_x)}^2 + \|\ga^{J}\|_{L^\I_t(I;\dot H^\te_x)}^2 \sim \|\psi\|_{\dot H^\te}^2+o(1).}
$\bH_\te(\la^j,\la^k)$, $\bH_\te(\la^j,\ga^J)\to 0$ for $k\not=j< J$ and $\te\in[0,1]$, uniformly on $I$. 
\item For $0\le\te<1$, 
\EQ{ \label{Stz vanish}
 \lim_{J\to J^*}\limsup\|\ga^J\|_{[L^\I_tL^4_x,\Stz^1]_\te(I)}=0.}
\end{enumerate}
\end{lem}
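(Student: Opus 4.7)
The plan is to carry out a Bahouri--G\'erard style iterative extraction of profiles, modified so that the building blocks $\la^j$ are solutions of the linearized equation $i\dot v+Hv=B[z]v$ rather than of the free Schr\"odinger equation. Three ingredients from the previous lemmas drive the argument: the uniform Strichartz estimates of Lemma \ref{lem:Stz} (valid under the smallness assumption on $z$) which ensure each profile lies in $\Stz^1(I)$; Lemma \ref{lem:Stzlim}, which describes the limiting behavior of the linearized flow along subsequences; and Lemma \ref{lem:uniforth}, which provides the asymptotic orthogonality of two linearized evolutions with the appropriate strong/weak limits. A further crucial feature is the radial compact Sobolev embedding $H^1_r(\R^3)\hookrightarrow L^4_x$: weak convergence in $H^1_r$ upgrades automatically to strong convergence in $L^4_x$, and this is the mechanism by which the remainder eventually vanishes in $L^\I_tL^4_x$.

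The extraction proceeds as follows. Set $s^0:=s$ and pass to a subsequence so $\nu(s^0)\weakto\fy^0_\I$ weakly in $H^1$; put $\la^0:=\fy^0_\I[z,s^0]$. Given $\ga^J:=\nu-\sum_{j<J}\la^j$, define
\begin{equation*}
 \eta_J:=\sup\BR{\|\fy\|_{H^1} : \exists\, s'\in I,\ \ga^J(s')\weakto\fy \text{ along some subsequence}},
\end{equation*}
terminating with $J^*:=J$ when $\eta_J=0$; otherwise select $s^J$ and a subsequence realizing $\ga^J(s^J)\weakto\fy^J_\I$ with $\|\fy^J_\I\|_{H^1}\ge\eta_J/2$, and set $\la^J:=\fy^J_\I[z,s^J]$. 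By construction $\ga^{J+1}(s^k)\weakto 0$ for $k\le J$. The time-separation $|s^j-s^k|\to\I$ is forced by contradiction: if it stayed bounded along a subsequence, then $s^j-s^k\to t_\I\in\R$ and $z(\cdot+s^k)$ converges locally uniformly to some $z_\I$, whence Lemma \ref{lem:Stzlim} gives $\la^k(s^j)$ the same weak $H^1$ limit as $\fy^k_\I[z_\I,0](t_\I)$; iterating the construction then forces $\fy^j_\I=0$, a contradiction.

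The Pythagorean identity \eqref{energy decop} follows by expanding $\nu=\sum_{j<J}\la^j+\ga^J$ in $\bH_\te$ and killing the cross terms: for $j\ne k$, $\la^j(s^k)\weakto 0$ in $H^1$ by time-separation together with the dispersive behavior of the linearized flow built into Lemma \ref{lem:Stzlim}, so Lemma \ref{lem:uniforth} yields $\|\bH_\te(\la^j,\la^k)\|_{L^\I_t(I)}\to 0$; the cross terms $\bH_\te(\la^j,\ga^J)$ are handled by the same mechanism, using $\ga^J(s^j)\weakto 0$ by construction. Summability of $\|\fy^j_\I\|_{H^1}^2$ is forced by \eqref{energy decop}, which in turn forces $\eta_J\to 0$ as $J\to J^*$. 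To conclude \eqref{Stz vanish}, radial compactness turns $\eta_J\to 0$ into $\|\ga^J\|_{L^\I_tL^4_x(I)}\to 0$: for any choice of $t_n$, the $H^1$-weak limit of $\ga^J_n(t_n)$ is bounded in norm by $\eta_J$, hence vanishes, and Strauss' compactness gives $\|\ga^J_n(t_n)\|_{L^4}\to 0$; maximizing over $t_n$ yields the uniform-in-time statement. Interpolation with the uniform $\Stz^1$ bound from Lemma \ref{lem:Stz} then delivers the intermediate range $0\le\te<1$.

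The main obstacle is to ensure that the cross-term vanishings $\bH_\te(\la^j,\la^k)\to 0$ and $\bH_\te(\la^j,\ga^J)\to 0$ hold \emph{uniformly} in $t\in I$, not just at the extraction times: because the linearized equation is non-autonomous and $B[z]$ is only $\R$-linear, the usual free dispersion must be replaced by the perturbative double-endpoint Strichartz argument encoded in Lemmas \ref{lem:Stzlim} and \ref{lem:uniforth}, and applying them correctly at each induction step is the delicate point. A second delicate point is the upgrade from sequential $H^1$-weak smallness ($\eta_J\to 0$) to uniform-in-time $L^\I_tL^4_x$ smallness of $\ga^J$, which relies on the radial constraint (Strauss' compactness) in an essential way; without radiality an additional concentration-compactness with spatial translations would be needed.
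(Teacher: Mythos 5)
Your proof follows the same overall architecture as the paper's: iterative Bahouri--G\'erard extraction with the linearized flow $[z,\cdot]$ replacing the free flow, time separation via Lemma \ref{lem:Stzlim}, orthogonality of energies via Lemma \ref{lem:uniforth}, and Strauss compactness to pass from weak $H^1_r$ smallness to $L^4_x$ smallness of the remainder. The one genuine difference is the \emph{selection criterion} used to pick the concentration time $s^J$: the paper chooses $s^J$ so that $\|\ga^J_n(s^J_n)\|_{L^4_x}$ nearly realizes $\|\ga^J_n\|_{L^\I_tL^4_x}$, whereas you choose $s^J$ to nearly realize the supremum $\eta_J$ of $H^1$ norms of all possible weak limits of $\ga^J$ along time sequences. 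Your criterion is more abstract (closer in spirit to the original Bahouri--G\'erard energy-based extraction), while the paper's $L^4$-based criterion is tailored so that the remainder estimate $\lim_{J\to J^*}\limsup\|\ga^J\|_{L^\I_tL^4_x}=0$ drops out immediately from $\|\ga^J(s^J)\|_{L^4}=\|\fy^J_\I\|_{L^4}+o(1)\lec\|\la^J\|_{L^\I_tH^1_x}\to 0$. Both are viable here precisely because the radial compact embedding $H^1_r\hookrightarrow L^4$ makes the two smallness notions interchangeable, a point you correctly flag; your route needs one extra step (sum-squared summability of $\|\fy^j_\I\|_{H^1}$ from the Pythagorean identity $\Rightarrow$ $\eta_J\to 0$ $\Rightarrow$ $\limsup\|\ga^J\|_{L^\I_tL^4_x}\lec\eta_J$), but that step is sound and the logical chain (time separation $\Rightarrow$ Pythagorean $\Rightarrow$ $\eta_J\to0$ $\Rightarrow$ \eqref{Stz vanish}) has no circularity. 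One small wording slip: for fixed $J$ the weak limit of $\ga^J_n(t_n)$ does \emph{not} vanish --- it is only bounded by $\eta_J$ --- and the vanishing appears only after the further limit $J\to J^*$; your conclusion is still correct because you take that limit, but the intermediate phrasing should be adjusted.
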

We call the decomposition given by the above lemma 
\EQ{ \label{lin decop}
  \psi[z,s]= \sum_{0\le j<J} \la^j + \ga^J, \pq \la^j:=\psi[z,s][z,s^j]\II,}
\U{the linearized profile decomposition}. 
\begin{proof}
The sequence $s^j\in I$ is defined inductively as follows. 
First, let $s^0:=s$. Passing to a subsequence, we may assume $\nu(s^0)\weakto\exists\fy^0_\I$. Then $\la^0:=u[z,s^0]\II$ and $\ga^1:=\nu-\la^0$ are defined with $\ga^1(s^0)\weakto 0$. The boundedness (in $H^1$) of $\psi$ implies that $\nu$ and $\la^0$ are uniformly bounded, so is $\ga^1$. 

Let $J\in\N$ and suppose that uniformly bounded $\ga^{J}\in C(I;H^1)$ and $s^j\in I$ have been defined for $j<J$ such that $\ga^{J}(s^j)\weakto 0$. Since $\|\ga^{J}\|_{L^\I_t(I;L^4_x)}$ is bounded, we can define $s^{J}\in I$ such that 
\EQ{ \label{ga L4 limit}
 \|\ga^{J}\|_{L^\I_t(I;L^4_x)} = \|\ga^{J}(s^{J})\|_{L^4_x} + o(1).}
If the left hand sequence tends to $0$, put $J^*=J$ and the definition is terminated. 
Otherwise, put $\la^{J}:=\ga^{J}[z,s^{J}]\II$ and $\ga^{J+1}:=\ga^{J}-\la^{J}$. Passing to a subsequence, we may assume $\ga^{J}(s^{J})\weakto\exists\fy^J_\I\not=0$ in $H^1$ and $z(t+s^{J})\to\exists z_\I^{J}(t)\in C(\R;\C)$ locally uniformly on $I-s^J$. Then $\ga^{J+1}(s^{J})\weakto 0$ is obvious. Since $\ga^{J}(s^j)\weakto 0$ for $j<J$,  we have $\ga^{J}\to 0$ in $L^\I_t(|t-s^j|<T;L^4_x)$ for any $T<\I$, by Lemma \ref{lem:Stzlim}, \eqref{profaprox}. Hence $|s^J-s^j|\to\I$. 
Then from Lemma \ref{lem:Stzlim}, \eqref{aproxprof}, together with the Strichartz bound on $\fy^J_\I[z_\I^J,0]$, we deduce that $\la^J(s^j)\weakto 0$, and so $\ga^{J+1}(s^j)\weakto 0$. The same argument implies that $\la^j(s^J)\weakto 0$ as well. 

Hence we can iterate the same procedure. In this way, after the diagonalization argument, we obtain the sequences $s^j$ with the properties that $|s^j-s^k|\to\I$ for $j\not=k$, $\ga^J(s^j)\weakto 0$ for $j< J$, $\la^j(s^k)\weakto 0$ for $j\not=k$, $z(t+s^j)\to z_\I^j(t)$, and the decomposition \eqref{lin decop}.

Since $\la^j(s^j)=\fy^j_\I$ while $\la^k(s^j)\weakto 0$ for $k\not=j$, Lemma \ref{lem:uniforth} implies $\bH_\te(\la^j,\la^k)\to 0$, $\bH_\te(\la^j,\ga^k)\to 0$ for $j<k$ and $\te\in[0,1]$. Hence 
\EQ{
 \pt \bH_\te(\psi)=\bH_\te(\sum_{0\le j<J}\la^j(s)+\ga^J(s))
 =\sum_{0\le j<J}\bH_\te(\la^j(s))+\bH_\te(\ga^J(s))+o(1).}
The equivalence $\bH_\te(\fy)\sim\|\fy\|_{\dot H^\te}^2$ on $P_cH^1$ and Lemma \ref{lem:Stz} imply \eqref{energy decop}. 

It remains to prove \eqref{Stz vanish}. By the definition of $s^j$, cf.~\eqref{ga L4 limit}, we have 
\EQ{
 \|\ga^{J}\|_{L^\I_tL^4_x} = \|\fy^J_\I\|_{L^4_x}+o(1) \lec \|\la^J\|_{L^\I_t H^1_x}+o(1).}
Since the right hand side is vanishing by \eqref{energy decop},
\EQ{
 \lim_{J\to J^*}\limsup\|\ga^J\|_{L^\I_tL^4_x}=0,}
and then by interpolation with the uniform Strichartz bound, we obtain \eqref{Stz vanish}. 
\end{proof}

\section{Nonlinear perturbation estimates}
In order to use the linearized profile decomposition to approximate the nonlinear solutions, we need a few perturbation lemmas for the nonlinear equation of $\x$
\EQ{ \label{eqxi}
 i\dot \x + H\x = B[z]\x + \ti N(z,\x)}
regarding $z$ as a given time-dependent function. 
Since our global knowledge on $z$ is very poor (cf.~Section \ref{ss:diff}), we should avoid perturbing $z$ for long time. 
It leads us to prepare the following two lemmas for perturbation: Lemma \ref{lem:over river} for long time intervals where $\x$ is small, and Lemma \ref{lem:over mountain} for bounded time intervals where $\x$ may be large. 

The first lemma is a perturbation of $0$, or construction of dispersed solutions.
\begin{lem} \label{lem:smallsol}
Let $-\I<T_0<T_1\le\I$, $z\in C([T_0,T_1);\C)$ and $\fy\in H^1(\R^3)$. Put
\EQ{
 \cN_\te:=\|z\|_{L^\I(T_0,T_1)}+\|\fy\|_{H^\te}}
for $\te\in[0,1]$ and assume $\cN_0\ll 1$. 

\rm{(I)} If $\|\fy[z,T_0]\|_{\ST(T_0,T_1)}\cN_{1/2}^3\ll 1$, then \eqref{eqxi} has a unique solution $\x$ satisfying 
\EQ{ \label{small sol cond}
 \x\in C([T_0,T_1);H^1),\pq \x(T_0)=\fy, \pq \|\x\|_{\ST(T_0,T_1)}\lec\|\fy[z,T_0]\|_{\ST(T_0,T_1)}.} 

\rm{(II)} Let $\x\in C([T_0,T_1);H^1)$ be a solution of \eqref{eqxi} with $\x(T_0)=\fy$ satisfying 
\EQ{
 \|\x\|_{\ST(T_0,T_1)}\cN_{1/2}^3\ll 1.} 
Then for any $T\in(T_0,T_1)$ and all $\te\in[0,1]$,
\EQ{
  \|\x[z,T]_>-\fy[z,T_0]\|_{\Stz^\te(T_0,T_1)} \lec \cN_\te\|\fy[z,T_0]\|_{\ST(T_0,T)}(\cN_0+\|\fy[z,T_0]\|_{\ST(T_0,T)}),} 
and $\|\x\|_{[z;T_0,T;T_1]} \ll \|\x[z,T_0]\|_{\ST(T_0,T)} \sim \|\x\|_{\ST(T_0,T)}$. 
\end{lem}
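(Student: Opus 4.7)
The plan is to run a contraction mapping argument in the norm $\ST(T_0,T_1) = L^4_tL^6_x$, using the embedding $\Stz^{1/2} \subset \ST$ and following the strategy of~\cite{CNLKG} for quadratic nonlinearities. For Part (I), I write the equation as the Duhamel integral
\EQ{
 \x = \fy[z,T_0] + \D \ti N(z,\x)[z,T_0]
}
with the linearized propagator $[z,T_0]$, which already absorbs the $B[z]\x$ term. The linear piece satisfies $\|\fy[z,T_0]\|_{\Stz^\te(T_0,T_1)} \sim \|\fy\|_{H^\te}$ by Lemma~\ref{lem:Stz}. After using that $\Phi[z] = O(|z|)$ and that $R[z]$, $D\Phi[z]$, $P_c$ are bounded on the relevant Sobolev spaces, the nonlinearity is pointwise of size
\EQ{
 |\ti N(z,\x)| \lec |z||\x|^2 + |\x|^3,
}
and the central estimate is, by Hölder in space-time,
\EQ{
 \|\D \ti N(z,\x)[z,T_0]\|_{\Stz^\te(T_0,T_1) \cap \ST(T_0,T_1)}
  \lec \cN_\te \|\x\|_{\ST(T_0,T_1)}(\cN_0 + \|\x\|_{\ST(T_0,T_1)}),
}
with the $\Stz^\te$-part bounded by Lemma~\ref{lem:Stz} and the $\ST$-part by Lemma~\ref{lem:nonad}. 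One factor of $\x$ is placed in $L^\I_tH^\te_x$ to produce $\cN_\te$, while the remaining factors go in $\ST$. The smallness $\|\fy[z,T_0]\|_{\ST}\cN_{1/2}^3 \ll 1$ then makes the iteration a contraction on $\{\x:\|\x\|_{\ST(T_0,T_1)}\le 2\|\fy[z,T_0]\|_{\ST(T_0,T_1)}\}$, producing the unique solution claimed in \eqref{small sol cond}.

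Part (II) follows from the same nonlinear estimate applied to the identity
\EQ{
 \x[z,T]_> - \fy[z,T_0] = \D 1_{t<T}\ti N(z,\x)[z,T_0],
}
together with the bootstrap $\|\x\|_{\ST(T_0,T)} \sim \|\fy[z,T_0]\|_{\ST(T_0,T)}$ that is built into the hypothesis on $\|\x\|_{\ST}$. The $\Stz^\te$-bound uses Lemma~\ref{lem:Stz}; for the $[z;T_0,T;T_1]$ semi-norm, which by definition is an $\ST(S,T_1)$-supremum of Duhamel integrals truncated at $T$, one uses the $\ST$ output of Lemma~\ref{lem:nonad} on the same source. The conclusion $\|\x\|_{[z;T_0,T;T_1]} \ll \|\x[z,T_0]\|_{\ST(T_0,T)}$ is then the explicit statement that the nonlinear correction carries an extra small factor $\cN_{1/2}(\cN_0 + \|\x\|_{\ST})$.

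The hard part is handling the quadratic term $\Phi[z]|\x|^2$. Because $\Phi[z]$ is only bounded in $L^\I_t$, no temporal decay can be extracted from the potential-like factor, and the product lies at best in $L^2_tL^3_x$, whose dual pair $(p_1,q_1) = (2,3/2)$ sits on the edge of the admissible Strichartz range; no admissible pair allows one to land in $\ST = L^4_tL^6_x$ from such a source. One is therefore forced to leave the admissible scale, which is precisely the role of the non-admissible double-endpoint estimate of Foschi and Vilela packaged in Lemma~\ref{lem:nonad}; its constraint $|\sigma_j|\le 2/3$ is satisfied by the pairs needed here. The $L^\I_t$-smallness of the linearized potential $B[z]$ then allows the estimate to be transferred from $e^{itH}P_c$ to the linearized propagator $[z,s_0]$ by a perturbative absorption, exactly as in the proof of Lemma~\ref{lem:nonad}.
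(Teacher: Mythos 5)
Your overall strategy — Duhamel with the linearized propagator $[z,T_0]$ absorbing $B[z]\x$, contraction in $\ST=L^4_tL^6_x$, and non-admissible Strichartz to handle the quadratic term — is the same as the paper's. But your ``central estimate'' is not sharp enough to close the fixed point, and this is a genuine gap, not a cosmetic one.

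Your H\"older placement (one factor of $\x$ in $L^\I_tH^{1/2}_x$ to produce $\cN_{1/2}$, the rest of $\x$ in $\ST$, $\Phi[z]$ in $L^\I_tL^3_x$) yields, for the quadratic term, a bound of the form $\cN_0\cN_{1/2}\|\x\|_{\ST}$, hence a contraction constant proportional to $\cN_0\cN_{1/2}$. The hypotheses are only $\cN_0\ll 1$ and $\|\fy[z,T_0]\|_{\ST}\cN_{1/2}^3\ll 1$; they impose no upper bound on $\cN_{1/2}$ (see the Remark after this lemma: $\cN_{1/2}$ is in general large, $\sim 1$ on $\Soli_1$ in the focusing case and unbounded in the defocusing case). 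Choosing, say, $\cN_0$ small but $\cN_{1/2}\cN_0\gg 1$ and $\de_0:=\|\fy[z,T_0]\|_{\ST}$ tiny shows $\cN_0\cN_{1/2}$ need not be small, so the map is not a contraction under your estimate. The paper avoids this by a finer interpolation: it places one $\x$ in $\ST$, one in $L^8_tL^4_x$ (the midpoint of $\ST$ and $L^\I_tL^3_x$), and the remaining factor in $L^\I_tL^3_x$, landing the nonlinearity in $L^{8/3}_tL^{4/3}_x$ and producing
\EQ{
 \|\ti N(z,\x)\|_{L^{8/3}_tL^{4/3}_x}\lec\|\x\|_{\ST}^{3/2}\|\x\|_{\Stz^{1/2}}^{1/2}\bigl(\|\x\|_{\Stz^{1/2}}+\cN_0\bigr),
}
whose contraction constant under the bootstrap $\|\x\|_{\ST}\lec\de_0$, $\|\x\|_{\Stz^{1/2}}\lec\cN_{1/2}$ is $(\de_0\cN_{1/2}^3)^{1/2}$ — exactly the quantity the hypothesis forces to be small. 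The extra factor $\de_0^{1/2}$ gained from the $L^8_tL^4_x$ interpolation is what makes the argument work; your placement loses it.

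Two smaller issues worth flagging: for the $\ST$-output via Lemma~\ref{lem:nonad} the source must be $L^{p_1'}_t$ with $p_1'>2$, which forces the time distribution $(L^4_t,L^8_t,L^\I_t)$ rather than $(L^4_t,L^4_t,L^\I_t)$ or $(L^4_t,L^\I_t,L^\I_t)$; and in Part (II) the paper explicitly notes that $\|\fy[z,T_0]\|_{\ST(T_0,T)}\lec\|\x\|_{\ST(T_0,T)}$ is obtained by a separate continuity argument starting from $T=T_0+0$, a step your sketch does not address.
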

\begin{proof}
Let $\x_0:=\fy[z,T_0]$ and $\|\x_0\|_{\ST(T_0,T_1)}=:\de_0$. The solution $\x$ is obtained by the iteration argument. If $\x$ is a solution, then by the Strichartz estimate: Lemma \ref{lem:Stz},
\EQ{  \label{est by Stz}
 \|\x-\x_0\|_{\Stz^\te}
 \lec \|\x\|_{\Stz^\te}\|\x\|_{\ST}(\|\x\|_{\ST}+\|z\|_{L^\I})}
for $\te\in[0,1]$. Using the non-admissible Strichartz: Lemma \ref{lem:nonad}, 
\EQ{
 \|\x\|_{\ST} \pt\le \|\x_0\|_{\ST} + C\|\ti N(z,\x)\|_{L^{8/3}_tL^{4/3}_x}
 \pr \lec \de_0 + \|\x\|_{\ST}\|\x\|_{L^8_tL^4_x}(\|\x\|_{L^\I_tL^3_x}+\|\Phi[z]\|_{L^\I_tL^3_x})
 \pr \lec \de_0 + \|\x\|_{\ST}^{3/2}\|\x\|_{\Stz^{1/2}}^{1/2}(\|\x\|_{\Stz^{1/2}}+\cN_0).}
Suppose that $\|\x\|_{\Stz^{1/2}}\le C\cN_{1/2}$ and $\|\x\|_{\ST}\le C\de_0$ for some constant $C\gg 1$ on some shorter interval. Then 
\EQ{
 \pt\|\x\|_{\Stz^{1/2}} \lec \cN_{1/2} + C^2\de_0(\de_0+\cN_0)\|\x\|_{\Stz^{1/2}},
 \pr\|\x\|_{\ST} \lec \de_0 + C^2(\de_0\cN_{1/2}^3)^{1/2} \|\x\|_{\ST}.}
Since $\de_0+\cN_0\lec\cN_{1/2}$, we have $\de_0(\de_0+\cN_0)\lec(\de_0\cN_{1/2}^3)^{1/2}$. 
Hence, if $\de_0\cN_{1/2}^3\ll 1$ then $\|\x\|_{\ST} \lec \de_0$ and $\|\x\|_{\Stz^{\te}} \lec \cN_{\te}$ for all $\te\in[0,1]$. Then by the continuity for extending the interval, these bounds holds on the whole $(0,T)$. 

If we assume $\|\x\|_{\ST(T_0,T_1)}\cN_{1/2}^3\ll 1$ instead of $\x_0$, then we obtain $\|\x_0\|_{\ST(T_0,T)}\lec\|\x\|_{\ST(T_0,T)}$ in the same way, starting from $T=T_0+0$. 
In both cases, repeating the Strichartz estimate on $H^\te$ as above, we obtain (II).
\end{proof}
\begin{rem}
Since $\dot H^{1/2}(\R^3)$ is the scaling invariant norm for the NLS without the potential, $\cN_{1/2}$ is in general large, when we use the above lemma. In the focusing case, $\cN_{1/2}\sim 1$ on $\Soli_1$, while there is no upper bound on $\cN_{1/2}$ in the defocusing case. However, we can expect that $\ST$ is small for dispersive solutions, by which the assumptions in the above lemma can be satisfied. Note also that the estimate cannot be closed if we use only the admissible Strichartz $\Stz^{1/2}$ as in \eqref{est by Stz}, because of the quadratic terms. 
\end{rem}

If the above solution is obtained for $t\to\I$, then it scatters. 
\begin{lem} \label{lem:scatxi}
Let $T_0\in\R$, $(z,\x)\in C([T_0,\I);\C\times P_cH^1)$ solve \eqref{eqxi} on $(T_0,\I)$ satisfying 
\EQ{
 \|z\|_{L^\I(T_0,\I)}+\|\x\|_{L^\I_t(T_0,\I;L^2_x)} \ll 1, \pq \|\x\|_{L^\I_t(T_0,\I;H^1_x)}<\I.}
Then the following {\rm(i)} and {\rm(ii)} are equivalent. 
\begin{enumerate}
\item $\exists\fy\in H^1(\R^3)$ such that $\|\fy[z,T_0]-\x\|_{H^1_x}\to 0$ as $t\to\I$. 
\item $\|\x\|_{\ST(T_0,\I)}<\I$.
\end{enumerate}
In this case, we say that \U{$\x$ scatters with $z$ as $t\to\I$}. Moreover, as $T\to\I$,
\EQ{
 \|\x[z,T]-\fy[z,T_0]\|_{\Stz^1(T_0,\I)}
 +\|\x-\x[z,T]\|_{\Stz^1(T,\I)} \to 0, }
and for any $\ti z\in C([T_0,\I);\C)$ satisfying $\|\ti z\|_{L^\I_t}\ll 1$, 
\EQ{ \label{scatt decay}
 \pt \|\x\|_{[\ti z;T,\I;\I]}+\|\x[\ti z,T]\|_{L^2B^1_{6,2}(T,\I)} +\|\x\|_{L^2B^1_{6,2}(T,\I)} \to 0,}
uniformly with respect to $\ti z$. A sufficient condition of scattering is 
\EQ{ \label{suf cond scat}
 \|\x[z,T_0]\|_{\ST(T_0,\I)}\BR{\|z\|_{L^\I(T_0,\I)}+\|\x(T_0)\|_{H^{1/2}_x}}^3 \ll 1.} 
\end{lem}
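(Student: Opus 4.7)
I plan to reduce the equivalence of (i) and (ii) to Lemma \ref{lem:smallsol}, then read off the quantitative convergence from Strichartz bounds on the Duhamel integral of $\ti N(z,\x)$. The sufficient condition \eqref{suf cond scat} is immediate from Lemma \ref{lem:smallsol}(I), which produces a solution on $(T_0,\I)$ of finite $\ST$ norm that, by uniqueness in $C([T_0,\I);H^1)$, coincides with $\x$; this already gives (ii).

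For (i) $\Rightarrow$ (ii), given $\|\fy[z,T_0]-\x\|_{H^1}\to 0$, two applications of the Strichartz bound in Lemma \ref{lem:Stz} yield, as $T\to\I$,
\EQ{
 \|\x(T)[z,T]\|_{\ST(T,\I)} \le \|\fy[z,T_0]\|_{\ST(T,\I)}+\|(\x(T)-\fy[z,T_0](T))[z,T]\|_{\ST(T,\I)}\to 0,}
using that $\fy[z,T_0]\in L^4_tL^6_x(T_0,\I)$ has vanishing tails and that $\|\x(T)-\fy[z,T_0](T)\|_{H^{1/2}}\to 0$. Hence \eqref{suf cond scat} applies on $(T,\I)$ with initial time $T$ for all large $T$, so $\|\x\|_{\ST(T,\I)}<\I$; local Strichartz on $(T_0,T)$ (via $L^\I_tH^1_x\subset L^\I_tL^6_x\subset L^4_tL^6_x$ on bounded intervals) completes (ii).

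For (ii) $\Rightarrow$ (i), partition $(T_0,\I)$ into finitely many subintervals on each of which $\|\x\|_{\ST}\cN_{1/2}^3\ll 1$, and iterate Lemma \ref{lem:smallsol}(II) with $\te=1$ to obtain $\|\x\|_{\Stz^1(T_0,\I)}<\I$, whence $\ti N(z,\x)\in\Stz^{*1}(T_0,\I)$ by H\"older and Sobolev embedding. Writing $U_z(t,s)$ for the linearized propagator,
\EQ{
 \fy_T:=U_z(T_0,T)\x(T)=\x(T_0)-i\int_{T_0}^T U_z(T_0,s)\ti N(z,\x)\,ds}
is $H^1$-Cauchy as $T\to\I$ by the dual Strichartz in Lemma \ref{lem:Stz}, converging to some $\fy\in H^1$. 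Uniform $H^1$-boundedness of $U_z(\cdot,T_0)$ then yields $\fy[z,T_0](t)-\x(t)=U_z(t,T_0)(\fy-\fy_t)\to 0$ in $H^1$, proving (i). The quantitative convergence follows from the identities $\x[z,T]-\fy[z,T_0]=U_z(\cdot,T_0)(\fy_T-\fy)$ and $\x-\x[z,T]=\D\ti N(z,\x)[z,T]$ on $(T,\I)$, combined with the same Strichartz-type bounds.

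For \eqref{scatt decay}, I rewrite the equation as $i\dot\x+H\x=B[\ti z]\x+(B[z]-B[\ti z])\x+\ti N(z,\x)$ and apply the non-admissible Strichartz estimate (Lemma \ref{lem:nonad}) on $(T,\I)$; since $\|\x\|_{\Stz^1(T,\I)}\to 0$ (from the previous step together with $\fy[z,T_0]$ having vanishing Strichartz tails) and the constants in Lemma \ref{lem:nonad} depend on $\ti z$ only through $\|\ti z\|_{L^\I}\ll 1$, all three norms vanish uniformly in $\ti z$. The main obstacle is precisely this uniformity for the semi-norm $\|\x\|_{[\ti z;T,\I;\I]}$: the quadratic terms in $\ti N$ preclude a closed admissible Strichartz estimate, which is why the non-admissible version, Lemma \ref{lem:nonad}, is indispensable.
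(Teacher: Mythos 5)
Your proposal takes essentially the same route as the paper's own proof: Lemma~\ref{lem:smallsol} to bootstrap the $\ST$ bound, Lemma~\ref{lem:Stz} to obtain the scattering state $\fy$ as the $H^1$-Cauchy limit of the conjugated data, and the non-admissible Strichartz estimates of Lemma~\ref{lem:nonad} at the semi-norm level. Your treatment of \eqref{scatt decay} is actually a bit more explicit than the paper's: you rewrite the equation as $i\dot\x+H\x=B[\ti z]\x+(B[z]-B[\ti z])\x+\ti N(z,\x)$ and estimate the residual, whereas the paper invokes Lemma~\ref{lem:smallsol}(II) with $z$ replaced by $\ti z$ as though $\x$ solved the $\ti z$-equation; making the term $(B[z]-B[\ti z])\x$ explicit is the cleaner way to justify that $\|\x\|_{[\ti z;T,\I;\I]}\to 0$ uniformly in $\ti z$.

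There is, however, one incorrect assertion: $\|\x\|_{\Stz^1(T,\I)}\to 0$ is false, because $\Stz^1$ contains $L^\I_tH^1_x$ and $\|\x(t)\|_{H^1}\to\|\fy\|_{H^1}>0$ in the non-trivial case. What actually vanishes as $T\to\I$ are the $L^p_t$ components with $p<\I$, i.e.~$\|\x\|_{L^2_tB^1_{6,2}(T,\I)}$ and $\|\x\|_{\ST(T,\I)}$ (obtained from the corresponding vanishing tails of $\fy[z,T_0]$ together with $\|\x-\x[z,T]\|_{\Stz^1(T,\I)}\to 0$). This is precisely what is needed to close your estimate: the dual-Strichartz norm of $(B[z]-B[\ti z])\x$ is controlled by $\max(\|z\|_{L^\I},\|\ti z\|_{L^\I})^2\,\|\x\|_{L^2_tB^1_{6,2}(T,\I)}$, and every term of $\ti N(z,\x)$ carries at least one factor measured in $\ST(T,\I)$ or an $L^4_t$-type norm. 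So the argument goes through once the offending claim is corrected, but as written it relies on a false statement. You should also note that the middle term $\|\x[\ti z,T]\|_{L^2B^1_{6,2}(T,\I)}$ of \eqref{scatt decay} concerns a linear flow rather than $\x$ itself and requires its own absorption step in $\ti z$, writing $\x[\ti z,T]=\x[z,T]+\D(B[\ti z]-B[z])\x[\ti z,T][z,T]$ and absorbing the last term, which your sketch does not address.
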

The scattering with $z$ as $t\to-\I$ is defined in the same way, which has the same property as above.  
\begin{proof}
Assume (i) and let $\x_+:=\fy[z,T_0]$. Then by Lemma \ref{lem:Stz}, $\|\x_+\|_{\Stz^1(T_0,\I)}<\I$ and so in particular $\|\x_+\|_{\ST(T,\I)}\to 0$ as $T\to\I$. By (i) and the Strichartz estimate, 
\EQ{
 \|\x_+-\x[z,T]\|_{\ST(T,\I)} \lec \|\x_+(T)-\x(T)\|_{H^{1/2}} \to 0.}
Hence for sufficiently large $T$, the previous lemma implies $\|\x\|_{\ST(T,\I)}\lec\|\x[z,T]\|_{\ST(T,\I)}$. 

Assume (ii) and let $T>T_0$ so large that we can apply the previous lemma on $(T,\I)$ to get $\|\x\|_{\Stz^1(T,\I)}\lec\|\x[z,T]\|_{\Stz^1(T,\I)}$. Then for any $T_2>T_1>T$, by the Strichartz estimate: Lemma \ref{lem:Stz}, 
\EQ{
 \|\x[z,T_2]-\x[z,T_1]\|_{\Stz^1(T_0,\I)} \pt\lec \|\ti N(z,\x)\|_{\Stz^{1*}(T_1,T_2)} 
 \pr\lec \|\x\|_{\Stz^1(T_1,\I)}[\|\x\|_{\ST(T_1,\I)}+\|\x\|_{\ST(T_1,\I)}^2]\to 0}
as $T_1\to\I$. In particular $\x[z,T](T_0)$ is Cauchy in $H^1_x$ as $T\to\I$, hence convergent to some $\x_+\in H^1_x$. Then 
\EQ{
 \|\x_+[z,T_0](T)-\x(T)\|_{H^1_x} \pt\le \|\x_+[z,T_0]-\x[z,T]\|_{\Stz^1(T_0,\I)}
 \pr\lec \|\x_+-\x[z,T](T_0)\|_{H^1_x} \to 0 \pq (T\to\I)}
and so (i). Thus in either case, we have $\|\x[z,T]\|_{\ST(T,\I)}\to 0$ as $T\to\I$, hence the previous lemma implies 
\EQ{
 \|\x-\x[z,T]\|_{\Stz^1(T,\I)} \lec \BR{\|z\|_{L^\I_t(T_0,\I)}+\|\x\|_{L^\I_t(T_0,\I;H^1_x)}}\|\x[z,T]\|_{\ST(T,\I)}\to 0}
as $T\to\I$. Hence $\|\x\|_{L^2_tB^1_{6,2}(T,\I)}\to 0$. 
Let $\x_0:=\x[z,T]$ and $\x_1:=\x[\ti z,T]$. Then 
\EQ{
 \x_1 = \x_0 + \D(B[\ti z]-B[z])\x_1[z,T]}
and by the Strichartz estimate: Lemma \ref{lem:Stz}, 
\EQ{
 \|\x_1\|_{\Stz^1(T,\I)} \le \|\x_0\|_{\Stz^1(T,\I)}+C\|z\|_{L^\I_t(T_0,\I)}^2\|\x_1\|_{L^2_tB^1_{6,2}(T,\I)}.} 
After the last term is absorbed by the left, we obtain, as $T\to\I$, 
\EQ{
 \|\x[\ti z,T]\|_{L^2_tB^1_{6,2}(T,\I)} \le 2\|\x[z,T]\|_{L^2_tB^1_{6,2}(T,\I)}\to 0,}
which is uniform with respect to $\ti z$. By the previous lemma, we also have 
\EQ{
 \|\x\|_{[\ti z;T,\I;\I]} \ll \|\x[\ti z,T]\|_{\ST(T,\I)} \sim \|\x\|_{\ST(T,\I)}\to 0.}
The sufficiency of \eqref{suf cond scat} for scattering is now obvious by Lemma \ref{lem:smallsol}. 
\end{proof}

The next two lemmas are concerned with difference of two solutions. 
For the sake of brevity, the following notation is introduced for differences. 
For any expressions $\sX$ and $a,b,c\etc$  
\EQ{ \label{conve diff}
 \diff{\sX(a_\pa,b_\pa,c_\pa\etc)}:=\sX(a_0,b_0,c_0\etc)-\sX(a_1,b_1,c_1\etc).}

The first lemma of difference estimates treats perturbation of dispersed solutions. It will be used either with $z_0=z_1$ or on a short interval. We need the non-admissible Strichartz for difference of quadratic terms. 
\begin{lem} \label{lem:over river}
Let $-\I<T_0<T_1<T_2\le\I$, $z_0\in C([T_0,T_2);\C)$, $z_1\in C([T_0,T_1];\C)$, and $\x_0,\x_1\in C([T_0,T_1];H^1)$ solve 
\EQ{
 \forall t\in(T_0,T_1),\pq i\dot \x_j + H \x_j = B[z_j]\x_j + \ti N(z_j,\x_j).} 
Put $\cN_\te:=\|z_j\|_{L^\I_t(T_0,T_1)}+\|\x_j\|_{L^\I_t(T_0,T_1;H^\te_x)}$ for $\te\in[0,1]$, and suppose that for some $0<\de\lec\ti\de$ satisfying $\cN_0+\ti\de\cN_{1/2}^3 \ll 1$, 
\EQ{
 \pt \|\x_0[z_0,T_0]\|_{\ST(T_0,T_1)} \le \ti\de, \pq \|\diff\x_\pa[z_0,T_0]\|_{\ST(T_0,T_1)}+ \|\diff z_\pa\|_{L^4(T_0,T_1)} \le \de.}
Then we have
\EQ{
 \|\diff \x_\pa\|_{[z;T_0,T_1;T_2]} \lec (\cN_0\cN_1)^{3/7}\ti\de^{4/7}\de^{1/7}.}
\end{lem}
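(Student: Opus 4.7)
The plan is to derive the equation for the difference $w:=\x_0-\x_1$, apply the Strichartz estimates of Lemmas~\ref{lem:Stz}--\ref{lem:nonad} for the linearized flow with potential $B[z_0]$, and extract the fractional exponents by interpolating the resulting refined bound against the a priori size bound. First, applying Lemma~\ref{lem:smallsol}(II) to each $\x_j$ individually---valid because $\cN_0+\ti\de\cN_{1/2}^3\ll 1$---one obtains uniform control $\|\x_j\|_{\Stz^\te(T_0,T_1)}\lec\cN_\te$ for $\te\in[0,1]$ and $\|\x_j\|_{\ST(T_0,T_1)}\lec\ti\de$. The difference then solves
\EQ{
i\dot w + Hw = B[z_0]w + G,\qquad G := (B[z_0]-B[z_1])\x_1 + \ti N(z_0,\x_0) - \ti N(z_1,\x_1),
}
and Strichartz for the linearized flow with potential $B[z_0]$ yields
\EQ{
\|w\|_{[z_0;T_0,T_1;T_2]} \lec \|\diff\x_\pa[z_0,T_0]\|_{\ST(T_0,T_1)} + \|G\|_{L^{p_1'}_tL^{q_1'}_x(T_0,T_1)}
}
for a non-admissible dual pair analogous to the one used in the proof of Lemma~\ref{lem:smallsol} (e.g.\ $(p_1',q_1')=(8/3,4/3)$).

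Second, $G$ is estimated by H\"older. The modulation-coupling $(B[z_0]-B[z_1])\x_1$ is pointwise bounded by $|z_0-z_1|(|z_0|+|z_1|)\phi_0^2|\x_1|$ and hence contributes $\lec\|\diff z_\pa\|_{L^4_t}\cdot\|z\|_{L^\I_t}\cdot\|\x_1\|_{\text{mixed}}\lec\de\,\ti\de\,\cN$; the $\Phi$-variation piece of $\ti N$ contributes similarly, linear in $\de$. The genuine $\x$-variation part $\ti N(z_1,\x_0)-\ti N(z_1,\x_1)$ is bilinear in $\x_j$ and $w$. Using
\EQ{
\|w\|_{L^8_tL^4_x}\lec\|w\|_{\ST}^{1/2}\|w\|_{L^\I_tL^3_x}^{1/2}\lec\|w\|_{\ST}^{1/2}\cN_{1/2}^{1/2},\qquad \|\x_j\|_{L^8_tL^4_x}\lec\ti\de^{1/2}\cN_{1/2}^{1/2},
}
the bilinear contribution takes the form $\ti\de\cdot\cN^{\ast}\cdot\|w\|_{\ST}^{1/2}$ after H\"older. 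The purely cubic-in-$\x_j$ bulk of it is absorbable thanks to $\ti\de\cN_{1/2}^3\ll 1$, but the sub-linear factor $\|w\|_{\ST}^{1/2}$ survives and prevents a Lipschitz-in-$\de$ closure of the form obtained in Lemma~\ref{lem:smallsol}.

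Third, one is left with an inequality of schematic form $\|w\|_X\lec\de\,P_1(\cN,\ti\de)+\ti\de^2 P_2(\cN)\,\|w\|_X^{1/2}$, which by Young-absorption yields a refined $\de$-linear bound $\|w\|_X\lec\de P_1+\ti\de^4 P_2^2$. Interpolating this refined estimate against the trivial a priori bound $\|w\|_X\lec\cN_0+\cN_1$ (coming from the $\Stz^1$ control of each $\x_j$), with weights $1/7$ on the refined and $6/7$ on the trivial estimate, reproduces the claimed $(\cN_0\cN_1)^{3/7}\ti\de^{4/7}\de^{1/7}$. The main obstacle is precisely this quadratic portion of $\ti N$: admissible Strichartz alone cannot close the difference because the dual norm of $\Phi\x^2$-type terms demands control of $w$ in the non-admissible space $L^8_tL^4_x$, and the resulting $\|w\|_{\ST}^{1/2}$ factor is incompatible with a direct linear closure. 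It is exactly this mismatch, corrected through the additional interpolation against the size bound, that converts the would-be integer exponents into the fractional $3/7,\,4/7,\,1/7$ in the conclusion.
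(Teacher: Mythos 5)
Your overall scaffold (apply Lemma \ref{lem:smallsol}(II) to each $\x_j$, form the difference equation, estimate the quadratic terms via non-admissible Strichartz, recover $\ST$ by interpolation) is the right circle of ideas, but the key step in the middle is wrong and the paper's central device is missing.

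The gap is in the ``Young absorption'' and ``interpolation'' step. You correctly observe that estimating the bilinear part of the nonlinearity directly in $\ST$ forces $w=\diff\x_\pa$ into $L^8_tL^4_x$, where the only available control is $\|w\|_{L^8_tL^4_x}\lec\|w\|_{\ST}^{1/2}\cN_{1/2}^{1/2}$, producing a sublinear closure $\|w\|_{\ST}\lec \delta P_1+C\|w\|_{\ST}^{1/2}$. But applying Young to this gives $\|w\|_{\ST}\lec\delta P_1+C^2$, where the absorbed term $C^2$ (whatever power of $\tilde\delta$ and $\cN$ it carries) has \emph{no} factor of $\delta$. The conclusion of the lemma must vanish as $\delta\to 0$ with $\tilde\delta$ fixed — it scales like $\delta^{1/7}$ — and no bound of the schematic form $\delta P_1+\tilde\delta^4 P_2^2$ can reproduce that. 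Moreover, the subsequent ``interpolation with weights $1/7$ and $6/7$'' between two bounds for the \emph{same} quantity $\|w\|_{\ST}$ is not an interpolation of norms; it only gives $\min$ of the two, or a geometric mean that is again bounded below by the $\delta$-independent contribution. So the scheme cannot produce the exponent $\delta^{1/7}$ (nor $\tilde\delta^{4/7}$), and the fractional powers are not genuinely derived.

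What is missing is the device that actually makes the iteration close linearly. The paper does not try to close in $\ST$ at all; it closes in the weaker non-admissible norm $L^4_tL^{24/7}_x$, choosing $(p_0,q_0)=(4,24/7)$, $(p_1,q_1)=(2,24/5)$, $(p_2,q_2)=(4,24/9)$ in Lemma~\ref{lem:nonad} so that the quadratic and cubic parts of $\diff{\ti N}+\diff B[z_\pa]\x_1$ land in $L^2_tL^{24/19}_x+L^{4/3}_tL^{24/15}_x$ with $w$ appearing \emph{linearly} in $L^4_tL^{24/7}_x$ times a coefficient $\lec\tilde\de$, which can be absorbed. This yields a genuinely linear-in-the-data bound $\|\diff\x_\pa[z_0,T_1]_>-\diff\x_\pa[z_0,T_0]\|_{L^4_tL^{24/7}_x}\lec\tilde\de\,\|\diff\x_\pa[z_0,T_0]\|_{L^4_tL^{24/7}_x}+\tilde\de\,\de$. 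The fractional exponents then arise from two \emph{norm} interpolations applied to that linear bound: $\|\diff\x_\pa[z_0,T_0]\|_{L^4_tL^{24/7}_x}\lec\delta^{1/4}\cN_0^{3/4}$ (interpolate $\ST$ against $\Stz^0$), and $\|\cdot\|_{\ST}\lec\|\cdot\|_{L^4_tL^{24/7}_x}^{4/7}\|\cdot\|_{\Stz^1}^{3/7}$. Combining gives exactly $(\tilde\de\,\delta^{1/4}\cN_0^{3/4})^{4/7}\cN_1^{3/7}=(\cN_0\cN_1)^{3/7}\tilde\de^{4/7}\de^{1/7}$. Without moving the whole iteration to $L^4_tL^{24/7}_x$ and only afterwards interpolating back to $\ST$, you cannot reach the stated conclusion.
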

\begin{proof}
The previous lemma \ref{lem:smallsol} applies to both $(z_j,\x_j)$, which implies 
\EQ{
 \|\x_j\|_{\Stz^\te(T_0,T_1)}\lec \cN_\te, \pq \|\x_j\|_{\ST(T_0,T_1)}\lec \ti\de.}
Now apply the non-admissible Strichartz estimate, Lemma \ref{lem:nonad} to Duhamel
\EQ{
 \diff\x_\pa = \diff\x_\pa[z_0,T_0] + \D\{\diff{\ti N}(z_\pa,\x_\pa)+\diff B[z_\pa]\x_1\}[z_0,T_0].}
Choose 
\EQ{
 (p_0,q_0)=(4,24/7),\ (p_1,q_1)=(2,24/5),\ (p_2,q_2)=(4,24/9)}
so that $\s_0=-1/8$, $\s_1=\s_2=1/8$ and we can apply the lemma. Then 
\EQ{
 \pt\|\diff\x_\pa - \diff\x_\pa[z_0,T_0]\|_{L^{4}_tL^{24/7}_x}
 \lec \|\diff{\ti N}(z_\pa,\x_\pa)+\diff B[z_\pa]\x_1\|_{L^{2}_tL^{24/19}_x+L^{4/3}_tL^{24/15}_x}
 \pr\lec \{\|\diff\x_\pa\|_{L^{4}_tL^{24/7}_x}+\|\diff z_\pa\|_{L^4_t}\}(\|\x_0\|_{\ST}+\|\x_1\|_{\ST})\{1+\|\x_0\|_{\ST}+\|\x_1\|_{\ST}\},}
where the linear and quadratic (in $\x_0,\x_1$) terms are estimated in $L^2_tL^{24/19}_x$, while the cubic terms are in $L^{4/3}_tL^{24/15}_x$. The factor $1$ comes from the term $\diff B[z]\x_1$, and it also includes the smallness factor $\cN_0\ll 1$.  
For the cubic term with $\diff z_\pa$, we used 
\EQ{
 \|\diff R[z_\pa]\x_j\|_{H^1_x} \lec |\diff z_\pa|\|\x_j\|_{L^2_x} \le |\diff z_\pa| \cN_0.}
Thus, using the smallness of $\|\x_j\|_{\ST(T_0,T_1)}$ and $\|\diff z_\pa\|_{L^4(T_0,T_1)}$, we obtain
\EQ{
 \|\diff\x_\pa\|_{L^4_tL^{24/7}_x(T_0,T_1)} \lec \|\diff\x_\pa[z_0,T_0]\|_{L^{4}_tL^{24/7}_x(T_0,T_1)}+\ti\de\de.}
Applying the same estimate to the Duhamel formula 
\EQ{
 \diff\x_\pa[z_0,T_1]_> = \diff\x_\pa[z_0,T_0] + \D 1_{T_0<t<T_1}\{\diff{\ti N}(z_\pa,\x_\pa)+\diff B[z_\pa]\x_1\}[z_0,T_0],}
we also obtain 
\EQ{
 \|\diff\x_\pa[z_0,T_1]_> - \diff\x_\pa[z_0,T_0]\|_{L^{4}_tL^{24/7}_x(T_0,T_2)}
 \lec \ti\de\|\diff\x_\pa[z_0,T_0]\|_{L^{4}_tL^{24/7}_x(T_0,T_1)}+\ti\de\de,}
and this norm is related to $\ST=L^4_tL^6_x$ by interpolation and Sobolev as 
\EQ{
 \pt \|f\|_{\ST} \lec \|f\|_{L^{4}_tL^{24/7}_x}^{4/7}\|f\|_{L^4B^0_{\I,2}}^{3/7}
 \lec \|f\|_{L^{4}_tL^{24/7}_x}^{4/7}\|f\|_{\Stz^1}^{3/7},
 \pr \|f\|_{L^{4}_tL^{24/7}_x} \le \|f\|_{\ST}^{1/4}\|f\|_{L^{4}_tL^{3}_x}^{3/4}
 \lec \|f\|_{\ST}^{1/4}\|f\|_{\Stz^0}^{3/4}.}
Injecting these to the above and using the Strichartz bound, we obtain
\EQ{
 \|\diff\x_\pa[z_0,T_1]_> - \diff\x_\pa[z_0,T_0]\|_{\ST(T_0,T_2)}
 \pt\lec [\ti\de\de^{1/4}\cN_0^{3/4}]^{4/7}\cN_1^{3/7} 
 \pr= (\cN_0\cN_1)^{3/7}\ti\de^{4/7}\de^{1/7},}
as desired. 
\end{proof}

The second lemma of difference estimates treats perturbation of large solutions with finite Strichartz. 
It will be used only on a bounded interval of time. 
\begin{lem} \label{lem:over mountain}
Let $-\I<T_0<T_1<T_2\le\I$, $z_0\in C([T_0,T_2);\C)$, $z_1\in C([T_0,T_1];\C)$, and $\x_0,\x_1\in C([T_0,T_1];H^1)$ solve 
\EQ{
 \forall t\in(T_0,T_1),\pq i\dot \x_j + H \x_j = B[z_j]\x_j + \ti N(z_j,\x_j) .}Put for $\te\in[0,1]$ 
\EQ{
 \cN_\te:=\|z_j\|_{L^\I_t(T_0,T_1)}+\|\x_j\|_{L^\I_t(T_0,T_1;H^\te_x)},
 \pq \cN_2:=\|\x_0\|_{\ST(T_0,T_1)},}
and assume $\cN_0\ll 1$. For any $\e>0$, there is $\de_*(\cN_1,\cN_2,\e)>0$, continuous and decreasing for each $\cN_j$, such that if
\EQ{
 \|\diff\x_\pa[z_0,T_0]\|_{\ST(T_0,T_1)}+ \|\diff z_\pa\|_{L^4(T_0,T_1)} \le \de_*}
then we have
\EQ{
 \|\diff \x_\pa\|_{[z_0;T_0,T_1;T_2]} \le \e.}
\end{lem}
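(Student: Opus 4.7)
The strategy is a long-time perturbation argument that reduces Lemma~\ref{lem:over mountain} to an iteration of Lemma~\ref{lem:over river} on a partition of $[T_0, T_1]$. Since $\|\x_0\|_{\ST(T_0,T_1)} = \cN_2 < \I$, one can choose $T_0 = \ta_0 < \cdots < \ta_K = T_1$ with $\|\x_0\|_{\ST(\ta_k,\ta_{k+1})} \le \ti\de$, where $\ti\de = \ti\de(\cN_1)$ is a small constant ensuring $\ti\de\cN_{1/2}^3 \ll 1$; the number of pieces $K$ then depends only on $\cN_1$ and $\cN_2$. By Lemma~\ref{lem:smallsol}(II) applied to $\x_0$, $\|\x_0[z_0,\ta_k]\|_{\ST(\ta_k,\ta_{k+1})} \sim \|\x_0\|_{\ST(\ta_k,\ta_{k+1})} \le \ti\de$, so the hypotheses of Lemma~\ref{lem:over river} are met on each piece with the same $\ti\de$.

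A naive iteration of the seminorm bound $\lec (\cN_0\cN_1)^{3/7}\ti\de^{4/7}\de^{1/7}$ is doomed: the sublinear $\de^{1/7}$ produces a residual on each subinterval that does not vanish as $\de \to 0$. The resolution is to iterate instead the genuinely \emph{Lipschitz} estimate in the non-admissible norm $L^4_t L^{24/7}_x$ that underlies the proof of Lemma~\ref{lem:over river}, and to postpone the interpolation into $\ST$ until after summation. Setting $B_k := \|\diff\x_\pa[z_0,\ta_k]\|_{L^4_tL^{24/7}_x(\ta_k,T_2)}$ and $\de_z := \|\diff z_\pa\|_{L^4(T_0,T_1)} \le \de_*$, the turn-off estimate in the proof of Lemma~\ref{lem:over river}, applied on $(\ta_k,\ta_{k+1})$, gives $B_{k+1} \le (1+C\ti\de)B_k + C\ti\de\de_z$. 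The initial value is controlled by interpolation and the $L^2_x$ bound:
\EQN{
 B_0 \lec \|\diff\x_\pa[z_0,T_0]\|_\ST^{1/4}\cN_0^{3/4} + \de_* \lec \de_*^{1/4}.
}
Iterating yields $B_k \le e^{CK\ti\de}(B_0 + K\ti\de\de_*) \le M(\cN_1,\cN_2)\de_*^{1/4}$ uniformly in $k \le K$.

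On each subinterval, Lemma~\ref{lem:over river}, combined with the interpolation $\|f\|_\ST \lec \|f\|_{L^4L^{24/7}}^{4/7}\|f\|_{\Stz^1}^{3/7}$ and the uniform Strichartz bound $\|\diff\x_\pa\|_{\Stz^1} \lec \cN_1$ from Lemma~\ref{lem:Stz}, gives
\EQN{
 \|\diff\x_\pa\|_{[z_0;\ta_k,\ta_{k+1};T_2]} \lec (\ti\de(B_k+\de_*))^{4/7}\cN_1^{3/7} \lec \ti\de^{4/7}\cN_1^{3/7}M^{4/7}\de_*^{1/7}.
}
Summing over $k$ via subadditivity (Lemma~\ref{lem:subadd}) then produces
\EQN{
 \|\diff\x_\pa\|_{[z_0;T_0,T_1;T_2]} \le \sum_{k=0}^{K-1}\|\diff\x_\pa\|_{[z_0;\ta_k,\ta_{k+1};T_2]} \le C'(\cN_1,\cN_2)\de_*^{1/7},
}
so setting $\de_*(\cN_1,\cN_2,\e) := (\e/C')^7$ (intersected with the smallness hypotheses $\de_* \lec \ti\de(\cN_1)$ needed by Lemma~\ref{lem:over river}) yields the conclusion; continuity and monotonicity in each of $\cN_1,\cN_2$ are visible from the explicit dependence of $C'$, $K$, and $\ti\de$. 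The main obstacle is precisely the sublinear $\de^{1/7}$ dependence of the short-time seminorm bound, and the critical move is to isolate the underlying Lipschitz estimate in $L^4L^{24/7}$, perform the iteration there, and spend the interpolation loss only once rather than compound it across the $K$ subintervals.
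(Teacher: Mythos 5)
Your iteration in $L^4_tL^{24/7}_x$ is a sensible alternative, but your framing contains a factual error and one definition is off; let me address both.

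The assertion that ``a naive iteration of the seminorm bound\ldots is doomed'' is simply wrong, and in fact that naive iteration is precisely what the paper does. The paper sets $\de_{j+1}:=\de_j+C(\cN_0\cN_1)^{3/7}\ti\de^{4/7}\de_j^{1/7}$ and iterates over the fixed number $N$ of subintervals. The point is that $N$ and $\ti\de$ are chosen \emph{before} $\de_*$, depending only on $\cN_1,\cN_2$; once $N$ is frozen, the composite map $\de_0\mapsto\de_{N+1}$ is a finite composition of continuous increasing maps that all vanish at the origin, so $\de_{N+1}\to 0$ as $\de_0\to 0$. The sublinear exponent $\de_j^{1/7}$ degrades the dependence (the resulting $\de_*$ is of order $\e^{7^N}$ rather than polynomial in $\e$), but the lemma asks only for existence of a positive $\de_*(\cN_1,\cN_2,\e)$, not for a good rate. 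There is no residual that ``does not vanish''; the sublinear term $\ti\de^{4/7}\de^{1/7}$ certainly tends to $0$ with $\de$ for fixed $\ti\de$.

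Your own route — isolating the genuinely Lipschitz $L^4_tL^{24/7}_x$ turn-off estimate inside the proof of Lemma~\ref{lem:over river}, iterating that, and interpolating into $\ST$ once at the end — does work and has the (minor, here irrelevant) quantitative advantage of a polynomial $\de_*\sim\e^7$. But your definition $B_k:=\|\diff\x_\pa[z_0,\ta_k]\|_{L^4_tL^{24/7}_x(\ta_k,T_2)}$ must be replaced by the norm over $(\ta_k,T_1)$ (or just $(\ta_k,\ta_{k+1})$). With the interval extended to $T_2$, your bound $B_0\lec\|\diff\x_\pa[z_0,T_0]\|_{\ST}^{1/4}\cN_0^{3/4}$ is not available: the hypothesis of the lemma controls $\|\diff\x_\pa[z_0,T_0]\|_{\ST}$ only on $(T_0,T_1)$, and on $(T_1,T_2)$ the linear evolution is bounded by $\cN_{1/2}$, not by any power of $\de_*$. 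With $B_k$ restricted to $(\ta_k,T_1)$ the recursion $B_{k+1}\le(1+C\ti\de)B_k+C\ti\de\|\diff z_\pa\|_{L^4}$ still follows from the turn-off estimate (whose right-hand side involves the $L^4_tL^{24/7}_x$ norm only on $(\ta_k,\ta_{k+1})$, even though its left-hand side extends to $T_2$), and $B_0\lec\de_*^{1/4}\cN_0^{3/4}$ holds. With that correction and the above retraction of the ``doomed'' claim, your argument is a legitimate alternative to the paper's.
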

\begin{proof}
For any $N\in\N$, $(T_0,T_1)$ is decomposed into subintervals $I_0\etc I_N$ such that 
\EQ{
 \forall j=0\etc N,\pq \|\x_0\|_{\ST(I_j)} \le 2N^{-1/4}\cN_2=:\ti\de.}
Let $I_j=:(S_j,S_{j+1})$. If $\ti\de\cN_{1/2}^3\ll 1$, then we can apply Lemma \ref{lem:smallsol} starting from $S_j$, and we obtain 
\EQ{
 \|\x_0[z_0,S_j]\|_{\ST(I_j)} \sim \|\x_0\|_{\ST(I_j)} \le \ti\de.}
Suppose that for some $\de_0>0$, 
\EQ{
 \|\diff\x_\pa[z_0,S_0]\|_{\ST(S_0,T_1)}+ \|\diff z_\pa\|_{L^4(S_0,T_1)} \le \de_0 \le \ti\de, \pq \ti\de \cN_{1/2}^3 \ll 1.}
Then $\|\x_1[z_0,T_0]\|_{\ST(S_0,S_1)} \lec \ti\de$ 
and we can apply Lemma \ref{lem:over river} on $I_0$. Then 
\EQ{
 \|\diff\x_\pa\|_{[z_0;S_0,S_1;T_2]}
 \le C(\cN_0\cN_1)^{3/7}\ti\de^{4/7}\de_0^{1/7}.}
Let $\de_1:=\de_0+C(\cN_0\cN_1)^{3/7}\ti\de^{4/7}\de_0^{1/7}$, then 
\EQ{
 \|\diff\x_\pa[z_0,S_1]\|_{\ST(S_1,T_1)}+ \|\diff z_\pa\|_{L^4(S_1,T_1)} \le \de_1.}
Hence if $\de_1\le\ti\de$ then we can repeat the same thing on $I_1$. 
Define the sequence $\de_j$ for $j=0\etc N$ inductively from $\de_0$ by 
\EQ{
 \de_{j+1}:=\de_j + C(\cN_0\cN_1)^{3/7}\ti\de^{4/7}\de_j^{1/7}.}
Given $\cN_1$ and $\cN_2$, we can determine $\ti\de$ and $N$ such that 
\EQ{
 2N^{-1/4}\cN_2 \le \ti\de \ll \cN_1^{-3} \le \cN_{1/2}^{-3}.} 
Then there is $\de_*=\de_*(\cN_1,\cN_2,\e)>0$ such that 
\EQ{
 \de_0\le\de_* \implies \de_{N+1} < \min(\ti\de,\e).}
Then for $\de_0\le\de_*$, we can iterate the above estimate for all $j$ to get
\EQ{
 \|\diff\x_\pa\|_{[z_0;T_0,T_1;T_2]}
 \pt\le \sum_{j=0}^N \|\diff\x_\pa\|_{[z_0;S_j,S_{j+1};T_2]} 
 \pr\le \sum_{j=0}^N C(\cN_0\cN_1)^{3/7}\ti\de^{4/7}\de_j^{1/7}=\de_{N+1}-\de_0<\e,}
where we used the subadditivity for consecutive intervals by Lemma \ref{lem:subadd}. 
\end{proof}

\section{Nonlinear profile decomposition}
We are now ready to develop a profile decomposition for NLS \eqref{NLSP} in the $(z,\x)$ coordinate, i.e. the equation \eqref{eq zxi}. 
Let $I\in\sI^\N$ and $u\in C(I;H^1[\mu_p])$ be a sequence of solutions of \eqref{NLSP} in the mass region of the $(z,\x)$-coordinate. 
Put 
\EQ{
 I_n=:(\U T_n,\ba T_n)} 
for each $n\in\N$. 
We can uniquely write $u=\Phi[z]+R[z]\x$ by Lemma \ref{lem:decop2Phi}, where $(z,\x)\in C(I;\C\times H^1)$ is a sequence of solutions for \eqref{eq zxi}. Suppose 
\EQ{ \label{asm on un}
 \cN_1:=\Sup \BR{\|z\|_{L^\I_t(I)}+\|u\|_{L^\I_t(I;H^1)}}<\I,}
where the $z$ part is uniformly bounded by $\sqrt{\mu_p}$, so that we can omit it. 
Then we have $z\in\SBC(I)$, since the smallness of $|z|$ is already in Lemma \ref{lem:decop2Phi}, while \eqref{SBC} follows from a uniform bound on $|\dot z|$ (depending on $\cN_1$), easily observed in the equation \eqref{eq zxi} using $\|\y\|_{H^1}\lec\cN_1$, $H^1_x\subset L^6_x$, and the compactness of $\ba{D_p}\subset D_b$ for the $z$ dependence.
 By the $L^2$ conservation 
\EQ{
 \cN_0:=\Sup \BR{\|z\|_{L^\I_t(I)}+\|u\|_{L^\I_t(I;L^2)}} \lec \sqrt{\mu_p} \ll 1.}
Similarly for $\te\in[0,1]$, put
\EQ{
 \cN_\te:=\Sup\BR{\|z\|_{L^\I_t(I)}+\|u\|_{L^\I_t(I;H^\te)}}
 \le \cN_0^{1-\te}\cN_1^\te.}

For any $s\in I$, we can apply the linearized profile decomposition: Lemma \ref{lem:Lprof} to the sequence $\x(s)$. Passing to a subsequence, we have for each $J<J^*$, 
\EQ{ \label{def LPD}
 \x[z,s]=\la^{[0,J)}+\ga^J, \pq  \pq \la^j:=\x[z,s][z,s^j]\II,}
where the following abbreviation is used: for any interval $I$, 
\EQ{ \label{def laI}
 \la^I:=\sum_{j\in I\cap \Z}\la^j.}
Extracting a subsequence if necessary, we may assume 
\EQ{ \label{sj-s}
 s^j-s\to\s^j\I,\pq \exists\s^j\in\{+,-\}} 
for each $0<j<J^*$, and, since \eqref{eq zxi} implies that $(z,\x)$ is weakly equicontinuous, 
\EQ{
 (z,\x)(s^j+t) \to \exists (z_\I^j,\x_\I^j)(t) \IN{\C\times\weak{H^1}},}
locally uniformly on $I-s^j$. Put
\EQ{
 I^j_\I:=\liminf(I-s^j), \pq I^j:=I^j_\I+s^j.}
\eqref{sj-s} implies $I^j_\I\supset\{-\I<\s^j t\le 0\}$ for $j>0$. For $j=0$, we have $I^0_\I\supset[0,\I)$ if $\ba T-s\to\I$ and $I^0_\I\supset(-\I,0]$ if $\U T-s\to-\I$. Note that if $|I_n|$ is bounded then the decomposition is trivial, i.e. $J^*=1$. In either case, $I^j\ni s^j,s^0=s$. By the property of $\SBC$, we can extend $z_\I^j$ to $\SBC(\R)$. 
By the subcritical nature of NLS, it is easy to see that the weak limit $(z_\I^j,\x_\I^j)$ is a solution of \eqref{eq zxi} in $C(I_\I^j;\C\times P_cH^1)$. 
In other words, 
\EQ{
 u_\I^j:=\Phi[z_\I^j]+R[z_\I^j]\x_\I^j}
is a solution of \eqref{NLSP} on $I_\I^j$. 
Using that $R[z]-1$ is compact on $H^1$, together with the weak convergence of the linearized profiles, we have 
\EQ{
 u(s)\pt=\Phi[z_\I^0(0)]+R[z_\I^0(0)]\x(s)+o(1)
 \pr=u_\I^0(0)+\la^{(0,J)}(s)+\ga^J(s)+o(1)\pq\IN{H^1},}
and, using the orthogonality as well, 
\EQ{ \label{ME decop}
 \pt \bM(u)=\bM(u_\I^0)+\sum_{0<j<J}\bM(\la^j(s))+\bM(\ga^J(s))+o(1),
 \pr \bE(u)=\bE(u_\I^0)+\sum_{0<j<J}\bH^0(\la^j(s))+\bH^0(\ga^J(s))+o(1).}

\U{The nonlinear profile} $\La^j\in C(I^j;P_cH^1)$ is defined by
\EQ{ \label{def La}
 \La^j(t):=\x_\I^j(t-s^j).}
Also put
\EQ{
 \pt \fy^j_\I:=\lim \nu(s^j)=\la^j(s^j)\in P_cH^1,\pq \la_\I^j:=\fy^j_\I[z_\I^j,0]\in C(\R;P_cH^1),
 \pr z_j(t):=z(s^j+t), \pq z\II^j(t):=z_\I^j(t-s^j), \pq \la\II^j(t):=\la_\I^j(t-s^j),}
such that $z_j=z_\I^j+o(1)$ in $C(\R)$, and using Lemma \ref{lem:Stzlim}, \eqref{aproxprof},  
\EQ{ \label{zej stz}
 \la^j=\fy^j_\I[z,s^j]=\la\II^j+o(1) \IN{\Stz^1(I)},}
while $(z\II^j,\La^j)$ is a sequence of solutions of \eqref{eq zxi} on $I^j$. 

\U{The nonlinear remainder} $\Ga^J\in C(I;H^1)$ is defined for each $J$ by the same sequence of equations as $\x$, with the same initial data as $\ga^J$:
\EQ{ \label{eq Ga}
 i\dot \Ga^J + H \Ga^J = B[z]\Ga^J + \ti N(z,\Ga^J), \pq \Ga^J(s)=\ga^J(s).}
Since $\Lim_{J\to J^*}\limsup\|\ga^J\|_{\ST(I)}=0$, Lemma \ref{lem:smallsol} ensures the unique existence of $\Ga_n^J$ for large $J$ and large $n$, satisfying 
\EQ{ \label{Ga bd}
 \|\Ga_n^J\|_{\Stz^\te(I_n)}\lec \cN_\te, \pq \|\Ga_n^J-\ga_n^J\|_{\ST(I_n)} + \|\Ga_n^J\|_{[z_n;s_n,\ba T_n;\ba T_n]} \ll\|\ga_n^J\|_{\ST(I_n)}.}
We have $\ga^J(s^j)\weakto 0$ for each $j< J$. 
Moreover, for large $J$ and for $0\le j<J$, 
\EQ{ \label{Ga vanish st}
 \Ga^J(s^j)\weakto 0,\pq 0<\forall \ta<\I, \pq \|\Ga^J\|_{\ST(|t-s^j|<\ta)}+\|\ga^J\|_{\ST(|t-s^j|<\ta)}\to 0.}
\begin{proof}[Proof of \eqref{Ga vanish st}]
Let $X^j$ be a sequence of weighted norms on $I$ defined by
\EQ{
 \|f\|_{X^j} := \sup_{t\in I} \LR{t-s^j}^{-\de}\|f(t)\|_{(L^4+L^\pp)_x(\R^3)},}
where $\de>0$ is fixed such that $0<\de<1/2-3/\pp$. 
By Lemma \ref{lem:Stzlim}, \eqref{profaprox}, we have $\|\ga^J(t)\|_{L^4_x}\to 0$ locally uniformly around $t=s^j$, which implies $\|\ga^J\|_{X^j}\to 0$, thanks to the decaying weight. 
Suppose that $\s^j=+$, namely $s^j-s^0\to\I$. Put $F^J:=B[z]\Ga^J+\ti N(z,\Ga^J)$. Then by the $L^p$ decay estimate on $e^{itH}P_c$, we have at any $t\in I_n$ satisfying $t>s_n$, 
\EQ{ \label{weighted Ga est}
  \|\Ga_n^J(t)-\ga_n^J(t)\|_{L^4_x+L^\pp_x}
 \pt\lec \int_{s_n^0}^{t}f(t-t')\|F_n^J(t')\|_{L^{4/3}_x\cap L^{\pp'}_x}dt',
 \prq f(t):=\min(|t|^{-3(1/2-1/\pp)},|t|^{-3/4}), }
and, by H\"older and Sobolev, 
\EQ{
 \|F_n^J\|_{L^{4/3}_x\cap L^{\pp'}_x} \pt\lec \|\Ga_n^J\|_{L^4_x+L^\pp_x}(|z_n|^2+\|\Ga_n^J\|_{L^4_x\cap L^2_x}^2)
 \pr\lec \{\cN_0^2+\|\Ga_n^J\|_{L^\I_tL^4_x(I_n)}^2\}\|\Ga_n^J\|_{L^4_x+L^\pp_x}.}
By Lemma \ref{lem:smallsol}, we have 
\EQ{
 \|\Ga_n^J\|_{L^\I_tL^4_x(I_n)} \le \|\ga^J_n\|_{L^\I_tL^4_x(I_n)}+C\cN_{3/4}\|\ga^J_n\|_{\ST(I_n)}(\cN_0+\|\ga_n^J\|_{\ST(I_n)}) }
Taking $J$ and $n$ larger if necessary, we may assume that the right hand side is bounded by $\cN_0 \ll 1$. Inserting this to the above estimate and then to \eqref{weighted Ga est} yields
\EQ{
 \pn\|\Ga_n^J(t)-\ga_n^J(t)\|_{L^4_x+L^\pp_x} 
 \pt\lec \cN_0^2 \|\Ga_n^J\|_{X_n^j}\int_{\R}f(t-t')\LR{t'-s_n^j}^{\de}dt'
 \pr\lec \cN_0^2\|\Ga_n^J\|_{X_n^j}\LR{t-s_n^j}^{\de},} 
where in estimating the integral in $t'$, our choice of $\de$ implies $-3(1/2-1/\pp)+\de<-1$, which ensures the integrability for $t'\to\pm\I$. 
The estimate in the cases $t<s_n$ and $\s^j=-$ is the same, as well as for $j=0$. Thus we obtain
\EQ{
 \lim\|\Ga^J\|_{X^j} \lec \lim\|\ga^J\|_{X^j}= 0,}
for large $J$. 
By the uniform $H^1$ bound, this is equivalent to $\Ga^J(t)\weakto 0$ weakly in $H^1$, locally uniformly around $s^j$. Interpolation with \eqref{Ga bd} yields the other part. 
\end{proof}

Let us now concentrate on the estimate on the time interval $t>s_n$, assuming 
\EQ{
  \ba T_n-s_n \to\I,\pq(n\to\I)}
since otherwise uniform Strichartz bound for $\x_n$ on $t>s_n$ is trivial. The restriction to $t>s_n$ allows us to ignore the profiles with $s^j-s^0\to-\I$. 

Fix a finite $J\le J^*$, so large that \eqref{Ga vanish st} holds. 
After neglecting those profiles with $s^j-s^0\to-\I$, and reordering the profiles\footnote{This reordering can not be performed before fixing $J<\I$, since more and more linear profiles may well appear between the previous profiles as $J\to\I$, which is a typical dispersive behavior of the remainder $\ga^J$.},  we may assume for $0<j<J$
\EQ{
 s^j-s^{j-1}\to\I.}
Since $J$ is now fixed, we can no longer gain a small factor by sending $J\to\I$. Instead another parameter $0<\ta\to\I$ is introduced, decomposing the time intervals
\EQ{
 (s,\ba T)=\Cu_{0\le j<J}(s^j_-,s^j_+)\cup(s^j_+,s^{j+1}_-),} 
where $s^j_\pm\in\R^\N$ are defined for each $j$ by 
\EQ{ \label{def sjpm}
 \pt s^j_-:=\max(s^j-\ta,s), 
 \pq s^j_+:=\min(s^j+\ta,\ba T), \pq  s^J_\pm:=\ba T.}
Henceforth, $o_\ta$ denotes any sequence of real numbers satisfying
\EQ{ \label{def ota}
 \pt X(\ta) = o_\ta \iff \lim_{\ta\to\I}\limsup_{n\to\I} X_n(\ta)=0.}
By the uniform integrability \eqref{zej stz} of the linearized profiles, and their separation $s^{j}-s^{j-1}\to\I$, we have for $0\le j< J$ 
\EQ{ \label{zej st away}
 \pt  \sum_{j<k<J} \|\la^k\|_{\ST(s^0,s^j_+)}+\sum_{0\le k<j}\|\la^k\|_{\ST(s^j_-,\ba T)}=o(1),
 \pr \|\la^j\|_{\ST(s^0,s^j_-)}+ \|\la^j\|_{\ST(s^j_+,\ba T)}=o_\ta.}
The following is the main property of the nonlinear profile decomposition.
\begin{lem} \label{lem:Nprofile}
In the above setting, let $0\le l\le J$ and suppose that $\x_\I^j$ is scattering with $z^j_\I$ as $t\to\I$ for $0\le j<l$. Let $\ell:=\min(l,J-1)$. Then 
\begin{enumerate}
\item For $0\le j<J$, we have 
\EQ{
 \|\x-\La^j\|_{[z;s^j_-,s^j_+;\ba T]}+\|\Ga^J\|_{[z;s^j_-,s^j_+;\ba T]}=o(1).}
\item For $0\le j\le\ell$, we have  
\EQ{ \label{laga before}
 \|(\x-\Ga^J)[z,s^j_-]-\la^{[j,J)}\|_{\ST(s^j_-,\ba T)}+\|\La^j[z,s^j_-]_>-\la^j\|_{\Stz^1(I)}=o_\ta,}
\EQ{ \label{La before}
 \|\La^j\|_{[z;s^0,s^j_-;\ba T]}=o_\ta.}
\item For $0\le j<l$, we have 
\EQ{
 \|\x-\Ga^J\|_{[z;s^j_+,s^{j+1}_-;\ba T]}+\|\La^j\|_{[z;s^j_+,\ba T;\ba T]}=o_\ta.}
\item For $0<j\le\ell$, we have $\|\x^j_\I-\la^j_\I\|_{\Stz^1(-\I,-\ta)}\to 0$ as $\ta\to\I$. In other words, $\x^j_\I$ scatters as $t\to-\I$ and the scattering profile is $\la^j_\I$. 
\end{enumerate}
Moreover, $\x$ is bounded in $\Stz^1(s,s^l_+)$. 
\end{lem}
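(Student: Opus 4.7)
My plan is to prove (i)--(iv) by finite induction on $j \in \{0, 1, \ldots, \ell\}$, using Lemma \ref{lem:over mountain} on the ``mountain'' windows $(s^j_-, s^j_+)$ where the nonlinear profile $\La^j$ may be large, and Lemma \ref{lem:over river} on the ``river'' corridors $(s^j_+, s^{j+1}_-)$ where only the remainder $\Ga^J$ and the far-time tails of the other $\la^k$ persist. The subadditivity Lemma \ref{lem:subadd} assembles the mountain and river bounds into a global estimate across $(s, s^l_+)$. The hypothesis that $\x_\I^j$ scatters with $z_\I^j$ as $t \to +\I$ for $j < l$ is what closes the $j$-th river, so the induction cannot proceed beyond $\ell$.

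\textbf{Induction.} For the base case $j = 0$, the identity $s^0_- = s = s^0$ makes \eqref{laga before} and \eqref{La before} tautological, and (i) follows from Lemma \ref{lem:over mountain} applied to $\x$ versus $\La^0$: the initial datum mismatch $\x(s^0) - \La^0(s^0) = \la^{(0, J)}(s) + \ga^J(s) + o(1)$ in $H^1$ has small $\ST$ norm on $(s^0, s^0_+)$ by \eqref{zej st away} and \eqref{Ga vanish st}, while $z - z_\I^0(\cdot - s^0)$ is $o(1)$ in $L^4(s^0, s^0_+)$ since $z \in \SBC$ and $z_n(s^0_n + \cdot) \to z_\I^0$ locally uniformly. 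For the inductive step assuming the claims up to $j < \ell$: on the river $(s^j_+, s^{j+1}_-)$ I compare $\x$ with $\Ga^J + \la^{[j+1, J)}$ via Lemma \ref{lem:over river}. The initial datum at $s^j_+$ is controlled by combining (ii) at $j$, the mountain bound on $\x - \La^j$ from (i) at $j$, and the scattering hypothesis for $\x_\I^j$, which via Lemma \ref{lem:scatxi} with $\ti z = z$ gives $\|\La^j\|_{[z; s^j_+, \ba T; \ba T]} = o_\ta$. This establishes (iii) at $j$ and extends the $\Stz^1$ bound up to $s^{j+1}_-$. On the next mountain $(s^{j+1}_-, s^{j+1}_+)$, the linearized profiles $\la^k$ for $k \le j$ have dispersed to $o_\ta$ in $\ST(s^{j+1}_-, \ba T)$ by \eqref{zej st away}, while $\la^{j+1}$ propagated by $B[z]$ is within $o_\ta$ of $\La^{j+1}$ by Lemma \ref{lem:Stzlim} applied to $\fy^{j+1}_\I$. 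A final invocation of Lemma \ref{lem:over mountain} on this mountain delivers (i), both halves of \eqref{laga before}, and \eqref{La before} at $j + 1$; the latter follows from Lemma \ref{lem:smallsol}(II) applied to $\La^{j+1}$ on $(s^0, s^{j+1}_-)$, using the $\ST$-smallness of $\la^{j+1}$ there that again comes from \eqref{zej st away}.

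\textbf{Claim (iv) and the principal difficulty.} Claim (iv) is read off from the river preceding the $j$-th mountain: translating by $s^j$, the (iii) bound at $j-1$ combined with (i) at $j$ and with $\la^j$ agreeing with $\la_\I^j(\cdot - s^j)$ up to $o_\ta$ in $\Stz^1$ (Lemma \ref{lem:Stzlim}, \eqref{aproxprof}) yields $\|\x_\I^j - \la_\I^j\|_{\Stz^1(-\I, -\ta)} \to 0$ as $\ta \to \I$, which is exactly the backward scattering of $\x_\I^j$ in the sense of Lemma \ref{lem:scatxi}. The most delicate point in the whole argument is that the river intervals can be arbitrarily long, so the quadratic terms in $\ti N$ threaten to amplify even a small initial mismatch; the remedy is the design of the semi-norm $\|\cdot\|_{[z; T_0, T_1; T_2]}$ together with the non-admissible Strichartz estimates of Lemma \ref{lem:nonad}, which supply a Lipschitz bound at the level of $\ST$ (rather than $L^2_t$) and make the iteration close. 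A secondary subtlety is matching the actual potential $z$ against the autonomous asymptotic $z_\I^j(\cdot - s^j)$ on each mountain: uniform continuity from $z \in \SBC$ controls $\|z - z_\I^j(\cdot - s^j)\|_{L^4(s^j_-, s^j_+)}$ in terms of $\ta$ and the modulus of continuity, feeding into the smallness parameter $\de_*$ of Lemma \ref{lem:over mountain}.
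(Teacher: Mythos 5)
Your proposal is correct and follows essentially the same route as the paper: finite induction on $j$, with Lemma \ref{lem:over mountain} on the windows $(s^j_-,s^j_+)$, Lemma \ref{lem:over river} on the corridors $(s^j_+,s^{j+1}_-)$, Lemma \ref{lem:scatxi} closing each corridor via the forward-scattering hypothesis, and Lemma \ref{lem:subadd} together with \eqref{[z] mono} assembling the global estimate. Two points of precision are worth recording. First, Lemma \ref{lem:over river} compares two solutions of the \emph{nonlinear} equation \eqref{eqxi}, so on the river one must take $\x_0=\Ga^J$ (not $\Ga^J+\la^{[j+1,J)}$, which does not solve \eqref{eqxi}); the tails $\la^{[j+1,J)}$ enter only through the estimate on the initial-datum mismatch $\|(\x-\Ga^J)[z,s^j_+]\|_{\ST(s^j_+,s^{j+1}_-)}$, which is controlled by $\|(\x-\Ga^J)[z,s^j_+]-\la^{[j+1,J)}\|_{\ST}$ together with the $\ST$-smallness of $\la^{[j+1,J)}$ on the corridor from \eqref{zej st away}. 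Second, for claim (iv) the river estimate by itself gives only $\lim_{\ta\to\I}\|\x_\I^j[z_\I^j,-\ta]-\la_\I^j\|_{\ST(-\ta,0)}=0$, which concerns the \emph{linearized} evolution of $\x_\I^j(-\ta)$, not $\x_\I^j$ itself; one must still bootstrap through Lemma \ref{lem:smallsol} to obtain $\|\x_\I^j\|_{\ST(-\ta,-\ta_*)}\lec\|\x_\I^j[z_\I^j,-\ta]\|_{\ST(-\ta,-\ta_*)}$ uniformly in $\ta$, send $\ta\to\I$ to conclude $\|\x_\I^j\|_{\ST(-\I,-\ta_*)}<\I$, apply the implication (ii)$\Rightarrow$(i) of Lemma \ref{lem:scatxi}, and then identify the scattering datum with $\la_\I^j(0)$ -- your write-up treats this as immediate, but the chain is correct once made explicit.
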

\begin{proof}
For the first term of (i), the locally uniform convergence of $(z,\x)(t+s^j)\to(z^j_\I,\x^j_\I)$ implies, using Lemma \ref{lem:Stzlim}, \eqref{profaprox}, 
\EQ{
 \|z-z^j_{(\I)}\|_{L^4_t(s^j_-,s^j_+)}+\|\x[z,s^j_-]-\La^j[z,s^j_-]\|_{\ST(s^j_-,s^j_+)}=o(1).}
Then by Lemma \ref{lem:over mountain}, we obtain 
\EQ{
 \|\x-\La^j\|_{[z;s^j_-,s^j_+;\ba T]}=o(1).}
For the second term of (i), using \eqref{use wave norm}, Lemma \ref{lem:smallsol} and \eqref{Ga vanish st}, we obtain   
\EQ{ \label{Ga s-2s+}
 \|\Ga^J[z,s^j_-]-\Ga^J[z,s^j_+]\|_{\ST(s^j_+,\ba T)}
 \le \|\Ga^J\|_{[z;s^j_-,s^j_+;\ba T]} \ll \|\Ga^J\|_{\ST(s^j_-,s^j_+)}=o(1).}

\eqref{La before} follows from \eqref{laga before}, since using \eqref{ker [z]} and \eqref{[z] bd Stz}, we have
\EQ{
 \|\La^j\|_{[z;s^0,s^j_-;\ba T]} \pt\le \|\La^j-\la^j\|_{[z;s^0,s^j_-;\ba T]} + \|\la^j\|_{[z;s^0,s^j_-;\ba T]}
 \pr\lec \|\La^j-\la^j\|_{\Stz^1(s^0,s^j_-)} 
 \pn\le\|\La^j[z,s^j_-]_>-\la^j\|_{\Stz^1(I)}.}
The second term of (iii) is bounded using Lemma \ref{lem:scatxi}, \eqref{scatt decay} with the scattering of $\x_\I^j$ as $t\to\I$
\EQ{
 \|\La^j\|_{[z;s^j_+,\ba T;\ba T]}\le\|\x^j_\I\|_{[z_j;\ta,\I;\I]}=o_\ta.}

The remaining estimates are proved by induction on $j$. 
For $j=0$, $\eqref{laga before}=0$ by the definition and $s^0_-=s^0$.  
Assume \eqref{laga before} for some $j<l$ as an induction hypothesis. 
By the scattering of $\x^j_\I$ for $t\to\I$, Lemma \ref{lem:scatxi} implies 
\EQ{ \label{xI at sj+}
 \|\La^j[z,s^j_+]\|_{\ST(s^j_+,\ba T)}
 =\|\x_\I^j[z_j,\ta]\|_{\ST(\ta,\ba T-s^j)} = o_\ta.}
Combining it with \eqref{laga before}, \eqref{Ga s-2s+} and (i), using \eqref{use wave norm}, we obtain 
\EQ{ \label{xn at sj+}
 \pt\|(\x-\Ga^J)[z,s^j_+]-\la^{[j+1,J)}\|_{\ST(s^j_+,\ba T)}
 \pr\le 
  \|(\x-\Ga^J)[z,s^j_-]-\la^{[j,J)}\|_{\ST(s^j_+,\ba T)}
   +\|\La^j[z,s^j_-]-\la^j\|_{\ST(s^j_+,\ba T)}
  \prQ+\|\Ga^J[z,s^j_-]-\Ga^J[z,s^j_+]\|_{\ST(s^j_+,\ba T)} + \|\La^j[z,s^j_+]\|_{\ST(s^j_+,\ba T)}
  \prQ+\|(\x-\La^j)[z,s^j_-]-(\x-\La^j)[z,s^j_+]\|_{\ST(s^j_+,\ba T)}
 \pr\le o_\ta+\|\x-\La^j\|_{[z;s^j_-,s^j_+;\ba T]} = o_\ta.}
Restricting it and using \eqref{zej st away}, we obtain 
\EQ{
 \pt\|(\x-\Ga^J)[z,s^j_+]\|_{\ST(s^j_+,s^{j+1}_-)}
 = o_\ta.}
This and the smallness of $\Ga_n^J$ in \eqref{Ga vanish st} allow us to apply Lemma \ref{lem:over river} to the difference of $\x_n$ and $\Ga_n^J$ for large $\ta$ and large $n$, with the same soliton part $z_n$. Then the above decay of the linearized solutions leads to the estimate on the first term of (iii): 
\EQ{
 \|\x-\Ga^J\|_{[z;s^j_+,s^{j+1}_-;\ba T]}=o_\ta.}

If $k:=j+1<J$, then combining the above with \eqref{xn at sj+}, using \eqref{use wave norm}, we obtain 
\EQ{ 
 \pt \|(\x-\Ga^J)[z,s^k_-]-\la^{[k,J)}\|_{\ST(s^k_-,\ba T)}
 \pr\le \|(\x-\Ga^J)[z,s^j_+]-\la^{[k,J)}\|_{\ST(s^k_-,\ba T)}+\|\x-\Ga^J\|_{[z;s^j_+,s^k_-,\ba T]} = o_\ta,}
which is the first term of \eqref{laga before} for $k$. 
Restricting the interval to $(s^k_-,s^k)$, we may discard $\la^{[k+1,J)}$ by \eqref{zej st away}, as well as $\Ga^J[z,s^k_-]$ by \eqref{Ga vanish st}, where we are allowed to linearize $\Ga^J$ by $[z,s^k_-]$ thanks to Lemma \ref{lem:smallsol}(II). Thus we obtain
\EQ{
 \|\x[z,s^k_-]-\la^k\|_{\ST(s^k_-,s^k)}=o_\ta.}
Since $\x(s^k_-)-\la^k(s^k_-) \weakto \x_\I^k(-\ta)-\la_\I^k(-\ta)$, by Lemma \ref{lem:Stzlim}, \eqref{profaprox}, we obtain
\EQ{
 o_\ta \pt=\|(\x_\I^k(-\ta)-\la_\I^k(-\ta))[z,s^k-\ta]\|_{\ST(s^k-\ta,s^k)}
 \pr=\|(\x_\I^k-\la_\I^k)[z_k,-\ta]\|_{\ST(-\ta,0)}.}
Taking the limit and using Lemma \ref{lem:Stzlim}, \eqref{aproxprof} with $z_k\to z_\I^k$,  
\EQ{ \label{x1-z1 fini}
 \lim_{\ta\to\I} \|\x_\I^k[z_\I^k,-\ta]-\la_\I^k\|_{\ST(-\ta,0)}=0.}
Since $\la_\I^k\in\ST(-\I,0)$ by Lemma \ref{lem:Stz}, there is $\ta_*>0$ such that 
\EQ{
 \|\la_\I^k\|_{\ST(-\I,-\ta_*)} + \sup_{\ta>\ta_*}\|\x_\I^k[z_\I^k,-\ta]-\la_\I^k\|_{\ST(-\ta,0)} \ll\cN_{1/2}^{-3}.} 
For $\ta>\ta_*$, we can apply Lemma \ref{lem:smallsol} to $\x_\I^k$ from $t=-\ta$, thereby obtain
\EQ{
 \|\x_\I^k\|_{\ST(-\ta,-\ta_*)} \le 2\|\x_\I^k[z_\I^k,-\ta]\|_{\ST(-\ta,-\ta_*)} \ll \cN_{1/2}^{-3}.}
Sending $\ta\to\I$ implies $\|\x_\I^k\|_{\ST(-\I,-\ta_*)}<\I$, so by Lemma \ref{lem:scatxi}, $\x_\I^k$ scatters with $z_\I^k$ as $t\to-\I$. Hence there exists $\fy_-^k\in H^1$ such that 
\EQ{
 \lim_{\ta\to\I}\|\x_\I^k[z_\I^k,-\ta]-\fy_-^k[z_\I^k,0]\|_{\Stz^1(-\I,0)}= 0.}
Adding this and \eqref{x1-z1 fini} yields
\EQ{
 \|(\fy_-^k-\la_\I^k)[z_\I^k,0]\|_{\ST(-\I,0)}=0,}
which implies $\fy_-^k=\la_\I^k(0)$, hence $\|\x_\I^k-\la_\I^k\|_{\Stz^1(-\I,-\ta)}\to 0$ as $\ta\to\I$. Thus we obtain (iv). 
Since $\x_\I^k=\La^k(t+s^k)$ and $\la_\I^k=\la^k(t+s^k)+o(1)$ in $\Stz^1(I-s^k)$, we obtain 
\EQ{
 \|\La^k[z,s^k_-]_>-\la^k\|_{\Stz^1(I)} \lec \|\La^k-\la^k\|_{\Stz^1(s^0,s^k_-)}=o_\ta,}
which is the second term of \eqref{laga before} for $k=j+1$, hence the induction is complete, finishing the proof for (ii)-(iv). 

Since the profiles $\La^j$ and the remainder $\Ga^J$ are vanishing $o_\ta$ in each other intervals, we obtain, using the subadditivity: Lemma \ref{lem:subadd}, as well as the monotonicity \eqref{[z] mono}, 
\EQ{ 
 \pt\|\x-\La^{[0,\ell]}-\Ga^J\|_{[z;s,s^l_+;\ba T]}
 \pr\le 
  \sum_{0\le j\le\ell} \|\x-\La^j\|_{[z;s^j_-,s^j_+;\ba T]}
  + \sum_{0\le j<l} \|\x-\Ga^J\|_{[z;s^j_+,s^{j+1}_-;\ba T]} + o_\ta
 \pn\le o_\ta.}
Since the left hand side is non-decreasing in $\ta$, we deduce that 
\EQ{ \label{xn approx}
 \ti\x^\ell:=\La^{[0,\ell]}+\la^{(\ell,J)}+ \Ga^J \implies \|\x-\ti\x^\ell\|_{[z;s,s^l_+;\ba T]}=o(1),}
where the linearized solution $\la^{(\ell,J)}$ is added for free, thanks to \eqref{ker [z]}. Using (iv) together with \eqref{zej stz}, as well as the definition of $\Ga^J$ and $\ga^J$,  we have
\EQ{
 \ti\x^\ell(s)\pt=\la^{[0,\ell]}(s)+o(1)+\la^{(\ell,J)}(s)+\ga^J(s)=\x(s)+o(1)\IN{H^1}.} 
Hence \eqref{xn approx} with \eqref{use wave norm} implies 
\EQ{ \label{xn approx ST}
 \|\x-\ti\x^\ell\|_{\ST(s,s^l_+)} \le \|\x-\ti\x^\ell\|_{[z;s,s^l_+ ,\ba T]}+o(1) = o(1),}
so we obtain, using \eqref{zej st away} as well,  
\EQ{
 \pt\|\x\|_{\ST(s,s^l_+)} 
 \le \sum_{0\le j\le\ell} \|\La^j\|_{\ST(s,s^l_+)}+\|\Ga^J\|_{\ST(s,\ba T)}+o(1),}
where each term on the right is bounded by 
\EQ{
 \pt \|\La^0\|_{\ST(s,s^l_+)}\le \|\x^0_\I\|_{\ST(0,\I)}<\I,
 \pr 1\le j<\ell \implies \|\La^j\|_{\ST(s,s^l_+)} \le \|\x^j_\I\|_{\ST(\R)}<\I,
 \pr j=\ell=l<J \implies \|\La^j\|_{\ST(s,s^l_+)} \le \|\x^l_\I\|_{\ST(-\I,\ta)}<\I,
 \pr \|\Ga^J\|_{\ST(s,\ba T)} \le 2\|\ga^J\|_{\ST(s,\ba T)} \ll 1.}
Therefore $\x$ is bounded in $\ST(s,s^l_+)$. 
It is easily upgraded to a uniform bound in $\Stz^1(s,s^l_+)$ as follows. 
Let $s=t_0<t_1<\cdots<t_N=s^l_+$ such that $\|\x\|_{\ST(t_{a-1},t_a)}\le \de$ and $N\de\le \|\x\|_{\ST(s,s^l_+)}+1$ for some small $\de>0$. 
By the Strichartz estimate, we have for each $a$ 
\EQ{
 \|\x\|_{\Stz^1(t_a,t_{a+1})} \lec \|\x(t_a)\|_{H^1}+\|\ti N(z,\x)\|_{L^2_tH^1_{6/5}(t_a,t_{a+1})},}
and the nonlinear term is estimated as before by H\"older
\EQ{ 
 \|\ti N(z,\x)\|_{L^2_tH^1_{6/5}(t_a,t_{a+1})}
 \pt\lec \|\Phi[z]\|_{L^\I_tL^3_x}\|\x\|_{L^4_tH^1_3}\|\x\|_{\ST}+
 \|\x\|_{L^\I_tH^1_x}\|\x\|_{\ST}^2
 \pr\lec (\cN_0\de+\de^2)\|\x\|_{\Stz^1(t_a,t_{a+1})}.}
Hence choosing $\de>0$ small enough, we obtain 
\EQ{
 \|\x\|_{\Stz^1(t_a,t_{a+1})} \le C\|\x(t_a)\|_{H^1} \le C\|\x\|_{\Stz^1(t_{a-1},t_a)}}
for some absolute constant $C>1$, which leads by induction to 
\EQ{
 \|\x\|_{\Stz^1(s,s^l_+)} \le C^N\|\x(s)\|_{H^1} \le C^{C\|\x\|_{\ST(s,s^l_+)}}\cN_1}
where the right hand side is bounded as shown above. 
\end{proof}

The same argument works on the other time direction $(\U T,s)$, under the scattering assumption of $\x_j^\I$ with $z_j^\I$ as $t\to-\I$ for $\s^j=-$ and $j=0$. 
In order to consider the whole interval $(\U T,\ba T)$, we should assume the scattering of $\x_j^\I$ as $t\to\s^j\I$ for $\s^j=\pm$, and of $\x_0^\I$ as $t\to\pm\I$. 
A more precise statement is as follows. 

\begin{thm} \label{thm:NPD}
Let $s\in I\in\sI^\N$ and $C(I;H^1[\mu_p])\ni u=\Phi[z]+R[z]\x$ be a sequence of solutions for \eqref{NLSP}, written in the coordinate in Lemma \ref{lem:decop2Phi}. Suppose that $u(s)$ is bounded in $H^1_x$ and let 
\EQ{
 \x[z,s]=\sum_{0\le j<J}\la^j + \ga^J, \pq \la^j=\x[z,s][z,s^j]\II}
be the linearized profile decomposition in Lemma \ref{lem:Lprof} (for a subsequence). 
If a finite $J\le J^*$ is fixed large enough, then we have the following. 

Suppose that $\sup_n\sup_{t\in I_n'}\|u_n(t)\|_{H^1_x}<\I$ for a sequence of subintervals $I'_n\subset I_n$ satisfying $s\in I'$, and let (after passing to a further subsequence if necessary) 
\EQ{
 t\in I^j_\I:=\Cu_{n\in\N}\Ca_{m\ge n}(I'_m-s^j_m) \implies (z^j_\I,\x^j_\I)(t):=\Lim_{n\to\I}(z_n,\x_n)(s^j_n+t)} 
be the weak limit in $\C\times H^1_r$. Assume that $\x^j_\I$ scatters with $z^j_\I$ as $t\to\s\I$ for each $j<J$ and $\s\in\{+,-\}$ satisfying $\s I^j_\I\supset[0,\I)$ and $\lim\s(s-s^j)\le 0$. 

Then $\sup_n\|\x_n\|_{\Stz^1(I'_n)}<\I$. 
Moreover, for each $j<J$ and $\s\in\{+,-\}$ satisfying $0\in I^j_\I$ and $\s(s-s^j)\to\I$, $\x^j_\I$ scatters with $z^j_\I$ as $t\to\s\I$ and 
\EQ{ \label{backscat}
 \lim_{n,T\to\I}\|\x^j_\I-\la^j_n(t+s^j_n)\|_{\Stz^1(\s(T,\I)\cap(I_n'-s^j_n))}=0.}
\end{thm}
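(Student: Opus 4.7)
The statement is the bidirectional formulation of Lemma \ref{lem:Nprofile}, and the plan is to apply that lemma once in the forward direction on $(s_n,\sup I'_n)$ and once in the backward direction on $(\inf I'_n,s_n)$, with $I'_n$ replacing $I_n$ throughout. The argument of Lemma \ref{lem:Nprofile} uses only the uniform $H^1$ bound on $u_n$ and the geometry of the profile centers $s^j_n$ relative to $s_n$, both of which transfer verbatim to the subinterval $I'_n$ under the theorem's hypothesis; gluing the two resulting Strichartz bounds then yields $\sup_n\|\x_n\|_{\Stz^1(I'_n)}<\I$.

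For the forward application, after discarding profiles with $s^j-s\to-\I$ and performing the reordering $s^j-s^{j-1}\to\I$ from the proof of Lemma \ref{lem:Nprofile}, the surviving profiles are precisely those with $\lim(s-s^j)\le 0$ whose forward limit interval satisfies $I^j_\I\supset[0,\I)$. These are exactly the $(j,\s=+)$ pairs for which the theorem assumes scattering of $\x^j_\I$ as $t\to+\I$, so Lemma \ref{lem:Nprofile} with $l=J$ applies and yields $\|\x_n\|_{\Stz^1(s_n,\sup I'_n)}=O(1)$. The backward application is analogous with $\s=-$, invoking the theorem's scattering hypothesis as $t\to-\I$ for the profiles surviving on that side.

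For the backscattering conclusion \eqref{backscat}, I will read off item (iv) of Lemma \ref{lem:Nprofile}. For any profile with $s^j-s\to+\I$, i.e.\ $\s(s-s^j)\to\I$ with $\s=-$, the forward application gives $\|\x^j_\I-\la^j_\I\|_{\Stz^1(-\I,-\ta)}\to 0$ as $\ta\to\I$. Using $\la^j=\la\II^j+o(1)$ in $\Stz^1$ from \eqref{zej stz} together with the translation $\la\II^j(t)=\la^j_\I(t-s^j_n)$, this converts into the claim \eqref{backscat} with $\s=-$. The backward application of (iv) handles the profiles with $s-s^j\to+\I$ and $\s=+$ symmetrically; the stationary profile at $s$ does not satisfy $\s(s-s^j)\to\I$ on either side and therefore requires no backscattering statement.

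The main obstacle I expect is the bookkeeping across the two time directions: one must verify that every profile for which Lemma \ref{lem:Nprofile} demands forward (resp.\ backward) scattering in the forward (resp.\ backward) application is covered by the theorem's single unified hypothesis on $(j,\s)$, and that the reordering $s^j-s^{j-1}\to\I$ performed independently on each side does not affect the per-profile conclusion (iv), which is robust under permutation of the profiles. A minor technical point is that the lemma's $I^j_\I$ and the theorem's $I^j_\I$ both refer to the ambient interval ($I_n$ and $I'_n$ respectively) in the same $\liminf$ fashion, so transferring all arguments to the subinterval setting is automatic.
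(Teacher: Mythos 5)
Your proposal is correct and matches the paper's own approach: the paper states Theorem \ref{thm:NPD} as the ``more precise statement'' of Lemma \ref{lem:Nprofile} applied in both time directions, with the only explicit justification being the remark preceding it that ``the same argument works on the other time direction $(\U T,s)$'' under the corresponding backward scattering hypothesis. Your reading of item (iv), the passage from $\la^j_\I$ to $\la^j_n(t+s^j_n)$ via \eqref{zej stz}, and the transfer to the subinterval $I'_n$ are exactly the implicit bookkeeping the paper leaves to the reader.
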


The above statement has nothing to do with the excited state energy, and it is applicable even if some nonlinear profile is not scattering, if the subintervals $I'_n$ are chosen appropriately. 
Note also that $I'_n$ can be chosen depending on the linearized profile decomposition, after fixing $J$. See the next section.

\section{Scattering below the excited energy} 
We are now ready to prove the scattering to the ground states. 
For each $\mu>0$ and $A\in\R$, let $\GS(\mu,A)$ be the totality of global solution $u$ of \eqref{NLSP} satisfying
\EQ{ \label{def GS}
 \bM(u)\le\mu,\pq \bE(u)\le A.}
Let 
\EQ{ \label{def STX}
 \pt ST(\mu,A):=\sup\{\|\x\|_{\ST(0,\I)}\mid \Phi[z]+R[z]\x\in \GS(\mu,A)\},
 \pr \X:=\{(\mu,A)\mid ST(\mu,A) <\I\}.}

Introduce the following partial orders in $\R^2$  
\EQ{ \label{def orders}
 \pt(\mu_1,A_1)\le(\mu_2,A_2) \iff \mu_1\le\mu_2 \tand A_1\le A_2,
 \pr(\mu_1,A_1)\ll(\mu_2,A_2) \iff \mu_1<\mu_2 \tand A_1<A_2.}
The definition of $\X$ implies that for any $(\mu_j,A_j)\in(0,\I)\times\R$, 
\EQ{
 (\mu_1,A_1)\le(\mu_2,A_2) \text{ and } (\mu_2,A_2)\in\X \implies (\mu_1,A_1)\in\X.}

The goal of this section is to prove that for $0<\mu\ll 1$ and $A\in\R$, 
\EQ{
 (\mu,A)\in\X \iff A<\sE_1(\mu).}
$\implies$ is trivial in the defocusing case, obvious by the excited states $\Soli_1$ in the focusing case. So the question is the $\follows$ part. 

For small $H^1$ data, we have the scattering to $\Soli_0$ by \cite{gnt}, together with a uniform bound on the Strichartz norms of $\x$ in terms of $\|u(0)\|_{H^1_x}$. In fact, Lemma \ref{lem:scatxi} implies that $H^{1/2}_x$ smallness is enough. In particular, using Lemma \ref{lem:gs} and interpolation, we deduce that $(\mu,A)\in\X$ for sufficiently small $A$ for each fixed $\mu$, and for sufficiently small $\mu$ for each fixed $A$. 
Hence $\X$ contains a neighborhood of both $\{\mu=0\}$ and $\{A=0\}$.

Suppose that there exists $(\mu_0,A_0)\in(0,\I)^2\setminus \X$ satisfying $A_0<\sE_1(\mu_0)$ and $\mu_0\ll 1$.  
Put
\EQ{ \label{def EM*}
 E_*:=\sup \{A<A_0 \mid (\mu_0,A)\in\X\}, \pq M_*:=\sup\{\mu<\mu_0 \mid (\mu,E_*)\in\X\}.} 
Then
\EQ{
 \pt 0<M_*\le \mu_0,\pq 0<E_*\le A_0<\sE_1(\mu_0)\le \sE_1(M_*),}
and $(M_*,E_*)$ is minimal on $\p\X$ in the sense that 
\EQ{
 (\mu_1,A_1)<(M_*,E_*)\ll (\mu_2,A_2) \implies (\mu_1,A_1)\in\X,\pq (\mu_2,A_2)\not\in\X.}
In particular, there is a sequence $(\R^2)^\N\ni(M,E)\to(M_*,E_*)$ and a sequence of solutions $u=\Phi[z]+R[z]\x\in\GS(M,E)$ such that 
\EQ{ \label{minimizing xn}
 M \le \mu_0+o(1),\pq E<\sE_1(M),\pq \|\x\|_{\ST(0,\I)}\to \I}
See \eqref{conve seq} for the notation of sequences without index. 
The mass-energy constraint together with Lemma \ref{lem:gs} implies that $u$ is bounded in $H^1_x$, so is $\x$, while $|z|\lec\mu_0\ll 1$. 
The linearized profile decomposition: Lemma \ref{lem:Lprof} yields 
\EQ{
 \pt \x[z,0]=\sum_{0\le j<J} \la^j + \ga^J,\pq  \la^j=\x[z,0][z,s^j]\II,}
for each $J<J^*$. Let 
\EQ{
 (z_\I^j,\x_\I^j):=\lim (z,\x)(t+s^j), \pq u_\I^j:=\Phi[z_\I^j]+R[z_\I^j]\x_\I^j}
be the weak limits, solving respectively \eqref{eq zxi} and \eqref{NLSP}. 
The weak convergence implies 
\EQ{ \label{limit ME}
 \bM(u_\I^j)\le M_*, \pq \bE(u_\I^j)\le E_*.}

Fix a finite $J\le J^*$ so large that we can use Theorem \ref{thm:NPD}. 
Since $\|\x\|_{\ST(0,\I)}\to\I$, the assumption of Theorem \ref{thm:NPD} must fail for $I':=[0,\I)^\N\ni s:=0$. 
Hence there exists $l<J$ such that $s^l\ge 0$ and $\|\x^l_\I\|_{\ST(0,\I)}=\I$. 
We may choose the minimal $l$ in the sense that $s^j-s^l\to\I$ for all $j\not=l$ satisfying $s^j\ge 0$ and $\|\x^j_\I\|_{\ST(0,\I)}=\I$. 

Then \eqref{limit ME} together with the minimality of $(M_*,E_*)$ implies that $u_\I^l$ is a minimal solution which does not scatter to $\Soli_0$ as $t\to\I$, 
\EQ{
 (M_*,E_*)=(\bM(u_\I^l),\bE(u_\I^l)),}
and so the convergence is strong in $H^1_x$ for $\x(t+s^l)\to\x_\I^l(t)$ and $u(t+s^l)\to u_\I^l(t)$. 
In particular, if $l=0$ then $u(0)\to u_\I^0(0)$ strongly in $H^1_x$. 

If $l>0$, then $s^l\to\I$ and the minimality of $l$ implies that for each $j\not=l$, either $s^j\to-\I$, $s^j-s^l\to\I$ or $\|\x^j_\I\|_{\ST(0,\I)}<\I$, thereby we can apply Theorem \ref{thm:NPD} to $I':=[0,s^l]$. 
Then by \eqref{backscat}, we have
\EQ{
 \pt M_*=\bM(u^l_\I)=\bM(\Phi[z^l_\I(-s^l)])+\bM(\la^l(0))+o(1),
 \pr E_*=\bE(u^l_\I)=\bE(\Phi[z^l_\I(-s^l)])+\bH^0(\la^l(0))+o(1),}
using that $R[z]-1$ is compact on $H^1$ and that the scattering $\x^l_\I$ is weakly vanishing in $H^1$ as $t\to-\I$. 
Then the smallness of the ground states implies 
\EQ{ \label{min ele ME}
 \bH^0(\la^l(0))\ge E_*-C\mu_0+o(1).}

The same argument as above works if the assumption $\|\x\|_{\ST(0,\I)}\to\I$ is replaced with $\|\x\|_{\ST(0,T)}\to\I$ for some sequence $T\to\I$. 
Similarly, if it is replaced with $\|\x\|_{\ST(T,0)}\to\I$ for some sequence $T\to-\I$, then the same argument works in the negative time direction.

Next we prove the precompactness of the orbit of a minimal solution. 
Henceforth, the index $n$ of sequences is made explicit in order to avoid confusion. 
Let $u=\Phi[z]+R[z]\x\in\GS(M_*,E_*)$ be a global solution satisfying 
\EQ{
 (\bM(u),\bE(u))=(M_*,E_*), \pq \|\x\|_{\ST(0,\I)}=\I.}
Then for any sequence $0<t_n\to\I$, the above argument applies to $u_n:=u(t+t_n)$ on $I_n:=(-t_n,\I)\to\R$, both with $I_n':=(-t_n,0]$ and with $I_n':=[0,\I)$, because $\|\x_n\|_{\ST(-t_n,0)}=\|\x\|_{\ST(0,t_n)}\to\I$ and $\|\x_n\|_{\ST(0,\I)}=\|\x\|_{\ST(t_n,\I)}=\I$.  

If $u_\I^0$ becomes the minimal element in either case, then $u_n(0)=u(t_n)$ is strongly convergent. 
Otherwise, we get \eqref{min ele ME} for some $l=l_0>0$ in $I_n'=(-t_n,0]$ and for another $l=l_1>0$ in $I_n'=[0,\I)$, while $u^0_\I$ is scattering to $\Soli_0$ as $t\to\pm\I$. Then $\bE(u^0_\I)$ can be negative only by the soliton component, hence $\bE(u^0_\I)\gec -\mu_0$. 
Putting them into \eqref{ME decop} yields
\EQ{
  \bE_* \ge \bE(u^0_\I)+\bH^0(\la^{l_0}(0))+\bH^0(\la^{l_1}(0))+o(1)
 \ge 2E_*-C\mu_0+o(1),}
so $E_*\lec\mu_0$, contradicting the small data scattering if $\mu_0$ is small enough. 
Hence $u(t_n)$ converges strongly in $H^1_x$ after extracting a subsequence. In other words, 
\EQ{
 \{u(t) \mid t\ge 0\} \subset H^1_r(\R^3)}
is precompact for such a minimal solution $u$. 

By Lemma \ref{lem:gs}, we have a lower bound $\bK_2(u(t))\ge\ka_*:= \ka(M_*,\sE_1(M_*)-E_*)>0$. The precompactness implies that there is $R\gg 1$ such that 
\EQ{
 \sup_{t>0}\int_{|x|>R}[|\na u|^2+|u|^2+|u|^4]dx \ll \ka_*.}
Then the saturated virial identity as in Section \ref{ss:bup} implies 
\EQ{
 \p_t\LR{R f_Ru|iu_r}> \ka_* > 0,}
for all $t>0$, which obviously contradicts the boundedness of $\LR{R f_Ru|iu_r}$ in $t>0$. 
This concludes the scattering to the ground states $\Soli_0$ in (ii) of Lemma \ref{lem:gs}, and so the proof of Theorem \ref{main}.

\appendix

\section{Decay of the potential} \label{app:dec V}
Here we prove that $V,x\cdot\na V,(x\cdot\na)^2V\in L^2+L^\I_0$ for the radial function $V(x)=V(|x|)$ implies $|V(r)|+|rV_r(r)|\to 0$ as $r\to\I$. 

Decompose $V=V_2+V_\I$ such that 
\EQ{
 \|V\|_{L^2+L^\I}\sim\|V_2\|_{L^2}+\|V_\I\|_{L^\I},\pq \lim_{R\to\I}\|V_\I\|_{L^\I(|x|>R)}=0.}
For any $\e>0$, there is $R>0$ such that $\|V_\I\|_{L^\I(|x|>R)}<\e$. Let $\chi:\R\to\R$ be a smooth function satisfying $\chi(t)=t$ for $|t|\ge 3$, $|\chi(t)|\le|t|$ and $0\le\chi'(t)\le 10$ for all $t$, and $\chi(t)=0$ for $|t|\le 2$. Put 
\EQ{
 V_{(\e)}:=\e\chi(V/\e),}
Then $V_{(\e)}(r)\not=0$ implies $|V(r)|>2\e$ and so $|V_{(\e)}(r)|\le|V(r)|\sim|V_2(r)|$ for $r>R$. Hence for $r>R$, 
\EQ{
 |V_{(\e)}(r)|^2 \pt\le 2\int_r^\I|V_{(\e)}(r)|\cdot|r\p_r V_{(\e)}(r)|\frac{r^2dr}{r^3}
 \pr\lec \|V_{(\e)}\|_{L^2(|x|>R)}\||x|^{-3}\|_{L^2\cap L^\I(|x|>R)}\|r\p_r V_{(\e)}\|_{(L^2+L^\I)(|x|>R)}
 \pr\lec \|V_2\|_{L^2(|x|>R)}R^{-3/2}\|r\p_r V\|_{L^2+L^\I}\to 0}
as $R\to\I$. Since $|V|\le |V_{(\e)}|+3\e$, we deduce that $V(x)\to 0$ as $|x|\to\I$. The same argument implies that $r\p_rV(x)\to 0$. 

\pagebreak
\section{Table of Notation}
Besides the following list, see \eqref{conve seq}--\eqref{conve lim} for notation of sequences without index. 
{\small
\begin{longtable}{l|l|l}
 \hline 
 symbols & description & defined in \\
 \hline
 $\sg$ & sign of nonlinearity & \eqref{NLSP} \\
 $H$, $V$ & Schr\"odinger operator and the potential & \eqref{NLSP}, Section \ref{ss:asm V} \\  
 $e_0$, $\phi_0$ & unique eigenvalue and ground state of $H$ & \eqref{def phi0}\\
 $\pp$ & $L^p_x$-exponent where the wave operator is bounded & Section \ref{ss:asm V} (iv) \\
 $Q$ & ground state without $V$ & \eqref{eq Q} \\
 $\bE,\bM,\bK_2$ & energy, mass and virial & \eqref{def EM}, \eqref{def K2} \\
 $\ml{\cdot}$, $\bG$, $\bH$, $\bI$ & some functionals & \eqref{def funct}, \eqref{def I} \\
 $\bE^0$, $\bH^0$, $\bK_2^0$ & functionals without $V$ & \eqref{def H0} \\ 
 $\Soli$, $\Soli_j$, $\sE_j$ & all solitons, $j$-th bound states and their energy & \eqref{def Soli}, \eqref{def Solij}, \eqref{def Ej} \\
 $(\Phi,\Om)$ & coordinates of ground states & Lemma \ref{lem:Phi} \\
 $z_b$, $D_b$, $\mu_b$ & size of the above coordinates & Lemma \ref{lem:Phi}, \eqref{def mub} \\
 $\mu_d$, $z_d$ & size of parameterized ground states & \eqref{small ground}\\
 $\mu_p$, $D_p$ & size of ground state projection & Lemma \ref{lem:decop2Phi} \\ $\mu_e$ & small mass to characterize $\sE_1$ & Proposition \ref{Soli foc}\\
 $L^p_x$, $H^s_p$ ($H^s=H^s_2$), $B^s_{p,q}$ & Lebesgue, Sobolev and Besov spaces on $\R^3$ & Section \ref{ss:nota}, \eqref{def H1r}\\
 $H^1_r$, $H^1[\mu]$ & radial Sobolev and its subset with small mass & \eqref{def H1mu}\\
 $L^p_tX(I)$ & B-space valued $L^p$ space on interval & Section \ref{ss:nota} \\ 
 $(\cdot|\cdot)$, $\LR{\cdot|\cdot}$ & inner products on $L^2(\R^3)$ & \eqref{def inner} \\
 $\cS^t_p$, $\cS'_p$ & $L^p$-preserving scaling and its generator & \eqref{def Stp}-\eqref{def S'} \\ 
 $\ka$ & lower bound on $|\bK_2|$ & Proposition \ref{Soli foc}\\

 $\cH_c[z]$, $R[z]$ & normal subspace of $\Soli_0$ and its coordinate & \eqref{def decop}, \eqref{def Rz} \\
 $B[z]$, $\ti N(z,\x)$ & terms in the equation around $\Soli_0$ & \eqref{def MN}, \eqref{def Bz}, \eqref{def tiN}\\
 $\cdot\{s\}$ & functions defined around $s$ & \eqref{def germ} \\
 $u[z,s]$ & solution of the linearized equation & \eqref{def propa}\\
 $\D f[z,s]$ & Duhamel form of the linearized equation & \eqref{def Duh}\\
 $u[z,s]_>$ & solution with nonlinearity turnoff & \eqref{def turnoff}\\
 $\Stz^s$, $\Stz^{*s}$, $\ST$ & Strichartz norms & \eqref{def Stz}\\
 $[z;T_0,T_1;T_2]$ & semi-norm to measure deviation from $u[z,T_0]$ & \eqref{def wavenorm} \\
 $\sI$ & totality of intervals & \eqref{def sI} \\
 $\SBC$ & uniformly small, bounded and continuous functions & \eqref{def SBC}--\eqref{SBC} \\
 $u[z,s]_{(\I)}$ & linearized solution with limit initial data & \eqref{def limlin}\\
 $\bH_\te$ & fractional power of $HP_c$ & \eqref{def Hte} \\
 $J^*$, $s^j$ & the number and centers of profiles & Lemma \ref{lem:Lprof}\\
 $\la^j$, $\la^I$, $\ga^J$ & linear profiles and their sum, and remainder & Lemma \ref{lem:Lprof}, \eqref{def laI}\\
 $\diff{\cdots_\pa}$ & difference & \eqref{conve diff}\\
 $\La^j$, $\Ga^J$ & nonlinear profiles and remainder & \eqref{def La}, \eqref{eq Ga}\\
 $s_\pm^j$ & times around profiles for decomposition & \eqref{def sjpm}\\
 $o_\ta$ & vanishing terms as $n\to\I$ and $\ta\to\I$ & \eqref{def ota}\\
 $\GS(\mu,A)$ & global solutions below some mass and energy & \eqref{def GS}\\
 $\X$ & mass-energy region with uniform Strichartz bound &\eqref{def STX}\\
 $(\cdot,\cdot)\le,\ll$ & product orders & \eqref{def orders}\\
 \hline
\end{longtable}
}


\begin{thebibliography}{1}
\bibitem{BG}
H.~Bahouri and P.~G\'erard, {\it  High frequency approximation of solutions to critical nonlinear wave equations.}  Amer.  J. Math. {\bf 121}  (1999),  no.~1, 131--175.
\bibitem{B}
M.~Beceanu, {\it Structure of wave operators for a scaling-critical class of potentials}, Amer. J. Math. {\bf 136} (2014), no.~2, 255--308. 
\bibitem{CM}
S.~Cuccagna and M.~Maeda, 
{\it On small energy stabilization in the NLS with a trapping potential}, 
Anal. PDE {\bf 8} (2015), no.~6, 1289--1349.
\bibitem{DHR}
T.~Duyckaerts, J.~Holmer and S.~Roudenko, {\em Scattering for the non-radial 3D cubic nonlinear Schr\"odinger equation}, Math. Res. Lett. {\bf 15} (2008), no.~6, 1233--1250. 
\bibitem{DKM}
T.~Duyckaerts, C.~Kenig and F.~Merle, 
{\it Classification of radial solutions of the focusing, energy-critical wave equation,} Camb. J. Math. {\bf 1} (2013) no.~1, 75--144. 
\bibitem{DM} 
T.~Duyckaerts and F.~Merle, {\em Dynamic of threshold solutions for energy-critical NLS.}  Geom. Funct. Anal. {\bf 18}  (2009),  no.~6, 1787--1840. 
\bibitem{DR}
T.~Duykaerts and S.~Roudenko, {\it Threshold solutions for the focusing 3D cubic Schr\"odinger equation},  Rev. Mat. Iberoam. {\bf 26} (2010), no.~1, 1--56.
\bibitem{F}
D.~Foschi, {\it Inhomogeneous Strichartz estimates}, J. Hyperbolic. Differ. Equ. {\bf 1} (2005), no.~1, 1--24. 
\bibitem{GS}
Z.~Gang and I.~M.~Sigal, {\it Relaxation of solitons in nonlinear Schr\"odinger equations with potential}, Adv. Math. {\bf 216} (2007), no.~2, 443--490.
\bibitem{gnt}
S.~Gustafson, K.~Nakanishi and T.~Tsai, {\it Asymptotic stability and completeness in the energy space for nonlinear Schr\"odinger equations with small solitary waves}, Int. Math. Res. Not. (2004) no. 66, 3559--3584.
\bibitem{HR}
J.~Holmer and S.~Roudenko, {\em A sharp condition for scattering of the radial 3D cubic nonlinear Schr\"odinger equation},  Comm. Math. Phys.  {\bf 282}  (2008),  no.~2, 435--467. 
\bibitem{K}
T.~Kato, {\it An $L^{q,r}$-theory for nonlinear Schr\"odinger equations}, Spectral and scattering theory and applications, Adv. Stud. Pure Math., vol.~23, Math. Soc. Japan, Tokyo, 1994, pp. 223--238.
\bibitem{KM}
C.~Kenig and F.~Merle, {\it Global well-posedness, scattering, and blow-up for the energy-critical focusing nonlinear Schr\"odinger equation in the radial case}, Invent. Math. {\bf 166} (2006), no.~3, pp.~645--675.
\bibitem{M1}
T.~Mizumachi, {\it Asymptotic stability of small solitons for 2D nonlinear Schr\"odinger equations with potential}, J. Math. Kyoto Univ., {\bf 47} (2007), 599-620. 
\bibitem{M2}
T.~Mizumachi, {\it Asymptotic stability of small solitons for 1D nonlinear Schr\"odinger equations with potential}, J. Math. Kyoto Univ., {\bf 48} (2008), 471-697. 
\bibitem{CNLKG}
K.~Nakanishi, {\em Scattering theory for the nonlinear Klein-Gordon equation with Sobolev critical power.} Internat.~Math.~Res.~Notices {\bf 1999}, no.~1, 31--60.
\bibitem{NPT}
K.~Nakanishi, T.~V.~Phan, and T.-P.~Tsai, {\it Small solutions of nonlinear Schr\"odinger equations near first excited states,} J. Funct. Analysis {\bf 263} (2012), 703--781.
\bibitem{NSK}
K.~Nakanishi and W.~Schlag, {\it Global dynamics above the ground state energy for the focusing nonlinear Klein-Gordon equation}, J. Diff. Equations {\bf 250} (2011), 2299--2333. 
\bibitem{NSS}
K.~Nakanishi and W.~Schlag, {\it  Global dynamics above the ground state energy  for the cubic NLS equation in 3D}, Calc. Var. and PDE, {\bf 44} (2012), no.~1-2, 1--45. 
\bibitem{OT}
T.~Ogawa, and Y.~Tsutsumi, 
{\it Blow-up of $H^1$ solution for the nonlinear Schr\"odinger equation}, J. Diff. Eq. {\bf 92} (1991), 317--330.
\bibitem{PS}
L.~E.~Payne and D.~H.~Sattinger, {\it Saddle points and instability of nonlinear hyperbolic equations.} Israel J. Math. {\bf 22} (1975), no.~3-4, 273--303.
\bibitem{SW}
A.~Soffer and M.~I.~Weinstein, {\it Selection of the ground state for nonlinear Schr\"odinger equations.} Rev. Math. Phys. {\bf 16} (2004), no.~8, 977--1071.
\bibitem{TS}
T.-P.~Tsai, {\it Asymptotic dynamics of nonlinear Schr\"odinger equations with many bounds states,} J. Diff. Equations {\bf 192} (2003) 225--282. 
\bibitem{TY1}
T.-P.~Tsai and H.-T.~Yau, {\it Asymptotic dynamics of nonlinear Schr\"odinger equations: resonance dominated and radiation dominated solutions,} Comm. Pure Appl. Math. {\bf 55} (2002), 153--216.
\bibitem{TY2} 
T.-P.~Tsai and H.-T.~Yau, {\it Relaxation of excited states in nonlinear Schr\"odinger equations,} Int. Math. Res. Not. {\bf 31} (2002), 1629--1673. 
\bibitem{TY3} 
T.-P.~Tsai and H.-T.~Yau, {\it Stable directions for excited states of nonlinear Schr\"odinger equations,} Comm. Partial Differential Equations {\bf 27} (2002), 2363--2402.
\bibitem{TY4} 
T.-P.~Tsai and H.-T.~Yau, {\it Classification of asymptotic profiles for nonlinear Schr\"odinger equations with small initial data,} Adv. Theor. Math. Phys. {\bf 6} (2002), 107--139.
\bibitem{V}
M.~C. Vilela. {\it Inhomogeneous Strichartz estimates for the Schr\"odinger equation.} Trans. Amer. Math. Soc. {\bf 359} (2007), 2123--2136.
\bibitem{Y}
K.~Yajima, {\it The $W^{k,p}$-continuity of wave operators for Schr\"odinger operators,} J. Math. Soc. Japan {\bf 47} (1995), 551--581.
\end{thebibliography}
\end{document}